\pgfplotsset{compat=1.15}
\newcommand\m[1]{\begin{bmatrix}#1\end{bmatrix}} 
\newcommand\sm[1]{\begin{bsmallmatrix}#1\end{bsmallmatrix}}
\newcommand{\Z}{\mathbb{Z}}
\newcommand{\Q}{\mathbb{Q}}
\newcommand{\F}{\mathbb{F}}
\newcommand{\PP}{\mathbb{P}}
\newcommand{\GL}{\textnormal{GL}}
\DeclareMathOperator{\Gal}{Gal}
\DeclareMathOperator{\Aut}{Aut}
\newtheorem{theorem}{Theorem}[]
\newtheorem{lemma}[theorem]{Lemma}
\newtheorem{proposition}[theorem]{Proposition}
\newtheorem{corollary}[theorem]{Corollary}
\theoremstyle{definition}
\newtheorem{definition}[theorem]{Definition}
\theoremstyle{remark}
\newtheorem{example}[theorem]{Example}
\newtheorem{remark}[theorem]{Remark}
\Crefname{theorem}{Theorem}{Theorems}
\Crefname{lemma}{Lemma}{Lemmas}
\Crefname{proposition}{Proposition}{Propositions}
\Crefname{corollary}{Corollary}{Corollaries}
\Crefname{conjecture}{Conjecture}{Conjectures}
\Crefname{definition}{Definition}{Definitions}
\Crefname{example}{Example}{Examples}
\Crefname{remark}{Remark}{Remarks}
\crefname{theorem}{theorem}{theorems}
\crefname{lemma}{lemma}{lemmas}
\crefname{proposition}{proposition}{propositions}
\crefname{corollary}{corollary}{corollaries}
\crefname{conjecture}{conjecture}{conjectures}
\crefname{definition}{definition}{definitions}
\crefname{example}{example}{examples}
\crefname{remark}{remark}{remarks}
\numberwithin{theorem}{section}
\begin{document}
\title{Number of $K$-rational points with given $j$-invariant on modular curves} 
\author[Initial Surname]{Ivan Novak}
\date{\today}
\address{Ivan Novak, University of Zagreb, Faculty of Science, Department of Mathematics, Bijeni\v{c}ka Cesta 30, 10000 Zagreb, Croatia}
\email{ivan.novak@math.hr}
\thanks{The author was financed by the Croatian
Science Foundation under the project no. IP-2022-10-5008 and by the project “Implementation of cutting-edge research and its application as part of the Scientific Center of Excellence for Quantum and Complex Systems, and Representations of Lie Algebras“, Grant No. PK.1.1.10.0004, co-financed by the European Union through the European Regional Development Fund - Competitiveness and Cohesion Programme 2021-2027.}

\maketitle

\let\thefootnote\relax

\begin{abstract}
In this article, we study how to compute the number of $K$-rational points with a given $j$-invariant on an arbitrary modular curve.  As an application, for each positive integer $n$, we determine the list of possible numbers of cyclic $n$-isogenies an elliptic curve over some number field can admit. Similarly, for an odd prime power $p^k$, we calculate the possible values for the number of points above some $j$-invariant on Cartan modular curves $X_{\mathrm s}(p^k)$, $X_{\mathrm{ns}}(p^k)$ and their normalizers. 

Combining known results about images of Galois representations of CM elliptic curves with our work, we also devise a simple algorithm to determine the number of rational CM points on any modular curve.

\end{abstract} 

\bigskip

\section{Introduction}

Let $H$ be a subgroup of $\GL_2(\Z/N\Z)$ and let $X_H$ be the corresponding modular curve. The aim of this article is to explore the possible cardinalities of the sets $$\{ x \in X_H(K) \mid j(x)=j_0\},$$ where $K$ is any number field and $j_0$ is any element of $K$. In our work, we do not attempt to pin down the number fields or elliptic curves for which each cardinality is achieved. In that sense, our work is purely group-theoretic.

In particular, setting $X_H$ to be the modular curve $X_0(n)$, we determine the list of possible values of the number of $n$-isogenies of an elliptic curve over a number field. Recall that an $n$-isogeny is a cyclic isogeny of degree $n$. 

Isogenies of elliptic curves over number fields are an interesting and well-studied topic. There is a lot of interest in classifying possible degrees of isogenies over number fields. Mazur \cite{Mazur} and Kenku \cite{kenku39, kenku169, kenku65, Kenku} have determined all possible degrees of cyclic isogenies of elliptic curves defined over $\Q$. This amounts to finding all positive integers $n$ for which there exists a non-cuspidal rational point on the modular curve $X_0(n)$. A generalization of this problem is the problem of classifying all points of some degree $d$ on $X_0(n)$. For $d>1$, this is much more difficult than for $d=1$. There is no complete classification of all degree $d$ points on all $X_0(n)$ for no value of $d>1$.  However, there has been a lot of recent progress in the case $d=2$. One of the hurdles which makes the problem of finding all quadratic points on $X_0(n)$ difficult is the fact that for infinitely many $n$ there exist quadratic points on $X_0(n)$ corresponding to elliptic curves with complex multiplication (CM). This complicates the problem, as it is usually easier to prove that a curve has no points than to prove that a non-empty list of points which were found is complete. When determining quadratic points on $X_0(n)$, it is often useful to count the points on $X_0(n)$ with a given CM $j$-invariant, or the total number of quadratic CM points. For example, in \cite{nikola}, Adžaga, Keller, Michaud-Jacobs, Najman, Ozman and Vukorepa developed various techniques for finding quadratic points on $X_0(n)$. In some cases, it is neccessary to count the number of CM points. The methods they use to count the CM points are often computationally difficult.

One may also fix a $j$-invariant $j_0$ and classify all possible degrees of points on $X_0(n)$ whose $j$-invariant equals $j_0$. This question was explored by Terao in \cite{kenji} for modular curves $X_0(n)$ and $X_1(n)$, and solved in the case of rational $j$-invariants. Similarly, in \cite{leastcmdeg}, Clark, Genao, Pollack and Saia consider the problem of determining the least degree of a CM point on modular curves $X_0(n), X_1(n)$ and $X_1(m,n)$.  

In this paper, we discuss questions of a similar type. As mentioned, we determine all possible values of the number of cyclic $n$-isogenies of an elliptic curve $E$ defined over some number field $K$. The case when $n$ is prime is classical. If a linear affine transformation of the projective line $\PP^1(\F_p)$ fixes three points (or even two points for $p=2$), it automatically fixes all $p+1$ points. The action of $\GL_2(\Z/p\Z)$ on subgroups of $(\Z/p\Z)^2$ of order $p$ is equivalent to the action of $\PP\GL_2(\F_p)$ on $\PP^1(\F_p)$. By considering the image of the Galois representation $\rho_{E,n}(G_K)$ of an elliptic curve as a subgroup of $\GL_2(\Z/p\Z)$, it follows that if an elliptic curve has three isogenies of degree $p$ defined over some field, it neccessarily has all $p+1$ isogenies of degree $p$ defined over that field. However, if one replaces $p$ by a composite integer $n$, this principle does not hold anymore. 

\begin{proposition}
    Let $p$ be an odd prime and $k$ a positive integer. Let $E$ be an elliptic curve over a number field $K$. The number of $p^k$-isogenies of $E$ which are defined over $K$ belongs to the set $$\{0\} \cup \{p^j \mid 0\leq j <k\}\cup \{2p^j \mid 0\leq j <k\}\cup \{p^k+p^{k-1}\}.$$ Furthermore, each of the numbers from the set is achieved for some number field $K$ and some elliptic curve $E/K$.  
\end{proposition}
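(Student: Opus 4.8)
The plan is to translate the statement into a purely group-theoretic fixed-point count and then induct on $k$. A cyclic $p^k$-isogeny with source $E$ corresponds to a $G_K$-stable cyclic subgroup of order $p^k$ in $E[p^k]\cong(\Z/p^k\Z)^2$, and these cyclic subgroups are exactly the points of $\PP^1(\Z/p^k\Z)$. Writing $H=\rho_{E,p^k}(G_K)\le\GL_2(\Z/p^k\Z)$ for the image of the mod $p^k$ Galois representation, the number of $p^k$-isogenies defined over $K$ equals the number of $H$-fixed points of the natural action of $\GL_2(\Z/p^k\Z)$ on $\PP^1(\Z/p^k\Z)$. So I would first prove that, as $H$ ranges over all subgroups of $\GL_2(\Z/p^k\Z)$, the number of fixed points on $\PP^1(\Z/p^k\Z)$ ranges over exactly the stated set, which I call $V_k$; realizability over number fields is handled at the end.

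For the group theory I would reduce modulo $p$ and let $\overline H\le\GL_2(\F_p)$ be the image. Every $H$-fixed point of $\PP^1(\Z/p^k\Z)$ reduces to an $\overline H$-fixed point of $\PP^1(\F_p)$, and by the classical fact recalled in the introduction $\overline H$ fixes $0$, $1$, $2$, or all $p+1$ points. The crucial dichotomy is whether $\overline H$ contains a regular semisimple element (separable characteristic polynomial). If it does, such an element $g_0$ fixes at most two points of $\PP^1(\F_p)$, so $\overline H$ does too; moreover, above a fixed line $[1:t]$ the fixed-point equation for $g_0=\sm{a&b\\c&d}$ reads $bt^2+(a-d)t-c=0$ with $a-d$ a unit, so Hensel's lemma pins down a unique lift in each fiber. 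Hence in this case the count lies in $\{0,1,2\}$. If $\overline H$ contains no regular semisimple element, then every element has a repeated eigenvalue, so by Kolchin's theorem $\overline H$ is conjugate into $\{\sm{\lambda&\mu\\0&\lambda}\}$, and I would split into the scalar and the genuinely unipotent subcases.

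The heart of the argument is the scalar subcase, where I would set up the induction on $k$. Here $H$ lies in the preimage of the center, so after removing scalars (which act trivially on $\PP^1$) every element is $g=\lambda_g(I+pN_g)$ with $I+pN_g$ in the principal congruence subgroup $\Gamma_1=\ker(\GL_2(\Z/p^k\Z)\to\GL_2(\F_p))$. The isomorphism $\Gamma_1\xrightarrow{\sim}(M_2(\Z/p^{k-1}\Z),+)$, $I+pN\mapsto N\bmod p^{k-1}$, shows that $\{N_g\bmod p^{k-1}\}$ is an additive subgroup $M$, and a direct computation gives that $[v]\in\PP^1(\Z/p^k\Z)$ is $H$-fixed if and only if $v\bmod p^{k-1}$ is a common eigenvector of $M$. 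Since each line of $\PP^1(\Z/p^{k-1}\Z)$ has exactly $p$ lifts, all fixed or none, the count equals $p$ times the number of common eigenlines of $M$; passing to the unital algebra generated by $M$ and then to its unit group identifies this with a fixed-point count for a subgroup of $\GL_2(\Z/p^{k-1}\Z)$, hence with an element of $V_{k-1}$. This yields the recursion $V_k=\{0,1,2\}\cup pV_{k-1}$, which together with the base case $V_1=\{0,1,2,p+1\}$ produces precisely the set in the statement; the remaining unipotent subcase, where $\overline H$ fixes a single line and contributes the pure powers $p^j$, I would settle by a direct valuation analysis of the equation $t(bt+a-d)=0$ on the fiber. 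Finally, for realizability I would choose $E/\Q$ with surjective mod $p^k$ image and, for a subgroup $H$ attaining a given value, take $K$ to be the fixed field of $\rho_{E,p^k}^{-1}(H)$, so that $\rho_{E,p^k}(G_K)=H$ exactly. The main obstacle I anticipate is the fiber-by-fiber lifting analysis in the non-regular cases: both organizing the scalar case as the clean recursion above and carrying out the valuation bookkeeping in the unipotent case so that the counts come out as exact powers of $p$ in the range $0\le j<k$.
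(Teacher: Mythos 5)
Your overall strategy is genuinely different from the paper's: the paper fixes $R\subseteq B_0(p^k)$, writes the single equation $w(z(a-c)-bw)=0$, splits $\PP^1(\Z/p^k\Z)$ into two affine charts, and controls everything through the invariants $\alpha=\min v_p(c-a)$, $\beta=\min v_p(b)$; you instead propose a mod-$p$ trichotomy (regular semisimple / scalar / unipotent) and an induction on $k$ via the recursion $V_k=\{0,1,2\}\cup pV_{k-1}$. The recursion does reproduce the stated set, and the regular semisimple and scalar cases are essentially sound. But there are two concrete problems. First, the map $I+pN\mapsto N\bmod p^{k-1}$ is \emph{not} an isomorphism $\Gamma_1\to(M_2(\Z/p^{k-1}\Z),+)$ for $k\ge 3$: one has $(I+pN)(I+pM)=I+p(N+M+pNM)$, and indeed $\Gamma_1$ is nonabelian once $p^2\ne 0$, so $\{N_g\bmod p^{k-1}\}$ need not be an additive subgroup. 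This is repairable — the common eigenvectors of an arbitrary \emph{subset} of $M_2(\Z/p^{k-1}\Z)$ coincide with those of the unital algebra it generates, and (for $p$ odd, via $N=(N+cI)-cI$ with $\det(N+cI)$ a unit and Cayley--Hamilton) with those of that algebra's unit group — but as written the step rests on a false statement. Second, and more seriously, your predicted outcome of the unipotent subcase is wrong: that case does not contribute only pure powers $p^j$. For example, $R=\left\langle\sm{1&1\\0&1+p}\right\rangle\le\GL_2(\Z/p^3\Z)$ has all elements unipotent mod $p$, yet fixes exactly $2p$ lines (the fiber equation $py=y^2$ with $p\mid y$ has $2p$ solutions). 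The values that actually occur, $p^j$ and $2p^j$, still lie in $\{0,1,2\}\cup pV_{k-1}$, so your final answer survives, but the ``direct valuation analysis'' you defer is precisely where the paper's real work happens (the distinction between proper and improper solutions and the interplay of $\alpha$ and $\beta$), and you have deferred it with an incorrect guess about what it yields. Until that case is carried out correctly, the containment $V_k\subseteq\{0,1,2\}\cup pV_{k-1}$ is not established.

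Two smaller points. For a regular semisimple $\sm{a&b\\c&d}$ it is not $a-d$ that is a unit but the discriminant $(a-d)^2+4bc$ of the fixed-point polynomial $bt^2+(a-d)t-c$; that is what makes the roots simple and Hensel applicable (and you also need to treat the chart $[s:1]$ with $s\equiv 0\pmod p$ alongside $[1:t]$). For the realizability direction, your plan only says ``for a subgroup $H$ attaining a given value''; to get all of $pV_{k-1}$ you still need to exhibit, for each $R'\le\GL_2(\Z/p^{k-1}\Z)$ attaining $m\in V_{k-1}$, a subgroup of $\GL_2(\Z/p^k\Z)$ in the scalar case attaining $pm$ (e.g.\ the group generated by $I+p\tilde g$ over lifts $\tilde g$ of generators of $R'$ works, since fixedness of a line depends only on the generators); this should be spelled out, after which the passage to a number field via the fixed field of $\rho_{E,p^k}^{-1}(H)$ matches the paper's argument.
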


\begin{proposition}
     Let $k>1$ be a positive integer. Let $E$ be an elliptic curve over a number field $K$. The number of $2^k$-isogenies of $E$ which are defined over $K$ belongs to the set $$\{0\} \cup \{2^m \mid m=1,\ldots, k\}\cup \{2^k+2^{k-1}\}.$$ Furthermore, each of the numbers from the set is achieved for some number field $K$ and some elliptic curve $E/K$.  
\end{proposition}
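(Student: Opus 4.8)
\noindent\emph{Reduction to a fixed-point count.} The plan is to first turn the statement into a count of fixed points on a projective line. As in the general setup, the $K$-rational $2^k$-isogenies of $E$ correspond to the $G_K$-stable cyclic subgroups of order $2^k$ in $E[2^k]$; since an element of $(\Z/2^k\Z)^2$ has order $2^k$ exactly when one of its coordinates is a unit, these subgroups are precisely the free rank-one direct summands, i.e.\ the points of $\PP^1(\Z/2^k\Z)$. Writing $H := \rho_{E,2^k}(G_K)$, the number we want is therefore the number of $H$-fixed points of $\PP^1(\Z/2^k\Z)$, a set of size $2^k+2^{k-1}$. The problem splits into containment (this number lies in the stated set for every subgroup $H$) and achievability (each value occurs). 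Reducing modulo $2$ and using $\GL_2(\F_2)\cong S_3$ acting as the full symmetric group on the three points of $\PP^1(\F_2)$, the reduction $\bar H$ fixes $0$, $1$, or $3$ of these points; if it fixes none, $H$ has no fixed point and the count is $0$.

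\noindent\emph{The split (diagonalizable) case.} Suppose $H$ fixes two lines whose reductions are distinct points of $\PP^1(\F_2)$. Generators $v_1,v_2$ of these lines then reduce to an $\F_2$-basis, so by Nakayama they form a basis of $(\Z/2^k\Z)^2$, and in this basis $H$ is diagonal. A direct computation then shows that if the ideal generated by $\{a-d : \operatorname{diag}(a,d)\in H\}$ equals $2^\ell(\Z/2^k\Z)$, the fixed locus consists of $2^\ell$ points in each of the two residue classes, for a total of $2^{\ell+1}$ with $0\le\ell\le k-1$; and a nonscalar diagonal group fixes no line in the third residue class. If instead $H$ fixes lines in all three residue classes, the same computation forces $a=d$ throughout, i.e.\ $H$ is scalar, and then every point of $\PP^1(\Z/2^k\Z)$ is fixed, giving $2^k+2^{k-1}$. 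This already produces all values $2^1,\dots,2^k$ and the maximum.

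\noindent\emph{The non-split case --- the crux.} The remaining possibility is that every fixed line lies over a single point of $\PP^1(\F_2)$; after conjugation $H$ is upper triangular and the fixed points are the $[1:y]$ with $y$ even satisfying $(d-a)y\equiv by^2\pmod{2^k}$ for all $\left(\begin{smallmatrix}a&b\\0&d\end{smallmatrix}\right)\in H$. Here I would work through the tower $\PP^1(\Z/2^{i+1}\Z)\to\PP^1(\Z/2^{i}\Z)$, each fibre of which is a torsor under $\F_2$. Because $\GL_1(\F_2)$ is trivial and the diagonal entries $a,d$ are units, hence odd, so that $d-a$ is even, $H$ acts on each fibre through \emph{translations} only; consequently a fixed point of the base has either $0$ or $2$ fixed preimages, which already forces the count to be even. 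The main obstacle is to upgrade ``even'' to ``a power of $2$'' (ruling out, say, a count of $6$): I expect to do this by choosing bit-coordinates identifying the relevant fibre with $\F_2^{\,k-1}$, in which the obstruction to lifting linearizes --- the quadratic term $by^2$ becomes $\F_2$-affine because $\beta^2=\beta$ for a bit $\beta$ and all genuine cross terms acquire enough $2$-adic valuation to vanish --- so that the fixed locus is an $\F_2$-affine subspace and its cardinality is a power of $2$. Finally, $y=0$ is a fixed point at every level whose two preimages are always fixed, so the count is at least $2$; this last point is exactly where $k>1$ is used, since for $k=1$ one really can fix a single point.

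\noindent\emph{Achievability.} For the other direction I would exhibit a realizable $H$ for each value: the count $0$ from any $H$ whose mod-$2$ image contains a $3$-cycle (e.g.\ a generic $E/\Q$); each $2^m$ with $1\le m\le k$ from a diagonal image whose difference ideal is $2^{m-1}(\Z/2^k\Z)$; and the maximum $2^k+2^{k-1}$ from the trivial image $H=\{I\}$, realized by any $E$ over a field containing its $2^k$-division field. Passing from these group images to actual pairs $(E,K)$ uses the realizability result for mod-$N$ images over number fields. I expect the non-split linearization to be the only genuinely delicate step; the split case and the achievability constructions are essentially bookkeeping.
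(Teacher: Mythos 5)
Your reduction to counting $H$-fixed points of $\PP^1(\Z/2^k\Z)$, the three-way case division according to the mod-$2$ fibres, and the computation in the diagonalizable case all track the paper's argument, where the two residue classes appear as the ``type A'' solutions $(x:1)$ of $(c-a)x=b$ and the ``type B'' solutions $(1:y)$, $y$ even, of $(c-a)y=by^2$. But there are two genuine gaps. The first is in your crux step, and the mechanism you propose for it is false as stated: the map $y\mapsto by^2$ is \emph{not} $\F_2$-affine in bit coordinates. Writing $y=\sum_i\beta_i2^i$ produces cross terms $\beta_i\beta_j2^{i+j+1}$ that survive modulo $2^k$; for instance bit $2$ of $y^2\bmod 8$ is $(1+\beta_0)\beta_1$, genuinely quadratic in the bits. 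So ``the defining equations linearize, hence the fixed locus is an affine subspace'' does not go through. The conclusion you want (the count is a power of $2$, at least $2$) is true, but it must be extracted from the solution \emph{set} rather than the equation, by the valuation analysis the paper carries out: put $\alpha=\min v_2(d-a)$ (note $\alpha\ge1$ since $a,d$ are odd) and $\beta=\min v_2(b)$; then the solutions of the system are either exactly the subgroup $\{y: v_2(y)\ge \min(\alpha,\lceil (k-\beta)/2\rceil)\}$, of size $2^{\min(\alpha,\lfloor(k+\beta)/2\rfloor)}$, or that subgroup together with $2^\alpha$ further ``proper'' solutions forming a single congruence class at the fixed valuation $\alpha-\beta$, for a total of $2^{\alpha+1}$. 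This step cannot be skipped: as you yourself note, the torsor/tower argument only yields that the count is even, which does not exclude $6$.

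The second gap is in achievability of the value $2$. You propose a diagonal image whose difference ideal is $2^{0}(\Z/2^k\Z)$, but the diagonal entries of any element of $\GL_2(\Z/2^k\Z)$ are units, hence odd, so $a-d$ is always even and your $\ell$ is at least $1$; a nonscalar diagonal group therefore fixes at least $2^{2}$ points when $k\ge2$, and no split construction can give $2$. The count $2$ arises only in your non-split case: the paper realizes it with $R=B_0(2^k)$, the full upper-triangular group, for which the type A system $(c-a)x=b$ is insoluble (take $b$ odd, while $c-a$ is even) and the type B system has exactly the two solutions $y\in\{0,2^{k-1}\}$. With these two repairs --- the valuation count in place of the bit-linearization, and a non-split witness for the value $2$ --- your argument closes up and is essentially the paper's.
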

These results are proven in \Cref{sec3}. As we will see in \Cref{sec2}, if $K$ is a number field and $j_0\in K$, the number of $p^k$-isogenies defined over $K$ of an elliptic curve with $j$-invariant equal to $j_0$ is equal to the cardinality of the set $$\{ x \in X_0(p^k)(K) \mid j(x)=j_0\}.$$ 
The paper is organised in the following way. In \Cref{sec2}, we recall the moduli definition of modular curves $X_H$ (in fact, of $Y_H$) for any subgroup $H$ of $\GL_2(\Z/N\Z)$, write down explicitly the condition for a point on $X_H$ to be rational and discuss how to compute the number of rational points on $X_H$ above some $j$-invariant, given the image of the mod $N$ Galois representation. In \Cref{sec3}, we determine the possible number of $n$-isogenies of an elliptic curve over a number field. We also determine the possible number of $p^{k+1}$-isogenies that an elliptic curve with a fixed number of $p^k$-isogenies can have. In \Cref{sec4}, we determine the possible number of points above a $j$-invariant on Cartan modular curves $X_{\mathrm{s}}(p^k)$ and $X_{\mathrm{ns}}(p^k)$ and their normalisers, where $p^k$ is a power of an odd prime. In \Cref{sec5}, using known results on Galois representations of CM elliptic curves by Lozano-Robledo \cite{alvarocm}, we establish a simple algorithm for determining the number of rational CM points on an arbitrary modular curve.  A simple implementation of the algorithm is available on GitHub\cite{novak:cm-modular-curves-github}.\textsuperscript{1}\footnote{\textsuperscript{1}\url{https://github.com/inova3c/number-of-cm-points-on-modular-curves}}

There are several reasons why modular curves corresponding to Cartan subgroups and their normalizers are of interest. One reason is that Cartan subgroups and their normalizers appear as images of Galois representations of CM elliptic curves. This is explained in more detail in \Cref{sec5}. Another reason is Serre's uniformity conjecture. By Serre's open image theorem \cite{openimage}, for every elliptic curve $E/\Q$ there exists a constant $C_E>0$ such that the image of the mod $p$ Galois representation $\rho_{E,p}(G_\Q)$ is surjective for all primes $p>C_E$. Serre's uniformity conjecture asserts that $C_E$ does not depend on $E$.  In fact, it is widely believed that one can take $C_E=37$ for all $E$, i.e.\ that $\rho_{E,p}(G_\Q)$ is surjective for all $E/\Q$ and all $p>37$. While the uniformity conjecture remains a difficult open problem, there has been some partial progress. As of now, it has been proven (\cite{serrecheb, biluparent, bpr, fl}) that if $E/\Q$ is an elliptic curve and $p>37$ is a prime, then $\rho_{E,p}(G_\Q)$ is either the entire $\GL_2(\Z/p\Z)$ or conjugate to the normaliser of the non-split Cartan subgroup of $\GL_2(\Z/p\Z)$.  

\section*{Acknowledgements}

The author thanks Filip Najman, Matija Kazalicki and Enrique González Jiménez for helpful comments and corrections, to Maarten Derickx for suggesting that a result such as \Cref{ako 3 onda sve} should hold and to Jeremy Rouse for improvements to the algorithm in \Cref{sec5}.

\section{Number of $K$-rational points on a modular curve with a given $j$-invariant}\label{sec2}

We start by recalling general definitions regarding modular curves. Let $K$ be a number field and $j \in K$. Let $H \leq \GL_2(\Z/N\Z)$ be a subgroup. The non-cuspidal points on the modular curve $X_H$ are classes $[(E, \alpha)]$, where $E$ is an elliptic curve and $\alpha$ is an $H$-level structure on $E$.  More precisely, define the following equivalence relation on the set of all ordered bases for $E[N]$. We say $(P, Q)$ and $(P', Q')$ are equivalent mod $H$ if there is some $\sm{a & b \\ c & d} \in H$ such that $P'=aP+cQ$ and $Q'=bP+dQ$. An $H$-level structure $\alpha$ is an equivalence class with respect to this relation.

Furthermore, we say $(E, \alpha)$ and $(E', \alpha')$ are isomorphic if there is an isomorphism of elliptic curves  $\phi: E \to E'$ defined over $\overline \Q$ such that if $(P, Q)$ is a representative for the level structure $\alpha$, then $(\phi(P), \phi(Q))$ is a representative for the level structure $\alpha'$. A point $x$ on $X_H$ is an ordered pair $(E, \alpha)$, up to the mentioned isomorphism.

A point $x$ on $X_H(\overline \Q)$ is $K$-rational if $\sigma(x)=x$ for all $\sigma \in G_K$, where we define $\sigma([(E, \alpha)])$ to be $[(E^\sigma, \alpha^\sigma)]$, where if $\alpha$ is $(P,Q) \pmod{H}$, then $\alpha^\sigma$ is $(P^\sigma, Q^\sigma) \pmod{H}$. This means that $E$ and $E^\sigma$ are isomorphic over the algebraic closure $\overline K$, and the isomorphism sends $(P,Q) \pmod{H}$ to $(P^\sigma, Q^\sigma) \pmod H$. Since isomorphic elliptic curves have the same $j$-invariant, we have $j(E)^\sigma=j(E)$, hence $j(E)\in K$. 

This allows us to pick $E'$ isomorphic to $E$ such that $E'$ is defined over $K$, since all elliptic curves with the same $j$-invariant are isomorphic over the algebraic closure. Thus, it is not a loss of generality to assume that $E$ is defined over $K$ and $E=E^\sigma$. This means that the isomorphism is in fact an automorphism $\phi: E \to E$, and a rational point on $X_H$ can be represented as $(E, (P, Q) \pmod H)$, where $$(P^\sigma, Q^\sigma) \pmod H=(\phi(P), \phi(Q)) \pmod H.$$
In other words, for every $\sigma \in G_K$, there exists an automorphism $\phi$ of $E$ such that $(P^\sigma, Q^\sigma)$ equals $h(\phi(P),\phi(Q))$ for some $h \in H$. 

Denote by $\rho_{E,N}$ the mod $N$ Galois representation of an elliptic curve $E$. The images of $\rho_{E,N}(G_K)$ and $\Aut(E)$ inside $\GL_2(\Z/N\Z)$ depend on the choice of basis for $E[N]$. Let $R_{(P,Q)}$ and $A_{(P,Q)}$ respectively denote the images inside $\GL_2(\Z/N\Z)$ of $\rho_{E,N}(G_K)$ and $\Aut(E)$ with respect to the basis $(P,Q)$ for $E[N]$. The point $(E, (P,Q) \pmod H)$ is $K$-rational if and only for all $r \in R_{(P,Q)}$ there exists some $h\in H$ and $a \in A_{(P,Q)}$ such that $r=ha$. To determine the number of $K$-rational points on $X_H$ with a fixed $j$-invariant, we need to take any $E/K$ with that $j$-invariant and count the number of bases $(P,Q)$ such that $R_{(P,Q)}\subseteq H\cdot A_{(P,Q)}$, up to the action of $H$ and up to the action of $\Aut(E)$. 

\begin{lemma}\label{broj baza}
    Let $E/K$ be an elliptic curve and $(P_0,Q_0)$ a fixed basis for $E[N]$. Denote by $A$ the image of the action of $\Aut(E)$ on $E[N]$, written in basis $(P_0,Q_0)$. Let $(P,Q)=g(P_0, Q_0)$ be another basis for $E[N]$, where $g \in \GL_2(\Z/N\Z)$. The number of bases $(P', Q')$ such that $(E, (P,Q) \pmod H)$  and $(E, (P', Q') \pmod H)$ define the same point  on $X_H$ is equal to the cardinality of the double coset $HgA$.
\end{lemma}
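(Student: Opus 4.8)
The plan is to set up a bijective dictionary between ordered bases of $E[N]$ and elements of $\GL_2(\Z/N\Z)$, and then to re-express both equivalence notions in play---being $H$-equivalent, and defining the same point of $X_H$---as one-sided coset conditions on matrices. Fixing the reference basis $(P_0,Q_0)$, every ordered basis of $E[N]$ is uniquely of the form $g(P_0,Q_0)$ for a unique $g\in\GL_2(\Z/N\Z)$, so I will identify the basis $(P,Q)=g(P_0,Q_0)$ with the matrix $g$; distinct bases then correspond to distinct matrices. Under this identification the task becomes to count those $g'$ for which $(E,g'(P_0,Q_0)\bmod H)$ and $(E,g(P_0,Q_0)\bmod H)$ are the same point of $X_H$.

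First I would translate $H$-equivalence. Unwinding the definition of the relation ``equivalent mod $H$'' directly on coordinate vectors shows that two bases are $H$-equivalent precisely when their matrices differ by multiplication by an element of $H$ on one fixed side; thus the $H$-level structure attached to $g$ is exactly the coset of $g$ under $H$. Hence the candidate matrices $g'$ are organised by these cosets, and it suffices to determine which cosets arise.

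Next I would translate ``same point of $X_H$''. As explained in the discussion preceding the lemma, after replacing $E$ by a $K$-model we may assume every isomorphism witnessing equality of points on $X_H$ is an automorphism $\phi\in\Aut(E)$, acting on $E[N]$ by a matrix recorded in $A$ with respect to $(P_0,Q_0)$. Writing the condition ``$\phi$ carries a representative of the first level structure into the second'' in matrix form shows that $\phi$ multiplies $g$ on the side \emph{opposite} to the $H$-action. Combining the two translations, the set of matrices $g'$ yielding the same point on $X_H$ is exactly what is swept out by multiplying $g$ by $H$ on one side and by $A$ on the other, i.e.\ the double coset $HgA$. Since distinct bases are distinct matrices, the number of such bases is $|HgA|$, as claimed. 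For later use it is worth recording the standard formula $|HgA| = |H|\,|A| / |H \cap gAg^{-1}|$, which makes the count explicit once $A$ and $H\cap gAg^{-1}$ are understood.

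The main obstacle is bookkeeping rather than mathematics: I must pin down the left/right conventions so that the $H$-action and the $\Aut(E)$-action genuinely land on opposite sides of $g$, since otherwise one obtains a product of cosets on the same side rather than an honest double coset, and the cardinality can differ. Care is also needed to count matrices (bases) rather than cosets, and to keep in mind that $A$ already contains the scalar $-I$ coming from the automorphism $-1$, so that the scalar part of $\Aut(E)$ contributes to $HgA$ exactly as the general formula predicts.
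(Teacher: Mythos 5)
Your proposal is correct and follows essentially the same route as the paper: identify bases with matrices via $(P',Q')=g'(P_0,Q_0)$, observe that the $H$-action and the $\Aut(E)$-action multiply $g$ on opposite sides, and conclude that the bases giving the same point are exactly those with matrix in $HgA$. Your added remark about the formula $|HgA|=|H|\,|A|/|H\cap gAg^{-1}|$ is a harmless extra not present in the paper.
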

\begin{proof}
    Any basis $(P', Q')$ can be uniquely written as $g(P_0, Q_0)$ for some $g \in \GL_2(\Z/N\Z)$. Then $(E, g_1(P_0,Q_0) \pmod H)$ and $(E, g_2(P_0, Q_0) \pmod H)$ define the same point on $X_H$ if and only if $g_1(P_0, Q_0) \pmod H=g_2(\phi(P), \phi(Q)) \pmod H$ for some $\phi \in \Aut(E)$, which happens if and only if there are some $h \in H$ and $\phi \in \Aut(E)$ such that $$g_1(P_0,Q_0)=h(g_2(a(P_0, Q_0))),$$ where $a$ denotes the matrix of action by the automorphism $\phi$ in basis $(P_0,Q_0)$. This is equivalent to $g_1=hg_2a$.
\end{proof}

\begin{proposition}
    Let $E/K$ be any elliptic curve with $j$-invariant $j_0$. Fix a basis $(P,Q)$ for $E[N]$. Denote by $R$ the image $\rho_{E,N}(G_K)$ in this basis and by $A$ the image of action of $\Aut(E)$ on $E[N]$ in this basis. The number of $K$-rational points $x$ on $X_H$ with $j(x)=j_0$ equals $$\sum_{g \in \GL_2(\Z/N\Z)} \frac{\mathbf{1}_{\{gR \subseteq HgA\}}}{\#(HgA)}.$$
\end{proposition}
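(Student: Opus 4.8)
The plan is to count the $K$-rational points $x$ on $X_H$ with $j(x)=j_0$ by counting the underlying bases for $E[N]$ and then correcting for overcounting. Fix once and for all an elliptic curve $E/K$ with $j(E)=j_0$ and a reference basis $(P_0,Q_0)$; write $R$ and $A$ for the images of $\rho_{E,N}(G_K)$ and of $\Aut(E)$ in this basis. Every basis for $E[N]$ is $g(P_0,Q_0)$ for a unique $g\in\GL_2(\Z/N\Z)$, so the set of bases is in canonical bijection with $\GL_2(\Z/N\Z)$. As recalled in the discussion preceding \Cref{broj baza}, the point $(E,g(P_0,Q_0)\pmod H)$ is $K$-rational precisely when $R_{(P,Q)}\subseteq H\cdot A_{(P,Q)}$ in the basis $(P,Q)=g(P_0,Q_0)$; translating this condition back into the reference basis $(P_0,Q_0)$ conjugates everything by $g$ and turns it into $gR\subseteq HgA$, which is exactly the indicator $\mathbf 1_{\{gR\subseteq HgA\}}$.

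**From bases to points.**

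First I would show that the rational points on $X_H$ above $j_0$ are exactly the images of the $K$-rational bases under the map sending a basis to the point it defines. Every $K$-rational point can be represented as $(E,\alpha)$ with $E$ our fixed curve (all curves with $j$-invariant $j_0$ are $\overline\Q$-isomorphic, so we lose nothing by fixing this model), and $\alpha$ lifts to some ordered basis; thus the map from $K$-rational bases onto $K$-rational points above $j_0$ is surjective. Hence if I sum the indicator $\mathbf 1_{\{gR\subseteq HgA\}}$ over all $g$, I count each $K$-rational point once for every basis that represents it, i.e.\ with multiplicity equal to the size of its fiber.

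**Accounting for multiplicity via \Cref{broj baza}.**

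The crucial step is computing that fiber size. By \Cref{broj baza}, the number of bases defining the same point on $X_H$ as $g(P_0,Q_0)$ is exactly $\#(HgA)$. Moreover I must check that the $K$-rationality condition is constant along such a fiber: if $g'(P_0,Q_0)$ and $g(P_0,Q_0)$ define the same point, then $g'=hga$ for some $h\in H$, $a\in A$, and a short computation shows $g'R\subseteq Hg'A \iff gR\subseteq HgA$, so the indicator is constant on each fiber. Therefore, weighting each term by $1/\#(HgA)$ exactly cancels the overcounting: each $K$-rational point $x$ contributes $\sum_{g \text{ in its fiber}} 1/\#(HgA)=\#(\text{fiber})\cdot (1/\#(\text{fiber}))=1$, while each non-rational point contributes $0$. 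Summing over all $g$ then yields precisely the number of $K$-rational points above $j_0$, which is the claimed formula.

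**Main obstacle.**

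The step I expect to require the most care is verifying that the indicator $\mathbf 1_{\{gR\subseteq HgA\}}$ is genuinely invariant along the fiber $HgA$, since the condition is a containment of \emph{cosets} rather than of the individual matrix $g$. Concretely, one must confirm that replacing $g$ by $hga$ leaves the containment unchanged, using that $H$ absorbs left multiplication by $h$ and that $A$ absorbs right multiplication by $a$ (the latter requiring that $A$ is a subgroup, which holds because it is the image of the automorphism group $\Aut(E)$); only once this compatibility is established does the reciprocal-weighting argument correctly collapse each fiber to a single count.
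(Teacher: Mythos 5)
Your proof is correct and follows essentially the same route as the paper: translate rationality of the point defined by the basis $g(P_0,Q_0)$ into the condition $gR\subseteq HgA$, invoke \Cref{broj baza} for the fiber size $\#(HgA)$, and divide. The invariance of the indicator along a double coset, which you flag as the main remaining obstacle, is exactly what the paper verifies in the remark immediately following the proposition (using that $R$ normalizes $A$, not merely that $A$ is a subgroup), so your instinct about where care is needed matches the paper's presentation.
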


\begin{proof}
    Note that the change of basis is equivalent to conjugating the groups $R$ and $A$ by some element $g \in \GL_2(\Z/N\Z)$. This means that the corresponding basis $(P',Q')$ yields a rational point on $X_H$ if and only if $$gRg^{-1} \subseteq H\cdot gAg^{-1}.$$
    By \Cref{broj baza}, there are $\#(HgA)$ bases which yield the same point on $X_H$, so we need to divide the corresponding summand by this number.
\end{proof}

\begin{remark}
Note that the condition $gR \subseteq HgA$ in fact only depends on the double coset of $g$, so the sum can be written to only span over double cosets, and equals $$\#\{HgA \in H\backslash G/A \mid gR\subseteq HgA\}.$$
Namely, if $gR \subseteq HgA$ and $h\in H$, then $hgR\subseteq hHgA=HhgA$.  Similarly, suppose that $gR \subseteq HgA$ and $a \in A$. Let $gar \in gaR$. Then note that $ra=a'r$ for some $a' \in A$, since if $\phi \in \Aut(E)$ and $\sigma \in G_K$, then $\sigma(\phi(P))=\phi^\sigma(\sigma(P))$, and $\phi^\sigma \in \Aut(E)$. In other words, each $r \in R$ normalizes $A$, so $AR=RA$, and $gar \in gRa'\subseteq HgAa'=HgA$.
\end{remark}

\begin{corollary}
    Let $j_0\neq 0, 1728$. With the notation from the previous proposition, the number of $K$-rational points $x$ on $X_H$ with $j(x)=j_0$ equals $$\frac{\#\{g \in \GL_2(\Z/N\Z) \mid gRg^{-1} \subseteq \pm H\}}{\#(\pm H)}.$$
\end{corollary}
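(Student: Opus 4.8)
The plan is to specialize the previous proposition to the case $j_0 \neq 0, 1728$, where the automorphism group of the elliptic curve $E/K$ is simply $\{\pm 1\}$. First I would observe that for such a $j$-invariant, $\Aut(E) = \{\pm 1\}$, so the image $A$ of the automorphism action on $E[N]$ is $\{\pm I\}$, the scalar matrices $\pm I \in \GL_2(\Z/N\Z)$. Since $\pm I$ is central, conjugation fixes it: $gAg^{-1} = \{\pm I\} = A$ for every $g$. This immediately simplifies the right-hand side of the containment condition in the proposition, since $HgA = H g \{\pm I\} = (\pm H) g$ as $\pm I$ commutes with $g$, and likewise $HgAg^{-1} = \pm H$.

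Next I would rewrite the indicator condition $gRg^{-1} \subseteq H \cdot gAg^{-1}$ from the proposition's summand. Because $A = \{\pm I\}$ is central, $gAg^{-1} = A$, so the condition becomes $gRg^{-1} \subseteq H \cdot \{\pm I\} = \pm H$. This is precisely the condition appearing in the corollary's numerator. I would then address the denominator: the proposition weights each $g$ by $1/\#(HgA)$, and I must show that for every $g$ satisfying the containment, this denominator is the constant $\#(\pm H)$. Since $A = \{\pm I\}$ is central, the double coset $HgA = Hg\{\pm I\} = (\pm H)g$, which is a single right coset of the subgroup $\pm H$; hence $\#(HgA) = \#((\pm H)g) = \#(\pm H)$, independent of $g$.

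With both simplifications in hand, the sum in the proposition becomes
\begin{equation*}
\sum_{g \in \GL_2(\Z/N\Z)} \frac{\mathbf{1}_{\{gRg^{-1} \subseteq \pm H\}}}{\#(\pm H)} = \frac{1}{\#(\pm H)} \sum_{g \in \GL_2(\Z/N\Z)} \mathbf{1}_{\{gRg^{-1} \subseteq \pm H\}} = \frac{\#\{g \in \GL_2(\Z/N\Z) \mid gRg^{-1} \subseteq \pm H\}}{\#(\pm H)},
\end{equation*}
which is exactly the claimed formula. The one subtlety worth double-checking is the translation between the form $gR \subseteq HgA$ written in the proposition and the conjugation form $gRg^{-1} \subseteq H \cdot gAg^{-1}$ used in its proof; since these are equivalent (multiply on the right by $g^{-1}$ and use centrality of $A$), I would state the condition in whichever form makes the centrality argument cleanest.

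The main obstacle, though a minor one, is simply making the centrality of $\pm I$ do all the work cleanly: one must be careful that $\pm H$ genuinely denotes the subgroup $H \cdot \{\pm I\}$ generated by $H$ together with $-I$, and that $(\pm H)g$ really is a coset of this subgroup so that its cardinality is $\#(\pm H)$. No deep input is required beyond $\Aut(E) = \{\pm 1\}$ for $j_0 \neq 0, 1728$ and the fact that scalar matrices are central; the rest is bookkeeping that collapses the weighted double-coset sum into a plain count divided by a constant.
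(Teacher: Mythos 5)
Your proposal is correct and is exactly the intended derivation: the paper states this corollary without proof as an immediate specialization of the preceding proposition, and your argument (centrality of $A=\{\pm I\}$ collapses each double coset $HgA$ to the coset $(\pm H)g$ of constant size $\#(\pm H)$, and turns the containment condition into $gRg^{-1}\subseteq \pm H$) is the reasoning the paper implicitly relies on. No gaps; the bookkeeping is handled properly.
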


Note that the number of points on $X_H$ and $X_{\pm H}$ is the same. The following well-known lemma explains this in a different way.


    

\begin{lemma}
    If $\rho_{E,N}(G_K)$ is a subgroup of $\pm H$ in some basis for $E[N]$ and $-I \not \in H$, there is a quadratic twist $E'$ of $E$ defined over $K$ such that $\rho_{E', N}(G_K)$ is conjugate to a subgroup of $H$. 
\end{lemma}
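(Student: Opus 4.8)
The plan is to produce the twist through the quadratic character that measures the failure of $\rho_{E,N}(G_K)$ to land in $H$. Since $-I$ is central in $\GL_2(\Z/N\Z)$ and $-I \notin H$, the subgroup $H$ is normal of index $2$ in $\pm H = H \sqcup (-I)H$, so there is a surjective homomorphism $\psi \colon \pm H \to \{\pm 1\}$ with kernel $H$, sending $H$ to $1$ and $(-I)H$ to $-1$. Composing with the representation gives a homomorphism $\chi = \psi \circ \rho_{E,N} \colon G_K \to \{\pm 1\}$. This $\chi$ is continuous because $\rho_{E,N}$ factors through $\Gal(K(E[N])/K)$, so by Kummer theory it corresponds to a quadratic (possibly trivial) extension $K(\sqrt d)/K$ for some $d \in K^\times$; I write $\chi_d = \chi$ for the associated character.

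Next I would invoke the standard description of the mod $N$ representation of a quadratic twist. Let $E'$ be the twist of $E$ by $d$ and fix an isomorphism $\phi \colon E \to E'$ defined over $K(\sqrt d)$, which satisfies $\phi^\sigma = \chi_d(\sigma)\phi$ for all $\sigma \in G_K$. Transporting the chosen basis $(P,Q)$ of $E[N]$ to the basis $(\phi(P), \phi(Q))$ of $E'[N]$ and using $\sigma(\phi(P)) = \phi^\sigma(\sigma(P))$, one reads off that in this basis $$\rho_{E',N}(\sigma) = \chi_d(\sigma)\,\rho_{E,N}(\sigma)$$ for every $\sigma \in G_K$, where $\chi_d(\sigma) \in \{\pm 1\}$ acts as the scalar matrix $\pm I$. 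This is the one external fact the argument relies on, and it is where the word \emph{conjugate} in the statement enters: the representation of $E'$ is computed in the transported basis, which differs from an a priori fixed basis by an element of $\GL_2(\Z/N\Z)$.

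It then remains to check case by case that $\rho_{E',N}(\sigma) \in H$. If $\chi_d(\sigma) = 1$, then $\rho_{E,N}(\sigma) \in \ker \psi = H$ and $\rho_{E',N}(\sigma) = \rho_{E,N}(\sigma) \in H$. If $\chi_d(\sigma) = -1$, then $\rho_{E,N}(\sigma) \in (-I)H$, say $\rho_{E,N}(\sigma) = (-I)h$ with $h \in H$, and $\rho_{E',N}(\sigma) = (-I)(-I)h = h \in H$. Hence $\rho_{E',N}(G_K) \subseteq H$ in the transported basis, i.e.\ conjugate to a subgroup of $H$, as claimed.

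The argument is essentially bookkeeping once the twisting formula is in hand; the only genuine inputs are that quadratic twisting multiplies $\rho_{E,N}$ by the corresponding quadratic character and that, by Kummer theory, every quadratic character of $G_K$ arises from some $d \in K^\times$. The main obstacle is therefore purely expository: one must verify that $\psi$ is well defined (which uses centrality of $-I$, so that $H$ is normal in $\pm H$ and the quotient has order exactly $2$), and that the twist is taken by precisely the $d$ cutting out $\chi$, so that the two copies of $\pm I$ cancel rather than reinforce.
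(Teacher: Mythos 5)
Your proof is correct and follows essentially the same route as the paper: extract the quadratic character $\chi_d$ measuring whether $\rho_{E,N}(\sigma)$ lies in $H$ or $-H$, twist by the corresponding $d$, and use the formula $\rho_{E',N}(\sigma)=\chi_d(\sigma)\rho_{E,N}(\sigma)$ to cancel the signs. Your write-up is somewhat more careful than the paper's (you justify that $\psi$ is well defined, note the possibly-trivial character, and explain where ``conjugate'' comes from via the transported basis), but the underlying argument is identical.
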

\begin{proof}
Take a basis in which the Galois image $\rho_{E,N}(G_K)$ is a subgroup of $\pm H$. Now, for each $\sigma \in G_K$, we either have $\rho(\sigma) \in H$ or $\rho(\sigma) \in -H$, which defines a homomorphism $G_K \to \{\pm 1\}$. The kernel of this homomorphism is an index $2$ subgroup, which is then equal to $G_{K(\sqrt d)}$ for some $d \in K$, and the mentioned homomorphism is the corresponding quadratic character $\chi_d$. Now take the quadratic twist of this elliptic curve by $d$. We then have $$\rho_{E', N}(\sigma)=\chi_d(\sigma)\rho_{E, N}(\sigma) \in H.$$
\end{proof}

If the point $x=(E, (P,Q) \pmod H)$ on $X_H$ is not defined over $K$ for $E/K$, one might be interested in determining the degree of $x$ over $K$. This is easy to calculate in terms of the image of the Galois representation in that basis.

\begin{lemma}
    Let $E/K$ be an elliptic curve, and fix a basis $(P_0,Q_0)$ for $E[N]$. Let $R\leq \GL_2(\Z/N\Z)$ be the Galois image $\rho_{E,N}(G_K)$ in that basis, and let $A$ be the image of the action of $\Aut(E)$ on $E[N]$ in that basis. Let $(P,Q)=g(P_0, Q_0)$ be a different basis, with $g \in \GL_2(\Z/N\Z)$.  The degree over $K$ of the point $(E, (P,Q)\pmod H)$ on the modular curve $X_H$ is equal to the index $[R:(R\cap(g^{-1}H gA))]$.
\end{lemma}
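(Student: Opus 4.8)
The plan is to compute the degree as the size of the Galois orbit of $x$ and then transport this to an index inside $R$ via the mod $N$ representation. Concretely, $[K(x):K]$ equals the number of distinct conjugates $\sigma(x)$ for $\sigma \in G_K$, which by orbit–stabiliser equals $[G_K : \mathrm{Stab}_{G_K}(x)]$, where $\mathrm{Stab}_{G_K}(x)=\{\sigma\in G_K : \sigma(x)=x\}$. Since $\sigma$ acts on the level structure only through its image $r=\rho_{E,N}(\sigma)$, the kernel of $\rho_{E,N}$ fixes $x$; thus $\ker\rho_{E,N}\subseteq \mathrm{Stab}_{G_K}(x)$, and pushing forward along the surjection $\rho_{E,N}\colon G_K\to R$ gives $[G_K:\mathrm{Stab}_{G_K}(x)] = [R:\rho_{E,N}(\mathrm{Stab}_{G_K}(x))]$. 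It therefore suffices to identify the subgroup $\rho_{E,N}(\mathrm{Stab}_{G_K}(x))\leq R$.

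First I would determine, for a fixed $\sigma$ with $r=\rho_{E,N}(\sigma)$, exactly when $\sigma$ fixes $x$. This is the single-element version of the rationality computation already carried out above: writing $(P,Q)=g(P_0,Q_0)$, the conjugate $\sigma(x)$ is represented by the level structure $(P^\sigma,Q^\sigma)$, and $\sigma(x)=x$ holds precisely when $(P^\sigma,Q^\sigma)$ and $(\phi(P),\phi(Q))$ agree modulo $H$ for some $\phi\in\Aut(E)$. Translating into matrices exactly as in \Cref{broj baza} and in the proof of the preceding counting proposition — where changing to the basis $(P,Q)$ conjugates $R$ and $A$ to $gRg^{-1}$ and $gAg^{-1}$ — this condition becomes $grg^{-1}\in H\cdot gAg^{-1}$, equivalently $gr=hga$ for some $h\in H$, $a\in A$, i.e.\ $r\in g^{-1}HgA$. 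Hence $\rho_{E,N}(\mathrm{Stab}_{G_K}(x)) = R\cap g^{-1}HgA$, and the stated index $[R:(R\cap(g^{-1}HgA))]$ follows.

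The one point that needs care is that $g^{-1}HgA$ is merely a product of two subgroups and need not itself be a subgroup, so a priori ``$r\in g^{-1}HgA$'' is only a membership condition, not visibly closed under multiplication or inversion. I would resolve this by observing that $\mathrm{Stab}_{G_K}(x)$ is a genuine subgroup of $G_K$ (it is the absolute Galois group of the residue field $K(x)$), so its image $R\cap g^{-1}HgA=\rho_{E,N}(\mathrm{Stab}_{G_K}(x))$ is automatically a subgroup of $R$; this is what makes the final index well-defined. The same observation shows that the left/right placement in the condition is immaterial: being a subgroup, the image is closed under inverses, and since $R$, $H$, $A$ are groups one has $(g^{-1}HgA)^{-1}=Ag^{-1}Hg$, whence $R\cap g^{-1}HgA=R\cap Ag^{-1}Hg$ and either form may be used. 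I expect this bookkeeping — keeping the conjugation-by-$g$ and the $H$-on-the-left, $A$-on-the-right conventions consistent with \Cref{broj baza} — to be the only real obstacle, the remainder being a direct specialisation of the rationality argument to a single automorphism $\sigma$.
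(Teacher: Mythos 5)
Your proof is correct and follows essentially the same route as the paper: both arguments reduce the degree to the index of a stabiliser under the induced action of $R$ on level structures (equivalently, on the double cosets $HgA$) and identify that stabiliser as $R\cap g^{-1}HgA$ via the condition $gr=hga$. Your observation that $R\cap g^{-1}HgA$ is automatically a subgroup because it is the image of $\mathrm{Stab}_{G_K}(x)$ even supplies the justification for the remark the paper makes alongside this lemma.
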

\begin{remark}
    Note that $g^{-1}HgA$ is not neccessarily a group if $A$ is larger than $\pm I$. However, in this setting, $R\cap(g^{-1}H gA)$ turns out to be a group.
\end{remark}
\begin{proof}
    The action of the Galois group $G_K$ on bases for $E[N]$ modulo the action of $H$ and $\Aut(E)$ is realised as the action of the group $R$ on double cosets $HgA$ for $g \in \GL_2(\Z/N\Z)$, and is given by $\sigma(HgA)=HgrA$, where $r\in \GL_2(\Z/N\Z)$ is the matrix determined by the action of $\sigma$ on $(P_0, Q_0)$. 
    
    The degree of $(E, g(P_0,Q_0) \pmod H)$ is equal to the size of the orbit of $HgA$ under the action of $R$, which is equal to the index of the stabilizer of $HgA$ inside $R$. The coset $HgA$ is stabilized if and only if $HgA=HgrA$, which is equivalent with $HgA=HgAr$, since $R$ normalizes $A$. 

    Now note that $HgA=HgAr$ holds for some $r \in R$ if and only if $grg^{-1} \in HgAg^{-1}$. Namely, if $grg^{-1}=hgag^{-1}$ for some $h \in H$, $a \in A$, then $r=g^{-1}hga$ and $$HgAr=HgrA=HhgaA=HgA.$$
    Conversely, if $HgA=HgrA$, then for each $hgra \in HgAr$, there are some $h' \in H$ and $a'\in A$ with $hgra=h'ga'$, which implies $r=g^{-1}h''ga''$ for some $h'' \in H$, $a \in A$. Thus, the stabilizer of $HgA$ is equal to $R \cap (g^{-1}HgA)$.
\end{proof}
\noindent More on the topic of degrees of points on modular curves can be found in Terao's article \cite{kenji}.



    






\section{Number of $p^k$-isogenies}\label{sec3}

Let $p$ be a prime and $k$ a positive integer. Let $E$ be an elliptic curve over some number field $K$. We want to determine the possible number of $p^k$-isogenies that $E$ can have over $K$. When $k=1$ and $p$ is odd, it is a well-known fact that $E$ has either $0,1, 2$ or $p+1$  isogenies of degree $p$ over $K$, and $E$ has either $0,1$ or $3$ isogenies of degree $2$ over $K$.

The number of $p^k$-isogenies is equal to the number of cyclic subgroups of order $p^k$ of $E[p^k]$ fixed by the Galois group $G_K$. Note that if we are free to choose both an elliptic curve and a number field, then we can completely reduce this problem to a question about subgroups of $\GL_2(\Z/p^k\Z)$ acting on $\Z/p^k\Z \times \Z/p^k\Z$. Namely, take any elliptic curve over a field $K$ such that $\rho_{E, p^k}(G_K)$ is surjective\textsuperscript{2}\footnote{\textsuperscript{2}For example, one may take the curve 37.a.1. from LMFDB. Its mod $p^k$ Galois image $\rho_{E,p^k}(G_\Q)$ is surjective for all prime powers $p^k$.}, and fix any basis for $E[p^k]$, so that $\rho_{E,p^k}(G_K)$ can be identified with $GL_2(\Z/N\Z)$.   If $H \leq \GL_2(\Z/p^k\Z)$ is a subgroup, $\rho^{-1}(H)$ is a subgroup of $G_K$ of finite index (it lies between $G_K$ and $G_{K(E[p^k])}$). Let $F$ be the fixed field of $\rho^{-1}(H)$. By Galois theory, $G_F=\rho^{-1}(H)$ and $\rho(G_F)=H$, so the image of the mod $p^k$ Galois representation of $E/F$ is equal to $H$.

The modular curve $X_0(p^k)$ parametrizes pairs $(E, C)$, where $E$ is an elliptic curve and $C$ is a cyclic subgroup of $E[p^k]$ of order $p^k$, up to isomorphism. The number of $p^k$-isogenies of an elliptic curve $E/K$ which are defined over $K$ is equal to the number of points on $X_0(p^k)$ whose $j$-invariant equals $j(E)$, unless $j=0$ or $j=1728$. In that case, two different $p^k$-isogenies can yield the same point on $X_0(p^k)$. For example, if $j=1728$ and $\zeta_3 \in K$, the automorphism $[\zeta_3]$ gives the equality of points $(E, C)$ and $(E, [\zeta_3]C)$ on the modular curve $X_0(p^k)$, despite the fact that the subgroup $C$ is not neccessarily equal to $[\zeta_3]C$.

In any case, if $R$ denotes the image of the mod $p^k$ Galois representation $\rho_{E,p^k}(G_K)$ in some basis, the number of $p^k$-isogenies over $K$ is equal to $$\frac{\#\{g \in \GL_2(\Z/p^k\Z) \mid gRg^{-1} \subseteq B_0(p^k)\} }{\#B_0(p^k)},$$
where $B_0(p^k)$ is the group of uppertriangular invertible $2\times 2$ matrices modulo $p^k$. 
Our goal is to determine all possible values that this expression can obtain, as $R$ varies over all subgroups  of $\GL_2(\Z/p^k\Z)$.

More generally, let $N$ be a positive integer and $H$ a subgroup of $\GL_2(\Z/N\Z)$. Denote by $S(H)$ the set of all values of the expression $$\frac{\#\{g \in \GL_2(\Z/N\Z) \mid gRg^{-1} \subseteq H\} }{\#H},$$
as $R$ ranges over all subgroups of $\GL_2(\Z/N\Z)$.
\begin{proposition}\label{neparno}
    Let $p$ be an odd prime and $k$ a positive integer. The set $S(B_0(p^k))$ is contained in the set $$\{0\} \cup \{p^j \mid 0\leq j <k\}\cup \{2p^j \mid 0\leq j <k\}\cup \{p^k+p^{k-1}\}.$$ 
\end{proposition}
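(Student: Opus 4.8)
The plan is to recast the quantity, then prove a more general statement by strong induction on $k$.

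First I would reinterpret the expression defining $S(B_0(p^k))$. As explained in the discussion preceding the proposition, for a subgroup $R\le \GL_2(\Z/p^k\Z)$ the quantity $\#\{g : gRg^{-1}\subseteq B_0(p^k)\}/\#B_0(p^k)$ equals the number of $R$-stable cyclic subgroups of order $p^k$ in $(\Z/p^k\Z)^2$: indeed $\GL_2/B_0$ is identified with $\PP^1(\Z/p^k\Z)$, and $gRg^{-1}\subseteq B_0(p^k)$ holds iff $R$ stabilises the line corresponding to $g$, each stable line being counted by exactly $\#B_0(p^k)$ values of $g$. So the value in question is the number of common eigenlines of $R$ on $\PP^1(\Z/p^k\Z)$. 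More generally, for any subset $S\subseteq M_2(\Z/p^k\Z)$ let $N_k(S)$ denote the number of lines $L\in\PP^1(\Z/p^k\Z)$ with $sL\subseteq L$ for every $s\in S$, and let $\mathcal S_k$ be the set in the statement. Then $N_k(R)$ is exactly the quantity of interest, so it suffices to prove $N_k(S)\in\mathcal S_k$ for \emph{every} subset $S$. Passing to arbitrary subsets is what makes the induction close, since the inductive step produces a set that need not be a group.

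The induction is on $k$. For $k=1$ we have $\mathcal S_1=\{0,1,2,p+1\}$, and $N_1(S)$ is the number of common eigenlines on $\PP^1(\F_p)$ of the reduction $\tilde S$ of $S$ mod $p$: either some element of $\tilde S$ is non-scalar, forcing at most $2$ common eigenlines, or $\tilde S$ is scalar and all $p+1$ lines are eigenlines. For the step, reduce mod $p$ and let $P\subseteq\PP^1(\F_p)$ be the set of common eigenlines of $\tilde S$, so $\#P\in\{0,1,2,p+1\}$ and every $S$-stable line reduces into $P$. If $\#P=0$ then $N_k(S)=0$. If $\#P=p+1$, then every element of $\tilde S$ is scalar; writing $s=\lambda_s I+pM_s$ for a lift of the scalar $\lambda_s$, a short computation shows that $L=\langle v\rangle$ is $S$-stable iff the reduction of $v$ mod $p^{k-1}$ is a common eigenline of $S':=\{M_s\}\subseteq M_2(\Z/p^{k-1}\Z)$. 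Since the fibre of $\PP^1(\Z/p^k\Z)\to\PP^1(\Z/p^{k-1}\Z)$ has size $p$, this gives $N_k(S)=p\,N_{k-1}(S')$, and the inductive hypothesis yields $N_k(S)\in p\,\mathcal S_{k-1}\subseteq\mathcal S_k$.

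It remains to treat $1\le\#P\le 2$. After conjugating, take a mod-$p$ common eigenline $x=[1:0]$ and parametrise the lines above it by $L_t=\langle(1,pt)\rangle$, $t\in\Z/p^{k-1}\Z$. For $s=\sm{a&b\\c&d}$ with $c\equiv 0\pmod p$ (forced, since $\tilde s$ fixes $x$), one computes that $sL_t\subseteq L_t$ is equivalent to $g_s(t):=bp\,t^2+(a-d)t-c/p\equiv 0\pmod{p^{k-1}}$. If some $s$ has $a\not\equiv d\pmod p$, then $g_s$ has a unit derivative, hence a unique root, so the fibre over $x$ contributes at most one line; in particular, when $\#P=2$ we may take the eigenlines to be $[1:0]$ and $[0:1]$, all $\tilde s$ are then diagonal with some $a\not\equiv d$, each fibre contributes at most one line, and $N_k(S)\le 2$. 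Otherwise every $s$ has $a\equiv d\pmod p$, and the contribution over $x$ is the common zero set $Z=\bigcap_s Z_s$ of the congruences $g_s(t)\equiv 0$. Here each $Z_s$ is either all of $\Z/p^{k-1}\Z$ or, by Hensel lifting for $p$ odd, a union of balls clustered around the at most two $p$-adic roots of $g_s$; for a genuine quadratic the simple roots satisfy $v_p(g_s'(r_1))=v_p(g_s'(r_2))=v_p(bp)+v_p(r_1-r_2)$, so the two balls have equal radius. Using that the balls are cosets of the totally ordered subgroups $p^f\Z/p^{k-1}\Z$, one checks that an intersection of such two-ball sets is again a union of at most two balls, and that when two survive they must have equal radius. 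Hence $\#Z\in\{0\}\cup\{p^e\}\cup\{2p^e\}\subseteq\mathcal S_k$, completing this case.

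The main obstacle is exactly this last tower estimate: showing that the common zero set over a single mod-$p$ eigenline is empty, one ball, or two balls of \emph{equal} radius, so that it contributes $0$, $p^e$, or $2p^e$ rather than a sum of unequal prime powers or three or more balls. This is where the quadratic nature of $g_s$ and the ultrametric geometry of $\Z/p^{k-1}\Z$ must be combined with care, and it is also where the hypothesis that $p$ is odd is essential: for $p=2$ the root-clustering of $g_s$ degenerates, consistent with the separate statement recorded for $2^k$. By contrast, the scalar reduction $N_k(S)=p\,N_{k-1}(S')$ is purely formal and carries most of the values $p^j$ and $2p^j$ upward through the induction, with the maximal value $p^k+p^{k-1}$ arising precisely when $S$ consists of scalar matrices.
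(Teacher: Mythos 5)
Your argument is correct, and it takes a genuinely different route from the paper. The paper works entirely at level $p^k$: it assumes $R\subseteq B_0(p^k)$, writes the condition on the bottom row of $g$ as $w(z(a-c)-bw)=0$, splits $\PP^1(\Z/p^k\Z)$ into the two charts giving the linear "type A'' system $(c-a)x=b$ and the quadratic "type B'' system $(c-a)y=by^2$ with $p\mid y$, and then tracks the two valuations $\alpha=\min v_p(c-a)$ and $\beta=\min v_p(b)$ through a proper/improper case analysis (including a small lemma producing one matrix realising both minima). You instead induct on $k$ after generalising to arbitrary subsets of $M_2(\Z/p^k\Z)$ — the right move, since the scalar-reduction step $N_k(S)=p\,N_{k-1}(S')$ produces a set $S'$ that is not a group — and you replace the valuation bookkeeping by a structural lemma: the zero set of each fibre quadratic $g_s$ modulo $p^{k-1}$ is empty, one coset of some $p^f\Z/p^{k-1}\Z$, or two such cosets of equal size, and the laminar (nested-or-disjoint) property of these cosets makes the class stable under intersection. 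Your version is more geometric, makes the role of the hypothesis $p$ odd transparent (completing the square), and explains for free why $p^k$ itself never occurs; the paper's explicit formulas for the count in terms of $\alpha$ and $\beta$ are, on the other hand, exactly what gets reused in the achievability proposition and in the later result on $S_{k\to k+1}$, so the two proofs are complementary. One small point to tighten: the description of $Z_s$ as "balls clustered around the at most two $p$-adic roots'' is not literally accurate when the discriminant is a non-square times a high power of $p$ (the roots may fail to exist in $\Z_p$ while the zero set mod $p^{k-1}$ is still a nonempty single ball); deriving the empty/one-ball/two-equal-balls trichotomy by completing the square, i.e.\ reducing to $u^2\equiv \Delta \pmod{p^{k-1}}$, avoids this and is what your intersection lemma actually needs.
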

\begin{proof}
    Let $R$ be a subgroup of $\GL_2(\Z/p^k\Z)$. If $R$ fixes no subgroups of $\Z/p^k\Z \times \Z/p^k\Z$ of order $p^k$, the number is $0$. Otherwise, $R$ is conjugate to a subgroup of $B_0(p^k)$, and it is not a loss of generality to assume that $R$ itself is a subgroup of $B_0(p^k)$.

    Now suppose that $gRg^{-1} \subseteq B_0(p^k)$. If $g=\sm{u & v \\ w & z}$, that means that for every $\sm{a & b \\ 0 & c}\in R$, the bottom-left entry of $\sm{u & v \\ w & z}\sm{a & b \\ 0 & c}\sm{u & v \\ w & z}^{-1}$ is equal to $0$. Explicitly calculating the bottom-left entry, we obtain the equality \begin{equation}\label{uvjet} w(z(a-c)-bw)=0, \ \text{for all } \sm{a & b \\ 0 & c}\in R.\end{equation}

    We now need to calculate the number of pairs $(w,z)$ for which this holds. Note that since $g \in \GL_2(\Z/p^k\Z)$, at least one among $w$ and $z$ is not divisible by $p$.

    Suppose that $w\not \equiv 0 \pmod{p}$. Dividing the equality \eqref{uvjet} by $w^2$ and setting $x=\frac{-z}{w}$, we obtain $$(c-a)x=b, \text{ for all } \sm{a & b \\0 & c } \in R.$$
    This is a system of equations in one variable over $\Z/p^k\Z$. We will call solutions of this system ``type A solutions''.

    Suppose that $w\equiv 0 \pmod p$. Then $z\not \equiv 0 \pmod p$. Dividing \cref{uvjet} by $z^2$ and setting $y=\frac{-w}{z}$, we obtain 
    $$(c-a)y=by^2, \text{ for all } \sm{a & b \\0 & c } \in R.$$
    Note that there is an additional constraint that $y\in p\Z/p^k\Z$. We will call solutions of this system ``type B solutions''.

    Now note that, for every solution of any type, there are $\#B_0(p^k)$ different matrices $\sm{u & v \\w & z}$ above it. Namely, if $x$ is a solution of type A, then we can choose $w$ to be any number coprime to $p$ in $\varphi(p^k)$ ways, we then have $z=-wx$, and we can choose $u$ and $v$ in $p^k \cdot \varphi(p^k)$ ways in such a way that $(u,v)$ and $(z,w)$ are coprime. Similarly, if $y$ is a solution of type B, we choose $z$ to be any number coprime to $p$, which uniquely determines $w$, and we get $p^k\cdot \varphi(p^k)$ ways to choose the entire matrix. Since $\#B_0(p^k)=p^k\cdot \varphi(p^k)^2$, this proves our claim.

    Let $A$ and $B$ respectively denote the number of solutions of type A and type B. We now need to determine all possible values of $A+B$. Denote by $\alpha$ the least possible value of $v_p(c-a)$ and by $\beta$ the least possible value of $v_p(b)$, for $\sm{a & b \\0 & c} \in R$.

   \noindent \textbf{Case 1.} There exists a solution of type A.  
   
   This means that there exists some $x\in \Z/p^k\Z$ such that $b=(c-a)x$ for all $\sm{a & b \\ 0 & c}\in R$. Note that $x$ is a solution of type A if and only if $x+p^{k-\alpha}$ is a solution, and there is a unique solution modulo $p^\alpha$. Thus, there are $p^\alpha$ solutions of type A.

    Since $b=(c-a)x$, any equation of the system for type B solutions can be written as $$(c-a)y=(c-a)xy^2.$$
    Note that any $y$ such that $v_p(y)\geq k-\alpha$ is a solution of type B. There are $p^\alpha$ such solutions, unless $\alpha=k$ since $y$ has to be divisible by $p$. In that case, there are $p^{k-1}$ such solutions.
    
    Suppose that some other $y$ with $v_p(y)<k-\alpha$ is a solution. Take $\sm{a & b \\0 & c}$ such that $v_p(c-a)=\alpha$. Then $v_p((c-a)y)<v_p((c-a)by^2)$, and we reach a contradiction. Thus, there are $p^\alpha$ solutions of type B, unless $\alpha=k$ in which case there are $p^{k-1}$ solutions. In total, there are $2p^\alpha$ fixed subgroups for $\alpha<k$ and $p^k+p^{k-1}$ fixed subgroups for $\alpha=k$.

\noindent \textbf{Case 2.} There are no solutions of type A. 

We say a solution $y$ of type B is \emph{proper} if $v_p(y)<k-\alpha$, so that $(c-a)y\neq 0$ for some $\sm{a & b \\0 & c} \in R$. We now split on two cases depending on whether there are proper solutions of type B.

\noindent \textbf{Case 2.(i)}  There are no proper solutions. 

In this case, all solutions $y$ satisfy $(c-a)y=0$ and $by^2=0$ for all $\sm{a & b \\0 & c}\in \rho$. This is equivalent to $v_p(y)\geq k-\alpha$ and $v_p(y)\geq \frac{k-\beta}{2}$. In total, there are $p^{\min\left(\alpha, \left\lfloor \frac{k+\beta}{2} \right\rfloor \right)}$ solutions.

\noindent \textbf{Case 2.(ii)} There exists a proper solution. 

We first prove that there exists an element $\sm{a & b \\0 & c}\in R$ with $v_p(c-a)=\alpha$ and $v_p(b)=\beta$. Suppose that there is no such element. Take $\sm{a_1 & b_1 \\
    0 & c_1}$ with $v_p(a_1-c_1)=\alpha$ and $\sm{a_2 & b_2 \\ 0 & c_2}$ with $v_p(b_2)=\beta$. We claim that the product $\sm{a & b \\ 0 & c}$ of these elements has $v_p(c-a)=\alpha$ and $v_p(b)=\beta$. Namely, we have $$\sm{a_1 & b_1 \\
     0 & c_1}\sm{a_2 & b_2 \\ 0 & c_2}=\sm{a_1a_2 & a_1b_2+b_1c_2 \\
     0 & c_1c_2}.$$
Then $$c-a=c_1c_2-a_1a_2=c_1(c_2-a_2)+a_2(c_1-a_1)$$ and $$b=a_1b_2+b_1c_2.$$ Since $a_i$ and $c_i$ are invertible, the claim follows from the fact that $v_p(x+y)=v_p(x)$ whenever $v_p(x)<v_p(y)$.

If $y$ is a proper solution, taking $\sm{a & b \\ 0 & c}$ which minimises $v_p(c-a)$ and $v_p(b)$, we obtain $$\alpha+v_p(y)=\beta+2v_p(y)<k,$$
which implies $v_p(y)=\alpha-\beta>0$. Thus, any proper solution has fixed $p$-adic valuation. This means that any other proper solution is of the form $ry$ for some invertible $r \in \Z/p^{k-\alpha+\beta}\Z$, and one has $$(c-a)ry=br^2y^2,$$ which is equivalent with $r\equiv 1 \pmod{p^{k-2\alpha+\beta}}$, which has $p^\alpha$ solutions in $\Z/p^{k-\alpha+\beta}\Z$. Thus, there are $p^\alpha$ proper solutions.

We have already deduced that the number of improper solutions is $p^{\min\left(\alpha, \left\lfloor \frac{k+\beta}{2} \right\rfloor \right)}$. Note that $\min\left(\alpha, \left\lfloor \frac{k+\beta}{2} \right\rfloor \right)<k$, as otherwise there would exist solutions of type A. We now need to prove $\alpha < \frac{k+\beta}{2}$. However, this is equivalent with $2\alpha-\beta<k$, which holds since for a proper solution $y$, one has $v_p((c-a)y)=2\alpha-\beta<k$. Thus, there are also $p^\alpha$ improper solutions, and $2p^\alpha$ solutions in total, which completes the proof.
\end{proof}

\begin{remark}
    Note that one can also arrive to the systems of equations of type A and type B directly. Namely, each cyclic subgroup of order $p^k$ either has a generator of the form $(x, 1)$ or of the form $(1,y)$ for $y$ divisible by $p$. In the first case, the subgroup  is fixed if and only if $\sm{a & b \\ 0 & c}\sm{x \\1}=\sm{s x \\ s},$ which is equivalent with $(c-a)x=b$, and this yields the system of type A. In the second case, the subgroup is fixed if and only if   $\sm{a & b \\ 0 & c}\sm{1 \\y}=\sm{s  \\ s y},$ which is equivalent with $(c-a)y=by^2$, and this yields the system of type B. 
\end{remark}

\begin{remark}
    In Case 1, when there do exist type A solutions, there exists some value $x$ such that $b=(c-a)x$ for every $\sm{a & b \\ 0 & c} \in R$. In that case, the group $R$ is in fact conjugate to the Cartan subgroup, i.e.\ the subgroup of diagonal matrices. To see this, note that $$\m{1 & -x \\ 0 & 1}\m{a & (c-a)x \\ 0 & c }\m{1 & x \\ 0 & 1}=\m{a & 0 \\ 0 & c}.$$
\end{remark}

\begin{remark}
    Throughout the proof, we have used $p$-adic valuation on $\Z/p^k\Z$, which may be problematic as $v_p(0)$ is not well defined. However, the property $v_p(xy)=v_p(x)+v_p(y)$ still holds whenever $xy\neq 0$ which is the only case in which we used it.
\end{remark}

\begin{proposition}
    Each number listed in \Cref{neparno} is achieved for some number field $K$ and some elliptic curve $E/K$.
\end{proposition}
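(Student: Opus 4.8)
The plan is to realize each value in the set
$$\{0\} \cup \{p^j \mid 0\leq j <k\}\cup \{2p^j \mid 0\leq j <k\}\cup \{p^k+p^{k-1}\}$$
by exhibiting, for each target, an explicit subgroup $R \leq \GL_2(\Z/p^k\Z)$ that produces exactly the desired count, and then invoking the construction from the beginning of \Cref{sec3}. Recall that once a subgroup $R$ is specified, one takes an elliptic curve $E/\Q$ with surjective mod $p^k$ representation (e.g.\ \texttt{37.a.1}) and passes to the fixed field $F$ of $\rho^{-1}(R)$, so that $\rho_{E,p^k}(G_F)=R$; this reduces the problem entirely to choosing suitable subgroups. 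Since the proof of \Cref{neparno} computed the count as $A+B$ (numbers of type A and type B solutions) in terms of the invariants $\alpha = \min v_p(c-a)$ and $\beta = \min v_p(b)$ over $\sm{a & b \\ 0 & c}\in R$, I would use that same parametrization in reverse: design $R$ so that $\alpha$, $\beta$ and the existence/non-existence of type A and proper type B solutions land in the case yielding each target value.

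Concretely, the value $0$ is achieved by any $R$ fixing no cyclic subgroup of order $p^k$ — for instance a non-split Cartan subgroup, or simply all of $\GL_2(\Z/p^k\Z)$ itself. For the value $p^k+p^{k-1}$ I would take $R$ to consist of scalar matrices (so $\alpha=\beta=k$), which fixes every cyclic subgroup of order $p^k$; the count of such subgroups is exactly $p^k+p^{k-1}$. For the generic values $2p^j$ with $0\le j<k$, I would build a diagonal (split Cartan) subgroup, or more precisely a subgroup contained in $B_0(p^k)$ with $\alpha=j<k$ and a type A solution present: the proof shows this forces exactly $2p^\alpha=2p^j$ fixed subgroups. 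The single values $p^j$ are the more delicate targets, since the proof shows that a lone $p^\alpha$ arises precisely in the ``no type A solution, no proper type B solution'' regime, where the count is $p^{\min(\alpha,\lfloor (k+\beta)/2\rfloor)}$; I would choose $R$ with no upper-right flexibility forcing $b=(c-a)x$, e.g.\ a subgroup where the off-diagonal behavior is decoupled from the diagonal, tuning $\alpha$ and $\beta$ so that $\min(\alpha,\lfloor(k+\beta)/2\rfloor)$ sweeps out every value $0,1,\dots,k-1$.

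The main obstacle I anticipate is the systematic construction of subgroups realizing the isolated powers $p^j$, and in particular verifying that for these one genuinely stays in the no--type--A regime rather than accidentally creating type A solutions (which would double the count to $2p^j$). The cleanest route is probably to take $R$ generated by a carefully chosen unipotent and/or scalar-plus-nilpotent matrix: for example, a group of matrices of the form $\sm{1 & b \\ 0 & 1}$ with $b$ ranging over $p^\beta \Z/p^k\Z$ makes $\alpha=k$ (so $c-a\equiv 0$) and $\beta$ adjustable, giving count $p^{\min(k,\lfloor(k+\beta)/2\rfloor)}=p^{\lfloor(k+\beta)/2\rfloor}$; letting $\beta$ range produces roughly half of the targets, and a complementary family handles the rest. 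I would then double-check each construction against the exact case analysis of \Cref{neparno} — confirming the values of $\alpha$, $\beta$, the presence or absence of type A solutions, and the properness of type B solutions — to be sure the realized count matches the claimed target, and finally note that each such $R$ is indeed realized over a number field by the fixed-field argument recalled above.
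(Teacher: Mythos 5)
Your overall strategy coincides with the paper's: run the case analysis of \Cref{neparno} in reverse and, for each target value, exhibit a subgroup $R\leq\GL_2(\Z/p^k\Z)$ landing in the corresponding case, then realize $R$ as a mod $p^k$ Galois image over the fixed field of $\rho^{-1}(R)$ for a curve with surjective image. Your constructions for $0$ (a non-split Cartan, or all of $\GL_2(\Z/p^k\Z)$), for $p^k+p^{k-1}$ (scalars), and for $2p^j$ (a subgroup of $B_0(p^k)$ with $\alpha=j<k$ admitting a type A solution, e.g.\ the diagonal group $\langle\sm{1&0\\0&1+p^j}\rangle$, which has the type A solution $x=0$) are all correct. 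Your unipotent family $\{\sm{1&b\\0&1}:b\in p^{\beta}\Z/p^k\Z\}$ is also correctly analysed: it has $\alpha=k$, no type A and no proper type B solutions, and yields the count $p^{\lfloor(k+\beta)/2\rfloor}$, hence the lone powers $p^j$ for $\lfloor k/2\rfloor\leq j\leq k-1$.

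The genuine gap is the remaining range $0\leq j<\lfloor k/2\rfloor$ (nonempty once $k\geq 2$), which you delegate to an unspecified ``complementary family''. This is exactly the delicate regime you flag yourself: one must force $\alpha=j$ while simultaneously destroying all type A solutions, and the two families you do write down cannot do it --- a diagonal group with $\alpha=j$ always has the type A solution $x=0$ and therefore gives $2p^j$, while the unipotent family is locked at $\alpha=k$ and so cannot produce any $p^j$ with $j<\lfloor k/2\rfloor$. The paper closes this by taking two generators with the same diagonal but different upper-right entries: $R=\bigl\langle\sm{1&0\\0&1+p^j},\,\sm{1&p^j\\0&1+p^j}\bigr\rangle$ for $0<j<k$, and the full group of upper triangular matrices for $j=0$. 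For the two-generator group the type A equations $p^jx=0$ and $p^jx=p^j$ are inconsistent, so there are no type A solutions; one checks $\alpha=\beta=j$ on the whole group, which rules out proper type B solutions (those require $\alpha>\beta$), and the improper count is $p^{\min(\alpha,\lfloor(k+\beta)/2\rfloor)}=p^{j}$ because $j\leq\lfloor(k+j)/2\rfloor$. Supplying this (or an equivalent) construction is necessary to complete the proof; without it the values $p^j$ for $0\leq j<\lfloor k/2\rfloor$ remain unrealized.
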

\begin{proof} We keep the same notation from \Cref{neparno}. It suffices to find a subgroup $R$ which fixes a given number of subgroups, and then we can reconstruct a field and an elliptic curve as explained before. If $R$ is a subgroup consisting only of scalar matrices, then the number $p^k+p^{k-1}$ is achieved, and vice versa. Note that it is enough to check that all generators of $R$ fix a subgroup in order for the entire $R$ to fix it. 

If $R$ is not conjugate to a subgroup of the group of all uppertriangular matrices, for example if $R=\GL_2(\Z/p^k\Z)$, then $0$ is achieved. 

    Now fix $j\in \{0,\ldots, k-1\}$, and take $R$ to be the cyclic subgroup generated by $\sm{1 & p^j \\
    0 & p^j+1}$. The type A equation is $p^jx=p^j$, which has $p^j$ solutions. The type B equation is $p^jy=p^j y^2$, which also has $p^j$ solutions and we get $2p^j$ solutions in total. This settles the case of $2p^j$ for $j\in \{0, 1, \ldots, k-1\}$. 
    
    It remains to prove that each power $p^j$ for $j=0, \ldots, k-1$ can appear. To do this, we must ensure there are no solutions of type A and no proper solutions of type B, and $\min(\alpha, \lfloor (k+\beta)/2 \rfloor)$ should be $j$.

    For the case $j=0$, take $R$ to be the group of all uppertriangular matrices. Then $\alpha=0$ and there are no solutions of type A, no proper solutions of type B and only $y=0$ is an improper solution of type B.

    Now let $0<j<k$. Take $R$ to be the subgroup generated by $\sm{1 & 0 \\ 0 & 1+p^j}$ and $\sm{1 & p^j \\ 0 & 1+p^j}$. There are no type A solutions, and the type B equations are $p^j y=p^j y^2$ and $p^j y=0$. There are $p^j$ solutions. This completes the proof.
\end{proof}
The case $p=2$ is similar for the most part but the answer is slightly different, so we handle it separately.

\begin{proposition}
    We have $S(B_0(2))=\{0,1,3\}$. For $k>1$, $$S(B_0(2^k))=\{0\} \cup \{2^m \mid m=1,\ldots, k\}\cup \{2^k+2^{k-1}\}.$$ 
\end{proposition}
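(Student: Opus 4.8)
The plan is to run the same solution-counting machinery developed in the proof of \Cref{neparno} and then to isolate the single feature of the prime $2$ that forces a different answer. As in the odd case, a subgroup $R$ that fixes no cyclic subgroup of order $2^k$ contributes $0$, and otherwise we may conjugate so that $R\subseteq B_0(2^k)$. Writing $g=\sm{u & v \\ w & z}$, the condition $gRg^{-1}\subseteq B_0(2^k)$ reduces verbatim to \eqref{uvjet}, which splits into the type A system $(c-a)x=b$ (for $w$ odd) and the type B system $(c-a)y=by^2$ with $y\in 2\Z/2^k\Z$ (for $w$ even); each solution lies below exactly $\#B_0(2^k)$ matrices $g$ by the same completion count, which is prime-independent. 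Every ingredient of the subsequent case analysis — the coset description of the solution sets, the identity $v_2(x+y)=v_2(x)$ for $v_2(x)<v_2(y)$, and the product argument realising the minima $\alpha$ and $\beta$ simultaneously — carries over without change.

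The one genuinely new input is that in $B_0(2^k)$ both diagonal entries are units, hence odd, so $c-a$ is even for every $\sm{a & b \\ 0 & c}\in R$. Consequently $\alpha:=\min_R v_2(c-a)\ge 1$, in contrast with the odd case where $\alpha=0$ is permitted. I would feed this inequality into the outcomes catalogued in \Cref{neparno}: when a type A solution or a proper type B solution exists the total is $2\cdot 2^\alpha$ with $1\le\alpha\le k-1$, giving the values $2^2,\dots,2^k$; the scalar case $\alpha=\beta=k$ gives $2^k+2^{k-1}$; and in the remaining case the total is $2^{\min(\alpha,\lfloor(k+\beta)/2\rfloor)}$. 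Because $\alpha\ge 1$, this last exponent vanishes precisely when $\lfloor(k+\beta)/2\rfloor=0$, i.e.\ when $k=1$ and $\beta=0$. Hence for $k>1$ the exponent is always at least $1$, the value $2^0=1$ never occurs, and every contribution lies in $\{0\}\cup\{2^m\mid 1\le m\le k\}\cup\{2^k+2^{k-1}\}$; whereas for $k=1$ the value $1$ does arise.

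For the reverse inclusion when $k>1$ I would exhibit explicit subgroups, adapting the constructions of the odd case so that the diagonal difference is even: $R=\GL_2(\Z/2^k\Z)$ realises $0$; the scalar subgroups realise $2^k+2^{k-1}$; the cyclic group generated by $\sm{1 & 2^{m-1} \\ 0 & 1+2^{m-1}}$ has $\alpha=m-1$ and realises $2^m$ for $2\le m\le k$; and a two-generator group such as $\langle \sm{1 & 0 \\ 0 & 3},\,\sm{1 & 2 \\ 0 & 3}\rangle$, with $\alpha=\beta=1$, no type A solution and no proper type B solution, realises $2^1=2$. The case $k=1$ I would settle directly: here $B_0(2)$ has order $2$ and the count equals the number of points of $\PP^1(\F_2)$ fixed by $R\le\GL_2(\F_2)\cong S_3$, so the identity, a transposition, and a subgroup of order $3$ (or all of $S_3$) give $3$, $1$ and $0$ respectively, yielding $S(B_0(2))=\{0,1,3\}$.

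The bulk of the work is bookkeeping inherited from \Cref{neparno}; the conceptual crux, and the only place where the argument genuinely diverges from the odd prime case, is the observation $\alpha\ge 1$ together with the resulting behaviour of $\lfloor(k+\beta)/2\rfloor$, which is exactly what both removes the value $1$ for $k>1$ and explains why $k=1$ must be handled on its own. The main care needed in the achievability direction is to check that the two boundary values $2^1$ and $2^k$ are hit, since these sit at the edges where the type A/B and the remaining case overlap.
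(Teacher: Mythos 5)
Your proposal is correct and follows essentially the same route as the paper: carry over the type A/type B solution count from the odd case, observe that $c-a$ is always even so $\alpha\ge 1$, and deduce that $2^0$ drops out for $k>1$ while $2^1$ (rather than $2^0$) enters, with the same witnesses for achievability (your two-generator witness for $2^1$ and the direct $S_3$ computation for $k=1$ differ cosmetically from the paper's choices but are equally valid). If anything, your explicit treatment of the $k=1$ subcase via fixed points on $\PP^1(\F_2)$ is cleaner than the paper's corresponding remark.
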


\begin{proof}
    The entire proof of \Cref{neparno} is also valid for $p=2$. The only difference is that one cannot have $\alpha=0$, as $c-a$ is always divisible by $2$. 

    If there exists a type A solution and $\alpha<k$, then there are $2^\alpha$ type A solutions, and $2^\alpha$ type B solutions, which gives $2^{\alpha+1}$ solutions in total.

    If there are no type A solutions and no proper solutions, then there are $2^{\min\left(\alpha, \left \lfloor \frac{k+\beta}{2}\right \rfloor\right)}$ solutions in total. If $k>1$, this number is one of $2^1, 2^2, \ldots, 2^k$.  If $k=1$, this number is one of $2^0, 2^1$. However, if $k=1$, then $\beta=1$ implies that there is a proper solution, so $2^1$ is impossible in that case.

    Finally, if there are no type A solutions and there is a proper type B solutions, then there are $2^{\alpha+1}$ solutions in total, unless $\alpha=k$ in which case there are $2^k+2^{k-1}$ solutions in total.

    Now let us comment why each number can be achieved. For $k=1$, this is well known. Let $k>1$. We wish to prove that each of $0,2^1,2^2, 2^3,\ldots, 2^k, 2^k+2^{k-1}$ can be achieved. The numbers $0$ and $2^k+2^{k-1}$ are obviously achieved by taking $R$ to be $\GL_2(\Z/2^k\Z)$ and the trivial group respectively.

    Taking $R$ to be the entire $B_0(2^k)$, the type A system is $(c-a)x=b$ for all odd $a,c$ and for all $b$, which has no solution. The type B system is $(c-a)y=by^2$ for all odd $a,c$ and for all $b$. In particular, for $b=0$, one has $(c-a)y=0$, which has $2$ solutions: $0$ and $2^{k-1}$. Both of these numbers are solutions of the entire system, so $2^1$ can be achieved.

    Now let $1\leq j<k$, and take $R$ to be generated by $\sm{1 & 2^j \\
    0 & 1+2^j}$. The type A equation is $2^jx=2^j$, which has $2^j$ solutions, and the type B equation is $2^jy=2^jy^2$, which also has $2^j$ solutions (remember, $y$ has to be divisible by $2$ to be a solution of type B). In total, we obtain $2^{j+1}$ solutions, as claimed.
\end{proof}

As a consequence, it is now easy to determine the possible number of $n$-isogenies of an elliptic curve for any $n$.

\begin{corollary}
    Let $n=\prod_{i=1}^r p_i^{e_i}$ be a positive integer. Then $S(B_0(n))$ is equal to the set of all products of the form $$x_1x_2 \ldots x_r, \ \text{ where } x_i \in S(B_0(p_i^{e_i})).$$
\end{corollary}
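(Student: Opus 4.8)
The plan is to reduce everything to the Chinese Remainder Theorem together with the observation, already implicit in the proof of \Cref{neparno}, that for $H=B_0(N)$ the quantity $\frac{\#\{g \in \GL_2(\Z/N\Z) \mid gRg^{-1}\subseteq H\}}{\#H}$ is simply the number of cyclic subgroups of order $N$ of $(\Z/N\Z)^2$ fixed by $R$. Indeed, $B_0(N)$ is exactly the stabilizer in $\GL_2(\Z/N\Z)$ of the cyclic subgroup $\langle(1,0)\rangle$, and $\GL_2(\Z/N\Z)$ acts transitively on the set of all cyclic subgroups of order $N$ (equivalently, on primitive vectors). An orbit--stabilizer count then shows that $g \mapsto g^{-1}\langle(1,0)\rangle$ is an $\#B_0(N)$-to-one map from $\{g \mid gRg^{-1}\subseteq B_0(N)\}$ onto the set of fixed cyclic subgroups, so the displayed expression equals that count; I will write $f(R)$ for this number.

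First I would fix the isomorphism $\GL_2(\Z/n\Z) \cong \prod_{i=1}^r \GL_2(\Z/p_i^{e_i}\Z)$ coming from CRT, under which $B_0(n)$ corresponds to $\prod_i B_0(p_i^{e_i})$ and $(\Z/n\Z)^2$ decomposes as $\prod_i (\Z/p_i^{e_i}\Z)^2$. Every cyclic subgroup $C$ of order $n$ is then uniquely a product $C = \prod_i C_i$ with $C_i$ cyclic of order $p_i^{e_i}$ (its $p_i$-primary part), and this yields a bijection between cyclic order-$n$ subgroups and such tuples. Writing $r=(r_1,\ldots,r_r)$ for the image of $r \in R$, one has $rC = \prod_i r_i C_i$, so $rC=C$ if and only if $r_i C_i = C_i$ for every $i$.

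The heart of the argument is the multiplicativity formula $f(R) = \prod_{i=1}^r f(\pi_i(R))$, where $\pi_i(R) \leq \GL_2(\Z/p_i^{e_i}\Z)$ is the projection of $R$ onto the $i$-th factor. The key step is the equivalence: $R$ fixes $C = \prod_i C_i$ if and only if $\pi_i(R)$ fixes $C_i$ for every $i$. The forward direction is immediate, since each $i$-th component of an element of $R$ fixes $C_i$ and these components range exactly over $\pi_i(R)$; the backward direction follows because $rC = \prod_i r_i C_i = \prod_i C_i = C$ once each $r_i \in \pi_i(R)$ fixes $C_i$. Consequently the fixed order-$n$ subgroups correspond bijectively to tuples $(C_i)$ in which each $C_i$ is fixed by $\pi_i(R)$, and counting such tuples factors as $\prod_i f(\pi_i(R))$.

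With the formula in hand, both inclusions are routine. For $S(B_0(n)) \subseteq \{x_1\cdots x_r\}$, any subgroup $R$ gives $f(R) = \prod_i f(\pi_i(R))$ with each $f(\pi_i(R)) \in S(B_0(p_i^{e_i}))$. For the reverse inclusion, given $x_i = f(R_i) \in S(B_0(p_i^{e_i}))$ for subgroups $R_i \leq \GL_2(\Z/p_i^{e_i}\Z)$, I would take $R = \prod_i R_i$ inside the product group; then $\pi_i(R) = R_i$ and the formula gives $f(R) = \prod_i x_i$. The step I expect to require the most care is the multiplicativity equivalence, precisely because a general $R$ need not be a product of its projections (by Goursat's lemma they may be nontrivially glued). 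The point to stress is that fixing a \emph{product} subgroup $C$ is a coordinatewise condition, so only the projections $\pi_i(R)$, and not the way they are entangled inside $R$, affect the count.
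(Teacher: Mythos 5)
Your proof is correct and follows essentially the same route as the paper, which also reduces to the multiplicativity of the fixed-subgroup count over prime powers together with the CRT construction of a subgroup with prescribed projections. You simply make explicit the two steps the paper leaves implicit (the orbit–stabilizer identification of the normalized count with the number of fixed cyclic subgroups, and the fact that fixing a product subgroup depends only on the projections $\pi_i(R)$).
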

\begin{proof}
    This follows immediately from the fact that the number of $n$-isogenies is the product of the number of $p_i^{e_i}$-isogenies for $i=1,\ldots, r$ and the fact that for given subgroups $H_i\leq \GL_2(\Z/p_i^{e_i}\Z)$, there is a subgroup $H\leq \GL_2(\Z/N\Z)$ such that the reduction of $H$ modulo $p_i^{e_i}$ equals $H_i$. The second fact follows from the Chinese remainder theorem. 
\end{proof}
Now suppose that we are given an elliptic curve $E$ with a fixed number of $p^k$-isogenies. It is natural to ask how many $p^{k+1}$-isogenies $E$ can have. If we go through the proof of \Cref{neparno} more carefully, we can recover this information as well. 

For $m\in S(B_0(p^k))$, denote by $S_{k\to k+1}(m)$ the set of possible values for the number of $p^{k+1}$-isogenies of an elliptic curve whose number of $p^k$-isogenies is $m$. 

\begin{proposition}
    Let $p$ be an odd prime and $k$ a positive integer. Then \begin{itemize}
        \item $S_{k\to k+1}(p^k+p^{k-1})=\{0, p^k, 2p^k, p^{k+1}+p^k\}$, \ $S_{k\to k+1}(0)=\{0\}$.
        \item For $0\leq j<k$, \ $S_{k\to k+1}(2p^j) = \{0, p^j, 2p^j\}$.
        \item For $0\leq j<\frac{k-1}{2}$, \ $S_{k\to k+1}(p^j)=\{0, p^j\}$.
        \item For $j=\frac{k-1}{2}$, \ $S_{k \to k+1}(p^j)=\{0, p^j, p^{j+1}\}$.
        \item For $\frac{k}{2}\leq j<k$, \ $S_{k \to k+1}(p^j)=\{0, p^j, p^{j+1}, 2p^j\}$. 
    \end{itemize} 
\end{proposition}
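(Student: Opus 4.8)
The plan is to reuse the explicit count from the proof of \Cref{neparno}, now comparing level $k$ with level $k+1$. First I would dispose of $m=0$: reduction modulo $p^k$ sends a cyclic subgroup of order $p^{k+1}$ fixed by $R\le\GL_2(\Z/p^{k+1}\Z)$ to a fixed cyclic subgroup of order $p^k$, and this assignment is Galois-equivariant, so if $R$ fixes no cyclic $p^k$-subgroup it fixes no cyclic $p^{k+1}$-subgroup; hence $m=0$ forces $m'=0$, and conversely $m'=0$ is always attainable (adjoin to $R$ a matrix $I+p^kM$ with $M$ having no eigenvector modulo $p$, which does not change $\bar R$ but unfixes every fibre). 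For $m\neq 0$ I may assume $R\subseteq B_0(p^{k+1})$ and let $\bar R\subseteq B_0(p^k)$ be its reduction, which computes $m$. Writing $\alpha,\beta$ and $\alpha',\beta'$ for the minima of $v_p(c-a)$ and $v_p(b)$ over the group, computed modulo $p^k$ and modulo $p^{k+1}$ respectively (valuations capped at the level), the basic bookkeeping is $\alpha=\min(\alpha',k)$ and $\beta=\min(\beta',k)$. The decisive consequence is that when $\alpha<k$ (i.e. $m\neq p^k+p^{k-1}$) the identity \emph{pins} $\alpha'=\alpha$, so a lift's only freedom lies in $\beta'$ and in whether type A / proper type B solutions survive.

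Next I would re-examine the two systems of \Cref{neparno} across the two levels. The compatibility facts I expect to need are: (i) a type A solution at level $k+1$ reduces to one at level $k$, so type A can only be \emph{lost} when lifting and never gained; and (ii) a proper type B solution at level $k+1$ has $v_p(y)=\alpha'-\beta'$ and satisfies $2\alpha'-\beta'<k+1$, and its reduction is again a \emph{proper} type B solution at level $k$ \emph{unless} $2\alpha'-\beta'=k$ exactly, in which case it becomes a boundary (improper) solution at level $k$. Facts (i) and (ii) are exactly what turn the level-$k$ hypothesis on $m$ into admissibility constraints on $(\alpha',\beta',\text{type})$, and hence on $m'$.

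The argument then splits on the value of $m$. For $m=p^k+p^{k-1}$ one has $\alpha=\beta=k$, so $\alpha',\beta'\in\{k,k+1\}$; running the four combinations through the level-$(k+1)$ formula yields precisely $p^{k+1}+p^k$ (scalar lift), $2p^k$ (type A survives), $p^k$ (only improper type B), and $0$. For $m=2p^j$ with $j<k$ one has $\alpha'=\alpha=j$, and since a type A solution gives $b=(c-a)x$ and hence $\beta\geq\alpha=j$, the floor term $\lfloor(k+1+\beta')/2\rfloor$ stays $\geq j$; thus $m'=2p^j$ when type A or proper type B persists and $m'=p^j$ otherwise, giving $\{0,p^j,2p^j\}$. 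The delicate part is $m=p^j$, where $\min(\alpha,\lfloor(k+\beta)/2\rfloor)=j$ may be realized either by $\alpha=j$ (then $\alpha'=j$ is pinned and no increase occurs) or by the floor term with $\alpha>j$. In the latter regime, taking $\beta'=2j+1-k$ makes $\lfloor(k+1+\beta')/2\rfloor=j+1$ while keeping the level-$k$ floor equal to $j$, so $m'=p^{j+1}$; this needs $\beta'\geq 0$, i.e. $j\geq(k-1)/2$. Creating instead a proper type B solution at level $k+1$ in the exceptional case $2\alpha'-\beta'=k$ forces $\beta'=2j-k\geq 0$, i.e. $j\geq k/2$, and gives $m'=2p^j$. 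Checking which of $p^{j+1}$ and $2p^j$ survive produces exactly the three regimes $j<(k-1)/2$, $j=(k-1)/2$, and $k/2\leq j<k$.

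Finally I would exhibit generators realizing each listed value, as in the constructions after \Cref{neparno} (cyclic $R$ generated by matrices such as $\sm{1 & p^j \\ 0 & 1+p^j}$ promoted to level $p^{k+1}$, and the matrix $I+p^kM$ above to produce $0$). The main obstacle is the \textbf{upper bound} carried out in the previous paragraph: one must show no values beyond those listed occur, and this rests on getting the two thresholds exactly right — that $\lfloor(k+1+\beta')/2\rfloor=j+1$ requires $\beta'=2j+1-k\geq 0$ (hence $j\geq(k-1)/2$), and that a level-$(k+1)$ proper type B solution escapes being proper at level $k$ only when $2\alpha'-\beta'=k$ (hence $\beta'=2j-k\geq 0$, $j\geq k/2$). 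Confirming that the pinning $\alpha'=\alpha$ together with the bound $\beta'\leq 2j+1-k$ leaves no avenue for the count to reach $p^{j+2}$ or $2p^{j+1}$, and verifying the boundary cases $j$ near $(k-1)/2$ and $k/2$, is the crux of the proof.
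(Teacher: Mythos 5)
Your proposal follows essentially the same route as the paper: reduce to the type A / type B solution counts of \Cref{neparno}, track how $\alpha$, $\beta$ and the solution types behave under reduction from level $k+1$ to level $k$, split on the value of $m$, and exhibit explicit generators; the thresholds $j\geq (k-1)/2$ and $j\geq k/2$ you isolate are exactly the ones the paper's argument turns on. One small point to patch when writing it up: in the case $m=2p^j$ you justify $\lfloor(k+1+\beta')/2\rfloor\geq j$ only via a type A solution (which gives $\beta\geq\alpha$), whereas $m=2p^j$ can also arise from a proper type B solution with no type A, where $\beta<\alpha=j$; there the properness bound $\beta>2\alpha-k$ still forces $\lfloor(k+1+\beta')/2\rfloor\geq j+1$, so the conclusion stands.
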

\begin{proof}
    Let us fix some notation. For a subgroup $R$ of $\GL_2(\Z/p^{k+1}\Z)$, denote by $R'$ the image of $R$ under the projection $\pi:\GL_2(\Z/p^{k+1}\Z)\to \GL_2(\Z/p^k\Z)$. 

    Note that the number of $p^{k+1}$-isogenies can always be $0$, if we take $R=\pi^{-1}(R')$, so that $R$ contains all matrices congruent to the identity modulo $p^k$. Now suppose that the number of $p^{k+1}$-isogenies is at least $1$, and let $R \leq B_0(p^{k+1})$. Again, let $\alpha=\min_R(v_p(c-a))$ and $\beta=\min_R(v_p(b)).$ 

    If the number of $p^k$-isogenies is $p^k+p^{k-1}$, then by the proof of \Cref{neparno}, it follows that $\alpha$ and $\beta$ are at least $k$ and every matrix in $R'$ is scalar. If $\alpha=k+1$, and $\beta=k$, there are only improper solutions of type B equations, and hence $p^k$ solutions in total. If $\alpha=\beta=k+1$, there are $p^{k+1}+p^k$ solutions. If $\alpha=k$ and $\beta=k+1$, there are $p^k$ solutions of type A and hence $2p^k$ solutions in total. If $\alpha=\beta=k$, there are either $p^k$ solutions of type A and $p^k$ solutions of type B, or there are $p^k$ improper solutions of type B. 
    
    To prove that each possibility arises, take $R$ to be trivial to obtain $p^{k+1}+p^k$, generated by $\sm{1 & p^k \\ 0 & 1}$ to obtain $p^k$, generated by $\sm{1 & 0 \\ 0 & 1+p^k}$ to obtain $2p^k$.

    Suppose the number of $p^k$-isogenies is $2p^j$ for $0\leq j <k$. Then there are either solutions of type A or proper solutions of type B. In any case, one has $\alpha=j$, $\beta\leq \alpha$ and it follows from the proof of \Cref{neparno} that the number of $p^{k+1}$-isogenies is either $p^j$ or $2p^j$. The number $2p^j$ is achieved by taking $R$ generated by $\sm{1 & 0 \\ 0 & 1+p^j}$. The number $p^j$ is achieved by taking $R$ generated by $\sm{1 & p^k \\ 0 & 1+p^j}$ and $\sm{1 & 0 \\ 0 & 1+p^j}$.

    Finally, suppose that the number of $p^k$-isogenies is $p^j$ for $j<k$. Then there are no type A solutions mod $p^k$, so there are also no type A solutions mod $p^{k+1}$. Furthermore, all solutions mod $p^k$ satisfy $(c-a)y=by^2=0 \pmod {p^k}$. 
    
    Suppose that all solutions mod $p^{k+1}$ are improper. Then, there are $p^{\min(\alpha, \lfloor (k+1+\beta)/2 \rfloor)}$ solutions. If $j\geq \frac{k-1}{2}$, taking $\alpha=k+1$, and $\beta=2j+1-k$ so that $\lfloor (k+\beta)/2\rfloor=j$ and $\lfloor (k+\beta+1)/2\rfloor=j+1$ gives $p^{j+1}$ solutions. This can be achieved if $R$ is generated by $\sm{1 & p^{2j+1-k} \\ 0 & 1}.$  
    
    If  $\alpha\leq \frac{k+\beta}{2}$, then $j=\alpha$, and we get $p^j$ solutions.  This can be achieved if $R$ is generated by $\sm{1 & p^{k-1} \\
    0 & p^j+1}$ and $\sm{1 & 0 \\ 0 & p^j+1}$.

    Now suppose that there exist proper solutions mod $p^{k+1}$. Then $\alpha<\frac{\beta+k+1}{2}$, and hence $\alpha=j$. Any proper solution satisfies $v_p(y)=\alpha-\beta$. Since there are no proper solutions mod $p^k$, we must have $(c-a)y=by^2\equiv 0 \pmod{p^k}$ for any solution, and taking $v_p$ yields $2\alpha-\beta=k$, so $j=\alpha=\frac{k+\beta}{2}\geq \frac{k}{2}$. This can be achieved by taking $R$ generated by $\sm{1 & p^{2j-k} \\ 0 & 1+p^j}$. In this case, we get $2p^j$ solutions.
\end{proof}


\begin{remark}
    Note that one can also think about the problem of $p^k$-isogenies geometrically. There is a bijection between cyclic subgroups of $(\Z/p^k\Z)^2$ of order $p^k$ and points on $\PP^1(\Z/p^k\Z)$. The group $\PP \GL_2(\Z/p^k\Z)$ acts on $\PP^1(\Z/p^k\Z)$ by $$\sm{a & b \\c & d}(x:y)=(ax+by: cx+dy).$$ 
    For $k=1$, these are projective linear transformations on the projective line $\PP^1(\F_p)$, and any such transformation that fixes three points fixes all points, which is the reason for there only being $0,1,2$ or $p+1$ possible $p$-isogenies. As we saw in our results, this is not true for $k>1$. However, there is a sense in which this generalizes, as shown by the next proposition.
\end{remark}

\begin{proposition}\label{ako 3 onda sve}
Let $p$ be an odd prime. Suppose that a matrix $\sm{a & b \\ c & d}\in \PP \GL_2(\Z/p^k\Z)$ fixes three points on $\PP^1(\Z/p^k\Z)$ whose reductions modulo $p$ on $\PP^1(\F_p)$ are distinct. Then $\sm{a & b \\ c & d}$ fixes all points on $\PP^1(\Z/p^k\Z)$.    
\end{proposition}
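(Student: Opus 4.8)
The plan is to reduce the claim about a projective matrix fixing three points with distinct reductions mod $p$ to a statement about the matrix being a scalar, and then lift through the powers of $p$ inductively. A point $(x:y)\in\PP^1(\Z/p^k\Z)$ is fixed by $M=\sm{a & b \\ c & d}$ exactly when $M\sm{x\\y}=\lambda\sm{x\\y}$ for some unit $\lambda\in(\Z/p^k\Z)^\times$, i.e.\ when $\sm{x\\y}$ is an eigenvector of $M$. So the hypothesis says $M$ has three eigenvectors whose reductions mod $p$ are pairwise distinct in $\PP^1(\F_p)$. First I would reduce everything modulo $p$: over the field $\F_p$, a nonscalar $2\times 2$ matrix can have at most two distinct eigenlines (the characteristic polynomial has at most two roots, and each eigenvalue contributes a single eigenline unless the matrix is scalar). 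Hence three distinct fixed points mod $p$ force $\overline{M}$ to be scalar, say $\overline{M}=\overline{\lambda_0}I$ in $\PGL_2(\F_p)$.

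Next I would exploit that $M$ is scalar mod $p$ to write $M=\lambda_0 I + pN$ for some lift $\lambda_0$ of a unit and some matrix $N$ over $\Z/p^k\Z$ (working with a representative of the projective class, which I may rescale so that $\lambda_0$ is a unit). The goal is to show $N\equiv 0$, i.e.\ that $M$ is scalar in $\PGL_2(\Z/p^k\Z)$, because a scalar matrix fixes every point of $\PP^1$. The three fixed eigenvectors, call them $v_1,v_2,v_3$, satisfy $Nv_i=\mu_i v_i$ for appropriate scalars $\mu_i$ (the eigenvalue condition $Mv_i=\lambda_iv_i$ transfers to $N$). Because the $v_i$ have pairwise distinct reductions mod $p$, any two of them already form a basis of $(\Z/p^k\Z)^2$: two vectors in $(\Z/p^k\Z)^2$ whose mod-$p$ reductions are $\F_p$-linearly independent span the free module. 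So $N$ is simultaneously diagonalized by, say, $v_1,v_2$ with eigenvalues $\mu_1,\mu_2$, and the third relation $Nv_3=\mu_3 v_3$ with $v_3$ having a distinct reduction forces $\mu_1=\mu_2$ (a nonscalar diagonalizable matrix has exactly two eigenlines, and $v_3$ would have to coincide mod $p$ with one of the first two). Thus $N$ acts as a single scalar $\mu_1$, so $N$ itself is scalar, and therefore $M=(\lambda_0+p\mu_1)I$ is scalar and fixes all of $\PP^1(\Z/p^k\Z)$.

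The step I expect to require the most care is the transition from ``$N$ fixes the three lines'' to ``$N$ is scalar'' over the ring $\Z/p^k\Z$ rather than a field, since the ring is not a domain and the usual eigenvalue counting over fields does not apply verbatim. The clean way around this is to avoid arguing directly over $\Z/p^k\Z$ and instead set up an induction on $k$: assuming the result mod $p^{k-1}$, lift a matrix fixing three suitably distinct points mod $p^k$ by writing it as $(\text{scalar})+p^{k-1}N_0$ with $N_0$ defined over $\F_p$, and then show the three eigenvector conditions mod $p^k$ force $N_0$ to be scalar over $\F_p$ by exactly the field-level argument of the base case. Concretely, if $M$ is scalar mod $p^{k-1}$ by the inductive hypothesis, then $M=\lambda I+p^{k-1}N_0$, and the eigenvector equation $Mv_i=\lambda_i v_i$ reduces mod $p$ (after dividing the correction term by $p^{k-1}$) to $N_0\bar v_i=\bar\mu_i\bar v_i$ for three distinct lines in $\PP^1(\F_p)$, whence $N_0$ is scalar mod $p$ and $M$ is scalar mod $p^k$. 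This inductive packaging localizes all the genuine linear algebra to the field $\F_p$ and sidesteps the difficulties of eigenvalue arguments over $\Z/p^k\Z$.
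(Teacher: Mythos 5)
Your argument is correct, but it takes a genuinely different route from the paper. The paper's proof is a single direct computation over $\Z/p^k\Z$: conjugate the matrix so that one fixed point is $(1:0)$, write the other two fixed points as $(x:1)$ and $(y:1)$ with $x-y$ a unit, extract the equations $(c-a)x=b$ and $(c-a)y=b$, and subtract to get $(c-a)(x-y)=0$, which kills $c-a$ and then $b$ at once — no induction and no reduction to $\F_p$. Your approach instead inducts on $k$, using the inductive hypothesis to write $M=\lambda I+p^{k-1}N_0$ and then running the eigenline count for $N_0$ over the field $\F_p$. Both are valid; the paper's is shorter, while yours isolates all the linear algebra over a field and makes transparent exactly where the hypothesis of distinct reductions mod $p$ enters. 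One caution: your middle paragraph, taken literally, is flawed — from $pNv_i=p\mu_i v_i$ over $\Z/p^k\Z$ you may only conclude $Nv_i\equiv\mu_i v_i\pmod{p^{k-1}}$, not an exact eigenvector relation, and a diagonal matrix $\operatorname{diag}(\mu_1,\mu_2)$ with $\mu_1\neq\mu_2$ but $\mu_1-\mu_2$ a non-unit can have eigenlines beyond the two coordinate axes. You correctly identified this as the weak point and your inductive repackaging in the final paragraph repairs it, since there the correction term sits at level $p^{k-1}$ and the relation $N_0\bar v_i=\bar\mu_i\bar v_i$ genuinely holds over $\F_p$; in a written-up version you should simply delete the middle paragraph's direct argument and present only the induction.
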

\begin{proof}
    Note that the reduction mod $p$ map $\PP^1(\Z/p^k\Z)\to \PP^1(\F_p)$ is well defined: any point $(x:y)\in \PP^1(\Z/p^k\Z)$ can be written either as $(x':1)$ or $(1:y')$. In that case, the reduction mod $p$ is defined as $(x' \pmod p:1)$ or $(1:y' \pmod p)$.

    Since $\sm{a & b \\ c & d}$ fixes at least one point, we can conjugate it to a matrix of the form $\sm{a & b \\ 0 & c}$, so that it fixes the point $(1:0)$. Then, by the condition, it also fixes two points of the form $(x:1)$ and $(y:1)$ with $x\not \equiv y \pmod p$. We then have $(ax+b:c)=(x:1)$ and $(ay+b:c)=(y:1)$, so $(c-a)y=(c-a)x=b$, from where it follows that $a=c$ and $b=0$, and hence the matrix is scalar, so it fixes all points.
\end{proof}
Rephrasing in terms of isogenies, we obtain the following.
\begin{corollary}
    Let $p$ be an odd prime and $k$ a positive integer. Let $E$ be an elliptic curve over a number field $K$, and suppose that $E$ has three cyclic $p^k$-isogenies over $K$, with kernels $C_1, C_2, C_3$. If $C_i \cap C_j$ is trivial for $1\leq i<j\leq 3$, then $E$ has all $p^k+p^{k-1}$ isogenies defined over $K$.
\end{corollary}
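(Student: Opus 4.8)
The plan is to reduce the statement to \Cref{ako 3 onda sve} via the dictionary between cyclic $p^k$-isogenies and points of $\PP^1(\Z/p^k\Z)$. As recalled in the remark preceding that proposition, cyclic subgroups of order $p^k$ of $(\Z/p^k\Z)^2$ are in bijection with points of $\PP^1(\Z/p^k\Z)$, a subgroup $C=\langle(x,y)\rangle$ with $(x,y)$ of order $p^k$ corresponding to the point $(x:y)$. Fixing a basis for $E[p^k]$, write $R=\rho_{E,p^k}(G_K)\leq\GL_2(\Z/p^k\Z)$. A cyclic $p^k$-isogeny is defined over $K$ exactly when its kernel is a Galois-stable cyclic subgroup of order $p^k$, which translates to the corresponding point of $\PP^1(\Z/p^k\Z)$ being fixed by the image of every $r\in R$ in $\PP\GL_2(\Z/p^k\Z)$. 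Let $P_1,P_2,P_3$ be the points attached to $C_1,C_2,C_3$; by hypothesis each $P_i$ is fixed by this image of $R$.

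The key step is to show that the triviality of all three pairwise intersections $C_i\cap C_j$ is equivalent to the reductions of $P_1,P_2,P_3$ modulo $p$ in $\PP^1(\F_p)$ being pairwise distinct. First I would observe that any nontrivial subgroup of the cyclic $p$-group $C_i$ must contain its unique subgroup of order $p$, namely $C_i[p]=\langle p^{k-1}(x_i,y_i)\rangle$; hence $C_i\cap C_j$ is nontrivial precisely when $C_i[p]=C_j[p]$. Next, under the isomorphism $p^{k-1}(\Z/p^k\Z)^2\cong(\Z/p\Z)^2$ sending $p^{k-1}(a,b)$ to $(a,b)\bmod p$, the generator $p^{k-1}(x_i,y_i)$ corresponds to $(x_i,y_i)\bmod p$, so $C_i[p]=C_j[p]$ is equivalent to $(x_i:y_i)$ and $(x_j:y_j)$ having the same image in $\PP^1(\F_p)$. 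Combining, $C_i\cap C_j$ is trivial if and only if $P_i$ and $P_j$ have distinct reductions modulo $p$.

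With this translation, the hypothesis that all three pairwise intersections are trivial says exactly that $P_1,P_2,P_3$ have pairwise distinct reductions in $\PP^1(\F_p)$. For each $\sigma\in G_K$, the image of $\rho_{E,p^k}(\sigma)$ in $\PP\GL_2(\Z/p^k\Z)$ then fixes three points whose mod $p$ reductions are distinct, so \Cref{ako 3 onda sve} forces it to fix every point of $\PP^1(\Z/p^k\Z)$. Thus every cyclic subgroup of order $p^k$ of $E[p^k]$ is Galois-stable, and since there are exactly $p^k+p^{k-1}=\#\PP^1(\Z/p^k\Z)$ such subgroups, all of them yield isogenies defined over $K$.

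I expect the only genuinely delicate point to be the second step, the identification of $C_i\cap C_j$ being nontrivial with the coincidence of the mod $p$ reductions; everything else is a direct application of the already-established proposition. The care there lies in correctly handling the socle $C_i[p]$ and the multiplication-by-$p^{k-1}$ identification, and in ensuring that the chosen generators $(x_i,y_i)$ are of order $p^k$ so that the associated points of $\PP^1(\Z/p^k\Z)$ are well defined.
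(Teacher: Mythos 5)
Your proof is correct and follows essentially the same route as the paper: translate the kernels to points of $\PP^1(\Z/p^k\Z)$, identify triviality of $C_i\cap C_j$ with distinctness of the reductions modulo $p$, and invoke \Cref{ako 3 onda sve}. The only difference is that you spell out the socle argument for the middle equivalence, which the paper simply asserts.
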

\begin{proof}
    Consider the correspondence between cyclic subgroups of $E[p^k]$ of order $p^k$ and $\PP^1(\Z/ p^k\Z)$. Let $P_i$ be the point on $\PP^1(\Z/p^k\Z)$ corresponding to $C_i$. The condition $C_i \cap C_j=\{\mathcal O\}$ is equivalent to $P_i$ and $P_j$ being distinct modulo $p$. The claim now follows from \Cref{ako 3 onda sve}.
\end{proof}

\begin{remark}
    All results from this section are in fact true when we replace number fields with arbitrary fields of characteristic $0$. The proofs do not change at all. Over characteristic $p$, the situation is different. A nice expository article describing the structure of isogeny graphs over finite fields is \cite{volcanoes}.
\end{remark}

\section{Possible number of points above a $j$-invariant on some classes of modular curves}\label{sec4}

Having determined the possible values for the number of $p^k$-isogenies of an elliptic curve over a number field, we aim to do analogous work for the number of points on some other families of modular curves. We will discuss modular curves corresponding to Cartan subgroups and their normalisers.

\begin{definition} Let $p$ be an odd prime and $k$ a positive integer. \\ The \emph{split Cartan subgroup} of $\mathrm{GL}_2(\mathbb{Z}/p^k\mathbb{Z})$ is the subgroup of all diagonal matrices, 
$$
C_{\mathrm{s}}(p^k)=\left\{\begin{pmatrix}a&0\\[4pt]0&b\end{pmatrix}:a,b\in (\Z/p^k\Z)^\times\right\}.
$$
Let $\epsilon \in (\Z/p^k\Z)^\times$ be a non-square element. The \emph{nonsplit Cartan subgroup} of $\GL_2(\mathbb{Z}/p^k\mathbb{Z})$ is
$$
C_{\mathrm{ns}}(p^k)
  = \left\{
     \begin{pmatrix}
       a & \epsilon b \\
       b & a
     \end{pmatrix}
     : a,b \in \mathbb{Z}/p^k\mathbb{Z},\ 
       a^2 - \epsilon b^2 \in (\mathbb{Z}/p^k\mathbb{Z})^\times
    \right\}.
$$
\end{definition}
\begin{remark}
    Any conjugate of the split/nonsplit Cartan subgroups defined above is also referred to as split/nonsplit Cartan. The different choices of $\epsilon$ in the definition of $C_{\mathrm{ns}}$ yield conjugate groups. Since our results do not depend on conjugation, we will simply work with the above realisations of Cartan subgroups.
\end{remark}
The description of normalisers of the Cartan subgroups is well-known. 
\begin{lemma} \phantom{a}
    \begin{enumerate}[label=\textnormal{(\alph*)}]
        \item The normaliser $N_{\mathrm s}(p^k)$ of $C_{\mathrm s}(p^k)$ in $\GL_2(\Z/p^k\Z)$ is generated by $\sm{0 & 1 \\ 1 & 0}$ and $C_{\mathrm s}(p^k)$. The group $C_{\mathrm s}(p^k)$ has index $2$ inside $N_{\mathrm s}(p^k)$.
        \item The normaliser $N_{\mathrm{ns}}(p^k)$ of $C_{\mathrm{ns}}(p^k)$ in $\GL_2(\Z/p^k\Z)$ is generated by $\sm{1 & 0 \\ 0 & -1}$ and $C_{\mathrm{ns}}(p^k)$. The group $C_{\mathrm{ns}}(p^k)$ has index $2$ inside $N_{\mathrm{ns}}(p^k)$.
    \end{enumerate}
\end{lemma}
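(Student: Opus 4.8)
The plan is to prove each part in two steps: first verify that the proposed extra generator really does normalise the Cartan subgroup (so the group it generates lies in the normaliser and has index exactly $2$), and then establish the reverse inclusion, which is the only substantive part. For part (a), conjugating a diagonal matrix by $\sm{0 & 1 \\ 1 & 0}$ swaps its two diagonal entries, so $\sm{0 & 1 \\ 1 & 0}$ normalises $C_{\mathrm s}(p^k)$; as it is not diagonal it is not in $C_{\mathrm s}(p^k)$, giving index $2$ for that subgroup.

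For the reverse inclusion in (a), I would test normalisation against the single element $t = \sm{1 & 0 \\ 0 & -1} \in C_{\mathrm s}(p^k)$. Writing $g = \sm{x & y \\ z & w}$ and expanding $gt = dg$ for the resulting image $d = \sm{d_1 & 0 \\ 0 & d_2}$ yields the scalar equations $(d_1-1)x = 0$, $(d_1+1)y = 0$, $(d_2-1)z = 0$, $(d_2+1)w = 0$. Since $p$ is odd, $2 = (d_1+1)-(d_1-1)$ is a unit, so at most one of $d_1\mp 1$ fails to be a unit; hence $x=0$ or $y=0$, and likewise $z=0$ or $w=0$. Combining this with the fact that $\det g = xw-yz$ is a unit rules out the two cases in which a whole column vanishes, leaving exactly the diagonal and the anti-diagonal matrices, i.e. $N_{\mathrm s}(p^k) = C_{\mathrm s}(p^k) \cup \sm{0 & 1 \\ 1 & 0}\,C_{\mathrm s}(p^k)$. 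Notably this sidesteps diagonalisation, which is unavailable over $\Z/p^k\Z$.

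Part (b) is harder because the nonsplit Cartan has no eigenvectors over $\Z/p^k\Z$; the key idea is to pass from the group to the $\Z/p^k\Z$-algebra it spans. Let $R' = \{\sm{a & \epsilon b \\ b & a} : a,b \in \Z/p^k\Z\}$, a free rank-two commutative subalgebra of $M_2(\Z/p^k\Z)$ isomorphic to $(\Z/p^k\Z)[t]/(t^2-\epsilon)$. Since both $I$ and $J = \sm{0 & \epsilon \\ 1 & 0}$ lie in $C_{\mathrm{ns}}(p^k)$ and span $R'$ over $\Z/p^k\Z$, any $g$ normalising $C_{\mathrm{ns}}(p^k)$ must send $R' = \mathrm{span}\,C_{\mathrm{ns}}(p^k)$ to itself, and conversely a $g$ with $gR'g^{-1}=R'$ carries units to units and hence normalises $C_{\mathrm{ns}}(p^k)$. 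So the two normalisation conditions coincide, and conjugation by such a $g$ restricts to a $\Z/p^k\Z$-algebra automorphism of $R'$, being a ring isomorphism that fixes the scalars.

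The final step is to compute $\Aut_{\Z/p^k\Z}(R')$. Such an automorphism sends $J$ to a square root of $\epsilon$ in $R'$; solving $(a+bJ)^2 = \epsilon$ and using that $p$ is odd (so $2ab=0$ forces $ab=0$), that $\epsilon$ stays a nonsquare mod $p^k$, and that $1$ has only the square roots $\pm 1$ in $(\Z/p^k\Z)^\times$, I would show the only square roots of $\epsilon$ are $\pm J$, whence $\Aut_{\Z/p^k\Z}(R') = \{1,\sigma\}$ with $\sigma(J)=-J$. A direct check identifies $\sigma$ with conjugation by $n = \sm{1 & 0 \\ 0 & -1}$ (which in particular shows $n$ normalises $C_{\mathrm{ns}}(p^k)$, and $n\notin C_{\mathrm{ns}}(p^k)$ as $1\neq -1$), and shows the centraliser of $J$ in $M_2(\Z/p^k\Z)$ is exactly $R'$. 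Hence if conjugation by $g$ is the identity on $R'$ then $g$ commutes with $J$, so $g \in R' \cap \GL_2(\Z/p^k\Z) = C_{\mathrm{ns}}(p^k)$; and if it equals $\sigma$, then $n^{-1}g$ centralises $R'$ and lies in $C_{\mathrm{ns}}(p^k)$, so $g \in n\,C_{\mathrm{ns}}(p^k)$. This gives $N_{\mathrm{ns}}(p^k) = C_{\mathrm{ns}}(p^k)\cup n\,C_{\mathrm{ns}}(p^k)$ with index $2$. I expect the computation of $\Aut_{\Z/p^k\Z}(R')$ — pinning down the square roots of $\epsilon$ in the presence of zero divisors — to be the main obstacle, and the algebra-automorphism reformulation is precisely what makes it tractable.
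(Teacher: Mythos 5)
Your argument is correct, but it is worth noting that the paper essentially does not prove this lemma: part (a) is dismissed as a ``direct calculation'' and part (b) is outsourced entirely to Lozano-Robledo \cite[Theorem 1.1]{alvarocm}. For (a), your device of testing normalisation against the single non-scalar element $\sm{1 & 0 \\ 0 & -1}$ and using that $2=(d_1+1)-(d_1-1)$ is a unit (so at most one of $d_1\mp1$ lies in $(p)$) is a clean way to organise the direct calculation, and correctly handles the failure of diagonalisation over $\Z/p^k\Z$. For (b), your route through the commutative algebra $R'\cong(\Z/p^k\Z)[t]/(t^2-\epsilon)$ is a genuinely self-contained replacement for the citation: the key observations --- that $C_{\mathrm{ns}}(p^k)$ is exactly the unit group of $R'$ and spans it, so normalising the group is the same as normalising the algebra; that the induced map is an algebra automorphism determined by the image of $J$ among the square roots of $\epsilon$; and that the centraliser of $J$ is $R'$ itself --- are all right. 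The one step you flag as delicate, the square roots of $\epsilon$ in the presence of zero divisors, does go through: from $ab=0$ and $a^2+\epsilon b^2=\epsilon$, reducing mod $p$ forces $p\mid a$ (else $\epsilon$ would be a square mod $p$), whence $b$ is a unit, whence $a=0$ and $b=\pm1$. What your approach buys is independence from the external reference and an argument that visibly works uniformly in $k$; what it costs is length relative to simply quoting the known result, which is all the paper needs.
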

\begin{proof}
    Part (a) follows from direct calculation. For part (b), see \cite[Theorem 1.1]{alvarocm}.
\end{proof}

Recall that for a group $H\leq \GL_2(\Z/N\Z)$, we denote by  $S(H)$ the set of possible values that $$\frac{\#\{g \in \GL_2(\Z/N\Z) \mid gRg^{-1}\subseteq H\}}{\#H}$$ obtains as $R$ ranges over all subgroups of $\GL_2(\Z/N\Z)$. 

\begin{proposition}
    Let $p$ be an odd prime and $k$ a positive integer. Then $$S(C_{\mathrm s}(p^k))=\{ 0, p^{2k}+p^{2k-1}\}\cup \{2p^{2j} \mid j\in \{0,1,\ldots, k-1\}\}.
    $$
\end{proposition}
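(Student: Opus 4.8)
The plan is to reinterpret the quantity $\dfrac{\#\{g : gRg^{-1}\subseteq C_{\mathrm s}(p^k)\}}{\#C_{\mathrm s}(p^k)}$ geometrically. Write $G=\GL_2(\Z/p^k\Z)$ and $T=\{g\in G : gRg^{-1}\subseteq C_{\mathrm s}(p^k)\}$. Since $-I\in C_{\mathrm s}(p^k)$, the set $T$ is stable under left multiplication by $C_{\mathrm s}(p^k)$, so the ratio counts left cosets $C_{\mathrm s}(p^k)g\subseteq T$. Using the bijection $C_{\mathrm s}(p^k)\backslash G\to G/C_{\mathrm s}(p^k)$, $g\mapsto g^{-1}$, together with the facts that $C_{\mathrm s}(p^k)$ is exactly the stabilizer of the ordered pair of coordinate lines $(\langle e_1\rangle,\langle e_2\rangle)$ and that $G$ acts transitively on ordered pairs of cyclic order-$p^k$ subgroups spanning $(\Z/p^k\Z)^2$, I would identify the ratio with the number of ordered pairs $(L_1,L_2)$ of such spanning subgroups each preserved by $R$. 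Two cyclic order-$p^k$ subgroups span the module precisely when their images in $\PP^1(\F_p)$ are distinct, so the quantity equals the number of ordered pairs of $R$-fixed points of $\PP^1(\Z/p^k\Z)$ with distinct reductions modulo $p$. In particular it is $0$ unless $R$ is conjugate into $C_{\mathrm s}(p^k)$, so I may assume $R$ consists of diagonal matrices.

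Next I would compute, for diagonal $R$, the set $F$ of $R$-fixed points of $\PP^1(\Z/p^k\Z)$ together with their reductions modulo $p$. Writing a point as $(x:1)$ or as $(1:y)$ with $y\in p\Z/p^k\Z$, and $\sm{a & 0 \\ 0 & d}\in R$, the fixed-point conditions read $(a-d)x\equiv 0$ and $(a-d)y\equiv 0 \pmod{p^k}$, which are exactly the type A and type B systems of \Cref{neparno} specialised to $b=0$. Setting $\gamma=\min_{\sm{a & 0 \\ 0 & d}\in R} v_p(a-d)$, this gives $p^\gamma$ fixed points of each kind when $\gamma<k$, and in that case $v_p(x),v_p(y)\ge k-\gamma\ge 1$ forces all of them to reduce modulo $p$ to $(0:1)$, respectively $(1:0)$. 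When $\gamma=k$ the group $R$ is scalar and fixes all $p^k+p^{k-1}$ points, with $p^{k-1}$ lying over each of the $p+1$ points of $\PP^1(\F_p)$.

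Finally I would count ordered pairs with distinct mod-$p$ reductions. For $\gamma<k$ the fixed set is concentrated over the two points $(1:0),(0:1)\in\PP^1(\F_p)$ with $p^\gamma$ points each, giving $p^\gamma\cdot p^\gamma+p^\gamma\cdot p^\gamma=2p^{2\gamma}$ ordered pairs; as $\gamma$ ranges over $\{0,\ldots,k-1\}$ this produces every value $2p^{2j}$. For $\gamma=k$ the count is $(p^k+p^{k-1})^2-(p+1)(p^{k-1})^2=p^{2k}+p^{2k-1}$. Together with the value $0$ in the non-diagonalisable case this yields the inclusion of $S(C_{\mathrm s}(p^k))$ in the claimed set. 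For the reverse inclusion I would exhibit explicit subgroups: $\GL_2(\Z/p^k\Z)$ realises $0$, the trivial group realises $p^{2k}+p^{2k-1}$, and the cyclic group generated by $\sm{1 & 0 \\ 0 & 1+p^j}$ realises $2p^{2j}$ for each $0\le j\le k-1$.

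The conceptual heart is the first paragraph: recognising that the split-Cartan ratio counts ordered pairs of $R$-fixed lines that are independent modulo $p$. This reduces everything to tracking not merely \emph{how many} lines $R$ fixes (the content of \Cref{neparno}) but how those lines distribute over $\PP^1(\F_p)$. The one genuinely new phenomenon, compared to the Borel case, is that two distinct fixed lines may coincide modulo $p$ and therefore fail to span, so they cannot serve as the two eigenlines of a split Cartan conjugate. Keeping this bookkeeping straight — in particular the collapse of the entire fixed set onto just two points of $\PP^1(\F_p)$ when $\gamma<k$ — is the step that requires care, although the underlying valuation estimates are already supplied by the proof of \Cref{neparno}.
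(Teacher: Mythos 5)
Your argument is correct, and it reaches the paper's answer by a genuinely different route. The paper proceeds by brute force: it writes out the conjugate $g\sm{a & 0\\ 0 & b}g^{-1}$ entrywise, extracts the conditions $uv(a-b)=0$ and $wz(a-b)=0$, and counts matrices $g$ directly, dividing by $\#C_{\mathrm s}(p^k)$ at the end. You instead observe that $C_{\mathrm s}(p^k)$ is the full stabilizer of the ordered pair of coordinate lines and that $\GL_2(\Z/p^k\Z)$ acts transitively on ordered pairs of spanning cyclic order-$p^k$ subgroups, so the ratio equals the number of ordered pairs of $R$-fixed points of $\PP^1(\Z/p^k\Z)$ whose reductions in $\PP^1(\F_p)$ are distinct. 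This buys a transparent link to \Cref{neparno}: the split-Cartan count is literally the ``pairs of independent fixed lines'' refinement of the Borel ``single fixed line'' count, and the case split $\gamma<k$ versus $\gamma=k$ becomes the statement that the fixed locus either collapses onto the two points $(0:1),(1:0)$ of $\PP^1(\F_p)$ or spreads evenly over all $p+1$ of them. The valuation computations, the final formulas $2p^{2\gamma}$ and $(p^k+p^{k-1})^2-(p+1)p^{2k-2}=p^{2k}+p^{2k-1}$, and the realizing subgroups all check out, and your witnesses for the reverse inclusion coincide with the paper's. One cosmetic slip: the stability of $T=\{g: gRg^{-1}\subseteq C_{\mathrm s}(p^k)\}$ under left multiplication by $C_{\mathrm s}(p^k)$ has nothing to do with $-I\in C_{\mathrm s}(p^k)$; it follows simply from $h(gRg^{-1})h^{-1}\subseteq hC_{\mathrm s}(p^k)h^{-1}=C_{\mathrm s}(p^k)$ for $h\in C_{\mathrm s}(p^k)$.
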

\begin{proof}
    Let $H=C_{\mathrm s}(p^k)$. Let $R$ be a subgroup of $H$. Let $g=\sm{u & v \\ w & z}$. Then $gRg^{-1}\subseteq H$ if and only if $$\sm{u & v \\ w & z} \sm{a & 0 \\ 0 & b} \sm{z & -v \\ -w & u}$$ is a diagonal matrix for every $\sm{a & 0 \\ 0 & b} \in R$. Calculating the top right and bottom left entry in the product and equating them with $0$ gives us the following two equations: \begin{align*}
        uv(a-b)&=0, \\
        wz(a-b)&=0.
    \end{align*}
    Let $\alpha$ denote $\min_{g \in R} v_p(a-b)$. If $\alpha=k$, $H$ consists only of scalar matrices and is hence normal in $\GL_2(\Z/p^k\Z)$, so $gRg^{-1}$ is always contained in $R$. Suppose $\alpha<k$. Then the equations imply that $gRg^{-1}$ is contained in $H$ if and only if $uv$ and $wz$ are divisible by $p^{k-\alpha}$. Since $(u, v)$ is a row of an invertible matrix, at most one among $u$ and $v$ is divisible by $p$, and analogously for $(w, z)$. 
    
    If $u$ is divisible by $p^{k-\alpha}$, then $w$ and $v$ are invertible modulo $p$ and $z$ is divisible by $p^{k-\alpha}$. There are $p^{\alpha}$ choices for $u$, $p^\alpha$ choices for $z$, $\varphi(p^k)$ choices for $v$ and $\varphi(p^k)$ choices for $w$. We get the same number in total if $v$ is divisible by $p^{k-\alpha}$. 

    In total, the number of elements $g$ such that $gRg^{-1}\subseteq H$ is $2p^{2\alpha} \cdot \varphi(p^k)^2$. Dividing by the size of $H$, which is $\varphi(p^k)^2$, we obtain $2p^{2\alpha}$.

    To complete the proof, for each $j \in \{0,1,\ldots, k-1\}$ we must construct a group $R$ such that $\min_{g \in R} v_p(a-b)=j$. To achieve this, one can take $R$ to be the cyclic group generated by $\sm{p^j+1 & 0 \\
     0 & 1}$.
\end{proof}

\begin{proposition}
    Let $p$ be an odd prime and $k$ a positive integer. Let 
$$
\begin{aligned}
A_k(p)&=\{0\}\cup \{p^{2j} \mid 0\le j<k\}\cup\left\{\frac{p^{2k}+p^{2k-1}}{2}\right\},\\
B_k(p)&=\{p^{j} \mid 0\le j<k\},\\
C_k(p)&=\left\{\frac{p^{k}+p^{k-1}}{2},\ \frac{p^{k}-p^{k-1}}{2}+1\right\},\\
D_k(p)&=\begin{cases}
\{2\}, & \text{if } p\equiv 3\pmod{4},\\[4pt]
\{3\}, & \text{if } p\equiv 1\pmod{4}.
\end{cases}
\end{aligned}
$$
  Then $$S(N_{\mathrm s}(p^k))=A_k(p)\cup B_k(p)\cup C_k(p) \cup D_k(p).$$
\end{proposition}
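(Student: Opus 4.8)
The plan is to recast $S(N_{\mathrm s}(p^k))$ as a point‑counting problem on $\PP^1(\Z/p^k\Z)$. Write $N=N_{\mathrm s}(p^k)$; by the preceding lemma $N$ is exactly the setwise stabiliser in $\GL_2(\Z/p^k\Z)$ of the unordered pair $\{[1:0],[0:1]\}$, the diagonal matrices fixing both points and the antidiagonal ones swapping them. Call an unordered pair of points of $\PP^1(\Z/p^k\Z)$ \emph{complementary} if the two points have distinct reductions in $\PP^1(\F_p)$. For $g\in\GL_2(\Z/p^k\Z)$ one has $gRg^{-1}\subseteq N$ if and only if $R$ permutes the complementary pair $P(g):=\{g^{-1}[1:0],g^{-1}[0:1]\}$ among itself; since $P(g)=P(g')$ if and only if $g'g^{-1}\in N$, the map $g\mapsto P(g)$ has all fibres of size $\#N$, and it is onto the set of complementary pairs because $\GL_2$ is transitive on ordered complementary pairs. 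Dividing by $\#N$ yields the reformulation I will use: $S(N)$ is the number of complementary pairs permuted by $R$. If $R$ is not conjugate into $N$ this number is $0\in A_k(p)$, so I may assume $R\subseteq N$. I will repeatedly use two facts: a diagonal subgroup $R_0$ with $\alpha:=\min_{R_0}v_p(a-b)<k$ fixes exactly the $2p^{\alpha}$ points lying over $[1:0]$ and $[0:1]$ and fixes no point with both coordinates units; and the antidiagonal involution $\sm{0&s\\t&0}$ on $\PP^1(\Z/p^k\Z)$ has the two complementary fixed lines $[s:\pm\mu]$ when $st=\mu^2$ is a square, and is fixed‑point‑free, pairing the points into $(p^k+p^{k-1})/2$ automatically complementary pairs, when $st$ is a nonsquare.

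For achievability I exhibit one subgroup per value. A scalar $R$ fixes every pair and gives $(p^{2k}+p^{2k-1})/2$; $R=\GL_2(\Z/p^k\Z)$ gives $0$; the diagonal $R=\langle\sm{1+p^{j}&0\\0&1}\rangle$ gives $p^{2j}\in A_k$; adjoining $\sm{0&1\\1&0}$ gives $p^{j}\in B_k$; the involutions $\langle\sm{0&1\\1&0}\rangle$ and $\langle\sm{0&\epsilon\\1&0}\rangle$ (with $\epsilon$ a nonsquare) give the two elements $(p^k-p^{k-1})/2+1$ and $(p^k+p^{k-1})/2$ of $C_k$; and $\langle\sm{0&1\\1&0},\sm{1&0\\0&-1}\rangle$ gives the element of $D_k$.

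For the reverse containment I run through $R\subseteq N$ according to $R_0:=R\cap C_{\mathrm s}(p^k)$. Suppose first that $R$ is diagonal. If $R$ is scalar it gives the top value. If $R$ is nonscalar and contains an element of nonzero trace, then, since a diagonal matrix interchanges a complementary pair only when its trace is zero, both points of any invariant pair are fixed by that element and hence lie in the two fibres fixed by $R_0$; the invariant pairs are therefore the $p^{2\alpha}$ pairs with one point in each fibre. If instead $R$ is nonscalar but $R\subseteq\{\sm{a&0\\0&b}:b=\pm a\}$ (which forces $\alpha=0$), the trace‑zero elements additionally swap the pairs $\{[1:y],[1:-y]\}$ and give $(p^k-p^{k-1})/2+1\in C_k$. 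Now suppose $R$ contains an antidiagonal element $w_\ast$ and $\alpha<k$; then $w_\ast$ interchanges the two fibres, so no point is fixed by all of $R$ and every invariant pair is genuinely swapped. If $R_0$ is nonscalar with an element of nonzero trace, the two swapped points must lie in distinct fibres and there are exactly $p^{\alpha}\in B_k$ such pairs. If $R_0$ is scalar, $R$ acts through the single involution $w_\ast$ and I read off a $C_k$ value from whether $st$ is a square. Finally, if $R_0$ is nonscalar and contained in $\{\sm{a&0\\0&b}:b=\pm a\}$, so $\alpha=0$ and $R_0=\langle\text{scalars},d\rangle$ with $d=\sm{a_0&0\\0&-a_0}$, then modulo scalars $R$ acts as the Klein four‑group $\langle\bar w_\ast,\bar d\rangle$, and the number of invariant complementary pairs equals the number of the three involutions $\bar d,\bar w_\ast,\overline{dw_\ast}$ whose fixed‑point set on $\PP^1(\Z/p^k\Z)$ is a complementary pair.

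The main obstacle is this final Klein four‑group case, where the answer bifurcates with $p\bmod 4$. The involution $\bar d$ is always split, with complementary eigenlines $\{[1:0],[0:1]\}$, while $\bar w_\ast$ and $\overline{dw_\ast}$ are split exactly when $st$, respectively $-st$, is a square; because $-1$ is a square if and only if $p\equiv1\pmod4$, the latter two are of the same type when $p\equiv1$ and of opposite type when $p\equiv3$, producing the counts $1$ or $3$ in the first case and always $2$ in the second, i.e.\ a value in $D_k\cup\{1\}$. The crux is to establish, over $\Z/p^k\Z$ rather than just $\F_p$, that a split antidiagonal involution has exactly one complementary pair of fixed lines and a nonsplit one has none, and that a diagonal group with parameter $\alpha$ fixes precisely $2p^{\alpha}$ lines; this rigidity under reduction modulo $p$ is what keeps the $C_k$ and $D_k$ counts independent of $k$ while the $A_k$ and $B_k$ counts scale as powers of $p$.
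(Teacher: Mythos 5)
Your argument is correct and takes a genuinely different route from the paper's. The paper conjugates a general $g=\sm{u & v \\ w & z}$ against the diagonal and antidiagonal elements of $R$, extracts four explicit polynomial systems (A1, A2, B1, B2) in the entries $u,v,w,z$, and counts their solutions case by case using character sums and Hensel lifting. You instead identify $N_{\mathrm s}(p^k)$ with the stabiliser of the unordered pair $\{[1:0],[0:1]\}$ and convert the coset count into counting \emph{complementary} pairs of points of $\PP^1(\Z/p^k\Z)$ permuted by $R$; everything then reduces to fixed-point counts of diagonal elements and of antidiagonal involutions on $\PP^1(\Z/p^k\Z)$ (i.e.\ the elementary solution count of $tx^2=sy^2$ with $x$ or $y$ a unit), and in the hardest case to deciding which of the three involutions of a Klein four-group in $\PP\GL_2(\Z/p^k\Z)$ are split. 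The case decompositions end up parallel — your square/nonsquare dichotomy for $st$ is the paper's dichotomy for $d/c$, and your $\alpha$ is the same invariant — but your version is more conceptual, and it makes transparent why the value $1$ (rather than only $2$ or $3$) can occur in the Klein four-group case when $p\equiv 1\pmod 4$ and $st$ is a nonsquare, a point the paper's Case I.b treats only implicitly. Two wording slips should be repaired: ``contains an element of nonzero trace'' must read ``contains a \emph{nonscalar} element of nonzero trace'' (every subgroup contains the identity, of trace $2$, so as written the dichotomy with $R\subseteq\{\sm{a & 0 \\ 0 & b}:b=\pm a\}$ is vacuous), and the hypothesis $\alpha<k$ should not govern the sub-case where $R_0$ is scalar, since there $\alpha=k$.
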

\begin{proof}
    Let $H=N_{\mathrm s}(p^k)$. Note that any matrix in $H$ is either of the form $\sm{a & 0 \\ 0 & b}$ or $\sm{0 & c \\ d & 0}$.  Let $R$ be a subgroup of $H$.
    Let $g=\sm{u & v \\ w & z}$. We now calculate the matrices $g \sm{a & 0 \\0 & b}g^{-1}$ and $g \sm{0 & c \\d & 0}g^{-1}$. We have $$
g
\begin{pmatrix} a & 0 \\[4pt] 0 & b \end{pmatrix}
g^{-1}
  = \frac{1}{uz - vw}
    \begin{pmatrix}
      u z a - v w b & u v (b - a) \\[4pt]
      w z (a - b) & u z b - v w a
    \end{pmatrix},
$$
and similarly
$$
g
\begin{pmatrix} 0 & c \\[4pt] d & 0 \end{pmatrix}
g^{-1}
  = \frac{1}{uz - vw}
    \begin{pmatrix}
      v z d - u w c & u^2 c - v^2 d \\[4pt]
      z^2 d - w^2 c & u w c - v z d
    \end{pmatrix}.
$$
Consider the following four systems of equations:

\textbf{System A1:}
\[
\begin{cases}
auz = bvw, \\[4pt]
buz = avw.
\end{cases}
\]

\textbf{System A2:}
\[
\begin{cases}
wz(b-a)=0, \\[4pt]
uv(b-a)=0.
\end{cases}
\]

\textbf{System B1:}

\[
\begin{cases}
vzd=uwc.
\end{cases}
\]

\textbf{System B2:}

\[
\begin{cases}
u^2c=v^2d, \\[4pt]
w^2c =z^2d.
\end{cases}
\]

A matrix $g=\sm{u & v \\ w & z}\in \GL_2(\Z/p^k\Z)$ satisfies $gRg^{-1}\subseteq H$ if and only if for each $\sm{a & 0 \\ 0 & b}\in R$, $g$ is a solution of system A1 or system A2, and for each $\sm{0 & c \\ d & 0} \in R$, $g$ is a solution of system B1 or system B2. We will call any such matrix $g$ a solution. We say a solution is trivial if $g \in H$. Note that $g$ cannot simultaneously be a solution of systems A1 and A2, or B1 and B2. However, for some $\sm{a & 0 \\ 0 & b}$ it may be a solution of A1, while for some other $\sm{a & 0 \\ 0 & b}$ it is a solution of A2 etc.

\noindent \textbf{Case I.} There is a solution $g$ which is a solution of some system A1. Adding up the two equalities, we obtain $(a+b)(uz-vw)=0$, so $a+b=0$ since $uz-vw=\det(g)$ is invertible. It follows that $b=-a$. Furthermore, we obtain $uz+vw=0$, from where it follows that $u,z,v,w$ are all invertible. Thus, if $g$ is a solution of system A2 for some other $\sm{a & 0 \\ 0 & b}$, it follows that $a=b$.

In conclusion, if there is a solution which is a solution of system A1, then $a/b\in\{-1,1\}$ for all $\sm{a & 0 \\ 0 & b} \in R$, and $-1$ is achieved at least once. Also, taking $\sm{a & 0 \\ 0 & b}$ with $b=-a$, it follows that \emph{any} solution $\sm{u & v \\ w & z}$ for which some of $u,v,w,z$ is not invertible is an element of $H$, i.e.\ a trivial solution.

\noindent \textbf{Case I.a} There are no matrices of the form $\sm{0 & c \\d & 0}$ in $R$. We need to count the number of solutions to $uz+vw=0$ among invertible matrices $\sm{u & v \\ w & z}.$ We can pick $u$, $z$ and $w$ to be any invertible elements of $\Z/p^k\Z$, and $v$ is then uniquely determined. This gives us $\varphi(p^k)^3$ solutions in total. The solutions which are not solutions of system A1 are always solutions of system A2, which turns into $wz=uv=0$ for $b=-a$. The solutions to this system are exactly the trivial ones, i.e.\ the elements of $H$, so we get $2\varphi(p^k)^2$ solutions. Dividing by $\#H=2\varphi(p^k)^2$ gives us the final number $\frac{1}{2}\varphi(p^k)+1=\frac{p^k-p^{k-1}}{2}+1$.

\noindent \textbf{Case I.b} There are matrices of the form $\sm{0 & c \\d & 0}$ in $R$.
Note that the systems B1 and B2 only depend on the value of $\frac{d}{c}$. Furthermore, note that if $t$ is achieved as $\frac{d}{c}$ for some $\sm{0 & c \\ d & 0} \in R$, then $-t$ is achieved for $\sm{0 & c \\ d & 0}\sm{a & 0 \\0 & -a}$. Thus, the set of values of $\frac{d}{c}$ is closed under multiplication by $-1$. We now claim that there are exactly two values of $\frac{d}{c}$ which are achieved. Namely, since $g$ is a solution with $u,v,w,z$ invertible, one has $\frac{d}{c} \in \{u^2/v^2, uw/vz\}$, depending on whether $g$ is a solution of B1 or B2. Furthermore, if $g$ is a solution of system B1 for some $\sm{0 & c \\d & 0}$, it is a solution of system B2 for $\sm{0 & c \\ d & 0}\cdot \sm{a & 0 \\0 & -a}$. Let $\{t, -t\}$ be the set of those values. Note that at least one among $t$ or $-t$ is a square (this follows from system B2 having a solution). Without loss of generality, we may assume that $t$ is a square.

\noindent \textbf{Case I.b.(i)} $-1$ is not a square.
Let $\sm{u & v \\w & z}$ be any nontrivial solution. Then it is a solution of some system A1 so $uz+vw=0$. Furthermore, for $\sm{c & 0 \\ 0 & d}$ such that $\frac{d}{c}=t$ it is a solution of system B2 and for $\sm{c & 0 \\ 0 & d}$ such that $\frac{d}{c}=-t$ it is a solution of system B1. To summarize, we have \[
\begin{cases}
u^2=tv^2, \\[4pt]
w^2 =tz^2, \\[4pt]
-vzt=uw, \\[4pt]
uz=-vw.
\end{cases}
\]
We have $u\in\{\sqrt{t}v, -\sqrt{t}v\}$ and $w\in \{\sqrt{t}z, -\sqrt t z\}$. To satisfy the third equation, the sign choices need to be distinct. In this way, the fourth equation is automaticaly true. Thus, we have $\varphi(p^k)^2$ choices for $v$ and $z$, and two choices for a combination of $u$ and $w$ with distinct signs. In total, there are $2\varphi(p^k)^2$ nontrivial solutions. Dividing by $\#H$ and adding the trivial solution, we obtain the number $2$.

\noindent \textbf{Case I.b.(ii)} $-1$ is a square. In this case, there is an extra choice, as $t$ and $-t$ may change their roles for some $g$. In total, we get $4\varphi(p^k)^2$ solutions. Dividing by $\#H$ and adding the trivial solution, we obtain the number $3$.

\noindent \textbf{Case II.} All solutions $g$ are solutions of systems A2. As before, let $\alpha=\min_{g \in R} v_p(b-a)$. 

\noindent \textbf{Case II.a.} $\alpha=k$. This means that all matrices of the form $\sm{a & 0 \\ 0 & b}\in R$ satisfy $a=b$. In this case, any matrix $g$ is a solution to the system A2. It remains to count the number of $g$ which are solutions to either B1 or B2. Note that if there are no matrices of the form $\sm{0 & c  \\ d & 0}$, all $g$ are solutions and we get the number $\frac{p^{2k}+p^{2k-1}}{2}$. Now suppose that we are not in this case.

Note that if $\sm{0 & c_1 \\ d_1 & 0}$ and $\sm{0 & c_2 \\ d_2 & 0}$ are in $H$, their product $\sm{c_1d_2 & 0 \\ 0 & d_1c_2}$ is also in $H$, so $c_1d_2=c_2d_1$. This means that the ratio $\frac{d}{c}$ is equal to some constant $t$, and the systems B1 and B2 do not depend on the choice of a matrix in $H$. Thus, it just remains to count the number of solutions to B1 and B2 separately and add the numbers up.

\noindent \textbf{Case II.a.(i)} $t$ is a square.
We first count the number of solutions of system B1. If $u$ is not invertible, then $v$ and $w$ are, so $z=\frac{uw}{tv}$ is uniquely determined. In total, we have $p^{k-1}$ choices for $u$ and $\varphi(p^k)^2$ choices for $v$ and $w$, which gives us $p^{k-1}\varphi(p^k)^2$.

If $u$ is invertible, then $w=\frac{vzt}{u}$ is uniquely determined. Furthermore, we must ensure that the determinant is invertible, which, after some calculation, simplifies to $u^2-tv^2$ and $z$ being invertible. There are $\varphi(p^k)$ choices for $z$. Since $t$ is a square, there are $2p^{k-1}$ values of $v$ for which $u^2-tv^2$ is divisible by $p$, and hence $p^k-2p^{k-1}$ values for which it is invertible. We thus have $(p^k-2p^{k-1})\varphi(p^k)^2$ choices when $u$ is invertible. In total, system B1 has $\varphi(p^k)^2(p^k-p^{k-1})$ solutions.

The system B2 has $2\varphi(p^k)^2$  solutions: we pick $u$ and $w$ freely in $\varphi(p^k)^2$ ways, and then $v^2$ and $z^2$ are uniquely determined, which means $v$ and $z$ are determined up to sign. As in the previous cases, we need to take one of the $2$ different choices of signs to ensure that the determinant $uz-vw$ is invertible. 

Summing those values and dividing by $2\varphi(p^k)^2$, we obtain $\frac{p^k-p^{k-1}}{2}+1$.

\noindent \textbf{Case II.a.(ii)} $t$ is not a square. In this case, system B2 has no solutions. For system B1, the analysis is analogous to the previous case, with the only difference that when $u$ is invertible, $u^2-tv^2$ is invertible for all values of $v$. In total, there are $p^{k-1}\varphi(p^k)^2+p^k\varphi(p^k)^2$ solutions, and we obtain the number $\frac{p^k+p^{k-1}}{2}$.

\noindent \textbf{Case II.b.} $\alpha<k$. 
Note that then either $u$ and $z$ must both be divisible by $p$, or $v$ and $w$ are both divisible by $p$. However, this means that system B2 never has solutions, as each of $u,v,w,z$ would be divisible by $p$. Thus, every matrix is a solution of system B1.

\noindent \textbf{Case II.b.(i)} There are no matrices of the form $\sm{0 & c \\ d & 0}$ in $R$. In this case, the problem is analogous to the one for $C_{\mathrm s}(p^k)$, and we get the number $p^{2\alpha}$.

\noindent \textbf{Case II.b.(ii)} There are matrices of the form $\sm{0 & c \\d & 0}$ in $R$. 
Note that either $u$ and $z$ or $v$ and $w$ are divisible by $p^{k-\alpha}$. Without loss of generality, assume that $u$ and $z$ are divisible by $p^{k-\alpha}$, and then $v$ and $w$ are invertible.

Consider the system B1 for a fixed $\frac{d}{c}$. We have $$vzd=uwc.$$ Take any $u$ divisible by $p^{k-\alpha}$, and any two invertible $v$ and $w$. Then $z$ is uniquely determined. We get $p^\alpha\cdot \varphi(p^k)^2$ solutions. Similarly, there is the same number of solutions when $v$ and $w$ are divisible by $p^{k-\alpha}$ and $u$ and $z$ are invertible. In total, dividing by $2\varphi(p^k)^2$, we get $p^\alpha$. This is all for a fixed value of $\frac{d}{c}$. However, note that if $\sm{0 & c_1 \\ d_1 & 0}$ and $\sm{0 & c_2 \\ d_2 & 0}$ are in $R$, their product $\sm{c_1d_2 & 0 \\0 & d_1c_2}$ satisfies $p^\alpha \mid c_1d_2-d_1c_2$, and it follows that $\frac{d_1}{c_1}\equiv \frac{d_2}{c_2} \pmod{p^\alpha}$, meaning that every choice of $\frac{d}{c}$ gives the same system B1, and the solutions of the systems are the same.

We have now gone through all cases. We now need to check that each of the cases does occur for some subgroup.

\noindent \textbf{Case I.a} Take $R$ to be generated by $\sm{1 & 0 \\ 0 & -1}.$

\noindent \textbf{Case I.b} Take $R$ to be generated by $\sm{1 & 0 \\ 0 & -1}$ and $\sm{0 & t \\ 1 & 0}$ for some square $t$. Depending on whether $-1$ is a square, we end up in Case I.b.(i) or I.b.(ii).

\noindent \textbf{Case II.a} Take $R$ to be generated by $\sm{0 & t \\ 1 & 0}$. Depending on whether $t$ is a square or not, we end up in Case II.a.(i) or Case II.a.(ii).

\noindent \textbf{Case II.b.(i)} This case was already discussed for $C_{\mathrm s}(p^k)$.

\noindent \textbf{Case II.b.(ii)} Take $R$ to be generated by $\sm{1  & 0 \\ 0 & p^j+1}$ and $\sm{0 & 1 \\ 1 & 0}$.
\end{proof}

\begin{proposition}
    Let $p$ be an odd prime and $k$ a positive integer. Then $$S(C_{\mathrm{ns}}(p^k))=\{0\} \cup \{2p^{2j} \mid 0\leq j<k\} \cup \{p^{2k}-p^{2k-1} \}.$$
\end{proposition}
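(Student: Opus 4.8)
The plan is to reduce the problem exactly as in the split case to counting conjugators $g$, and then to exploit the ring-theoretic description of the nonsplit Cartan to collapse the counting condition into a single congruence modulo a power of $p$. First I would record that $C_{\mathrm{ns}}(p^k)$ is the centralizer in $\GL_2(\Z/p^k\Z)$ of the single matrix $J=\sm{0 & \epsilon \\ 1 & 0}$, and that the additive span $W=\Z/p^k\Z\cdot I\oplus \Z/p^k\Z\cdot J$ is precisely the image of the ring $(\Z/p^k\Z)[\sqrt{\epsilon}]$, with $C_{\mathrm{ns}}(p^k)=W^{\times}$. Since the value of the quotient depends only on the conjugacy class of $R$, I may assume $R\subseteq C_{\mathrm{ns}}(p^k)$ whenever the count is nonzero (otherwise the value is $0$). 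Writing each element of $R$ as $aI+bJ$ and setting $\beta=\min_{aI+bJ\in R} v_p(b)$ (with $\beta=k$ exactly when $R$ consists of scalars), the key point is that $g(aI+bJ)g^{-1}=aI+b\,gJg^{-1}$, so $gRg^{-1}\subseteq C_{\mathrm{ns}}(p^k)$ holds if and only if $b\,[gJg^{-1},J]=0$ for every occurring $b$, i.e. if and only if $[gJg^{-1},J]\equiv 0 \pmod{p^{m}}$ with $m=k-\beta$. Thus only $\beta$ matters and the scalar parts $a$ drop out entirely.

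Next I would analyze this congruence. Writing $\bar g,\bar J$ for reductions modulo $p^m$, the condition says that $\bar g\bar J\bar g^{-1}$ commutes with $\bar J$, hence lies in the centralizer $W\bmod p^m$ of $\bar J$ in $M_2(\Z/p^m\Z)$; being trace-zero with characteristic polynomial $x^2-\epsilon$, it must equal $c\bar J$ with $c^2=1$, so $c=\pm 1$ since $p$ is odd. Recognizing $\sm{1 & 0 \\ 0 & -1}$ as the element conjugating $\bar J$ to $-\bar J$, this is exactly the statement $\bar g\in N_{\mathrm{ns}}(p^m)$, and conversely every element of $N_{\mathrm{ns}}(p^m)$ satisfies the congruence. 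Hence the matrices $g$ I am counting are precisely those whose reduction modulo $p^m$ lands in $N_{\mathrm{ns}}(p^m)$.

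The count is then a clean fiber computation: since reduction $\pi_m\colon\GL_2(\Z/p^k\Z)\to\GL_2(\Z/p^m\Z)$ is surjective, the number of admissible $g$ equals $\#N_{\mathrm{ns}}(p^m)\cdot \#\GL_2(\Z/p^k\Z)/\#\GL_2(\Z/p^m\Z)$. Plugging in $\#\GL_2(\Z/p^n\Z)=p^{4n-3}(p-1)(p^2-1)$, $\#C_{\mathrm{ns}}(p^n)=p^{2n-2}(p^2-1)$ and $\#N_{\mathrm{ns}}(p^n)=2\,\#C_{\mathrm{ns}}(p^n)$, then dividing by $\#C_{\mathrm{ns}}(p^k)$, I obtain the value $2p^{2(k-m)}=2p^{2\beta}$ for $1\le m\le k$, i.e. $2p^{2j}$ with $j=\beta\in\{0,\dots,k-1\}$. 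The degenerate case $\beta=k$ (scalar $R$, so $m=0$ and the congruence is vacuous) gives $\#\GL_2(\Z/p^k\Z)/\#C_{\mathrm{ns}}(p^k)=p^{2k}-p^{2k-1}$, while $R=\GL_2(\Z/p^k\Z)$ gives $0$. To finish I would exhibit, for each $j\in\{0,\dots,k-1\}$, the cyclic group $R=\langle I+p^jJ\rangle\subseteq C_{\mathrm{ns}}(p^k)$, whose $J$-coefficient has valuation exactly $j$ on every power prime to $p$, so that $\beta=j$; together with the scalar and the $\GL_2$ examples this realizes every listed value.

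I expect the main obstacle to be the middle step: recognizing that the a priori messy pair of quadratic conditions on the entries of $g$ collapses to the single clean condition that $g$ reduces into $N_{\mathrm{ns}}(p^m)$. The crucial ingredients are that $C_{\mathrm{ns}}$ is a centralizer, so that conjugation is governed by the one element $J$; that the scalar parts are invisible to the commutator, so that the whole constraint is encoded by $\beta$ alone; and the characteristic-polynomial argument pinning $\bar g\bar J\bar g^{-1}$ to $\pm\bar J$. Once this identification is in place, the order bookkeeping and the existence examples are routine.
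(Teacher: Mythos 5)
Your argument is correct, and it reaches the answer by a genuinely different route from the paper. The paper works with an explicit set of coset representatives $\sm{1 & v \\ 0 & z}$ for $C_{\mathrm{ns}}(p^k)$ in $\GL_2(\Z/p^k\Z)$, computes the conjugate of a general element $\sm{a & \epsilon b \\ b & a}$ by such a representative, and reads off the two scalar conditions $bv=0$ and $b(z^2-1)=0$, which it then counts directly: $p^{\beta}$ choices for $v$ and $2p^{\beta}$ for $z$, giving $2p^{2\beta}$, with the degenerate case $\beta=k$ giving $p^{2k}-p^{2k-1}$. You instead exploit the ring structure: $C_{\mathrm{ns}}(p^k)$ is the unit group of the centralizer $W=(\Z/p^k\Z)[J]$ of $J=\sm{0 & \epsilon \\ 1 & 0}$, so the whole condition $gRg^{-1}\subseteq C_{\mathrm{ns}}(p^k)$ collapses to $[gJg^{-1},J]\equiv 0\pmod{p^{k-\beta}}$, i.e.\ to $\pi_{k-\beta}(g)\in N_{\mathrm{ns}}(p^{k-\beta})$, and the count becomes a fiber computation for the reduction map. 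I verified the trace/determinant step pinning $\bar g\bar J\bar g^{-1}$ to $\pm\bar J$, the identification of that condition with membership in the normalizer, and the final arithmetic $2p^{2(k-m)}=2p^{2\beta}$; all are sound. Your approach is more conceptual and in particular explains the factor $2$ structurally (the two square roots of $1$, i.e.\ the two cosets of $C_{\mathrm{ns}}$ in its normalizer), whereas the paper's computation is more elementary but requires first establishing the coset-representative lemma. One small overstatement: for $j=0$ the $J$-coefficient of $(I+J)^n$ need not have valuation exactly $0$ for every $n$ prime to $p$; but this is harmless, since $\beta$ is a minimum and the generator itself already has $J$-coefficient of valuation exactly $j$, while every element of $\langle I+p^jJ\rangle$ has $J$-coefficient divisible by $p^j$, so $\beta=j$ as needed.
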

\begin{proof}
    Let $H=C_{\mathrm{ns}}(p^k)$. We start by calculating a suitable set of coset representatives for $\GL_2(\Z/p^k\Z)/H$. 
     \begin{lemma}
         The matrices of the form $\sm{1 & v \\ 0 & z}$ for $v\in \Z/p^k\Z$ and $u\in (\Z/p^k\Z)^\times$ form a full list of coset representatives of $C_{\mathrm{ns}}(p^k)$ inside $\GL_2(\Z/p^k\Z)$. 
     \end{lemma}
     \begin{proof}
         The number of listed matrices coincides with the index $[\GL_2(\Z/p^k\Z):C_{\mathrm{ns}}(p^k)]$, so it suffices to prove that each matrix can be written in the form $h\cdot r$, where $r$ is a matrix from the list and $h\in C_{\mathrm{ns}}(p^k)$. 

         Start with any matrix $\sm{u & v \\ w & z} \in \GL_2(\Z/p^k\Z)$. If $u\equiv 0 \pmod p$, then $w$ is invertible, so if we multiply from the left by $\sm{0 & \epsilon \\ 1 & 0}$, we get a matrix with top left entry $\epsilon w$, which is invertible. Thus, we may assume $u$ is invertible.

         Furthermore, take $a$ and $b$ such that $a$ is invertible and $bu+aw=0$. Then multiplying on the left by $\sm{a & \epsilon b \\ b & a}$ gives us a matrix whose bottom left entry equals $0$. Finally, multiplying by a suitable scalar matrix yields a matrix whose top left entry is $1$ and bottom left entry is $0$, which proves the claim. 
     \end{proof}
    Thus, for a subgroup $R$ of $H$, we need to find the number of matrices of the form $g=\sm{1 & v \\ 0 & z}$ such that $gRg^{-1}\subseteq H$. 
     
     We start by calculating $\sm{1 & v \\0 & z}\sm{a & \epsilon b \\ b & a}\sm{1 & v \\ 0 & z}^{-1}$. We obtain, up to multiplication by scalars, $$\begin{pmatrix} az+bvz & b(\epsilon - v^2) \\
bz^2 & az-bvz\end{pmatrix}.$$
This matrix lies in $H$ if and only if the following holds: \begin{align*}
bv &=0, \\
b(z^2-1)&=0.
\end{align*}

We see that the number of solutions $\sm{1 & v \\0 & z}$ depends only on the least possible value of $v_p(b)$. Let $\beta=\min_{g \in R}(v_p(b))$. Note that taking $R$ to be generated by $\sm{1 & \epsilon p^j \\ p^j & 1}$ achieves $\beta=j$, so $\beta$ can take on any value between $0$ and $k$. 

If $\beta=k$, every matrix $\sm{1 & v \\0 & z}$ is a solution, and we obtain the number $p^{2k}-p^{2k-1}$. Now suppose $\beta<k$. Then $v$ is any number divisible by $p^{k-\beta}$, so there are $p^\beta$ options. On the other hand, either $z-1$ or $z+1$ is divisible by $p^{k-\beta}$, so there are $2p^{\beta}$ options. In total, there are $2p^{2\beta}$ solutions.
\end{proof}

\begin{proposition}
     Let $p$ be an odd prime and $k$ a positive integer.  Let $$
\begin{aligned}
A'_k(p)&=\{0\} \cup \left\{ \frac{p^{2k}-p^{2k-1}}{2} \right\}\cup \{p^{2j} \mid j\in \{0,1,\ldots,k-1\}\},\\
B'_k(p)&=\left\{\frac{p^k+p^{k-1}}{2}+1\right\},\\
C'_k(p)&=\begin{cases}
\{2\}, & \text{if } p\equiv 1\pmod{4},\\[4pt]
\{3\}, & \text{if } p\equiv 3\pmod{4},
\end{cases}\\
D'_k(p)&=\left\{1,\ \frac{p^k+p^{k-1}}{2},\ \frac{p^k-p^{k-1}}{2}\right\},\\
E'_k(p)&=\{p^j \mid 0<j<k\},\\
F'_k(p)&=\begin{cases}
\{p^{\lfloor k/2 \rfloor}+1\}\ \cup\ \{\,2p^t+1 \mid 0<2t<k\,\}, & \text{if } p\equiv \pm 3 \pmod 8,\\[4pt]
\emptyset, & \text{if } p\equiv \pm 1 \pmod 8,
\end{cases}\\
G'_k(p)&=\begin{cases}
\{1,2,3\}, & \text{if } p>3,\\[4pt]
\{1,3\}, & \text{if } p=3,
\end{cases}\\
H'_k(p)&=\begin{cases}
\{1,2\}, & \text{if } p\equiv 1\pmod{4},\\[4pt]
\{1\}, & \text{if } p\equiv 3\pmod{4}.
\end{cases}
\end{aligned}
$$ Then 
    $$S(N_{\mathrm{ns}}(p^k))=\bigcup_{L=A}^H L'_k(p).$$
\end{proposition}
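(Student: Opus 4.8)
The plan is to mirror the proof for $N_{\mathrm s}(p^k)$, but using the coset description of the non-split Cartan. First I would reduce to $R\le H:=N_{\mathrm{ns}}(p^k)$: if $R$ is not conjugate into $H$ the value is $0\in A'_k(p)$, and conjugating $R$ does not change the counted quantity, so we may assume $R\le H$. Next, rather than summing over all $g$, I would exploit that the set $\{g : gRg^{-1}\subseteq H\}$ is a union of left $C_{\mathrm{ns}}(p^k)$-cosets (indeed of left $H$-cosets). Using the coset representatives $\sm{1 & v \\ 0 & z}$, with $v\in\Z/p^k\Z$ and $z\in(\Z/p^k\Z)^\times$, established in the proof of the previous proposition, the desired value equals $\tfrac12$ of the number of pairs $(v,z)$ for which $\sm{1 & v \\ 0 & z}$ conjugates $R$ into $H$. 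The factor $\tfrac12$ appears because $w=\sm{1 & 0 \\ 0 & -1}$ normalises $C_{\mathrm{ns}}(p^k)$ and the representatives $\sm{1 & v \\ 0 & z}$, $\sm{1 & v \\ 0 & -z}$ exhaust the two $C_{\mathrm{ns}}(p^k)$-cosets inside a single $H$-coset, which are simultaneously valid or invalid by left $H$-invariance.

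The core computation is to write down, for each generator $r$ of $R$, the condition that $\sm{1 & v \\ 0 & z}\,r\,\sm{1 & v \\ 0 & z}^{-1}$ lies in $H=C_{\mathrm{ns}}(p^k)\sqcup w\,C_{\mathrm{ns}}(p^k)$. I would split $r$ into the two types: Cartan elements $\sm{a & \epsilon b \\ b & a}$ and anti-Cartan elements $\sm{a & \epsilon b \\ -b & -a}$ representing the nontrivial coset. Extending the conjugation already done for $C_{\mathrm{ns}}(p^k)$, a Cartan $r$ has its conjugate in $C_{\mathrm{ns}}(p^k)$ iff $bv=0$ and $b(z^2-1)=0$, and in $w\,C_{\mathrm{ns}}(p^k)$ iff $a=0$ and $b(\epsilon(1+z^2)-v^2)=0$; an anti-Cartan $r$ yields an analogous pair, the $w$-coset clause taking the conic form $b(\epsilon+v^2-\epsilon z^2)=2av$. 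A key simplifying observation is that any anti-Cartan element squares to a scalar, so $R\cap C_{\mathrm{ns}}(p^k)$ has index $1$ or $2$ in $R$ and $R$ is generated by its Cartan part together with at most one anti-Cartan element.

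With these conditions in hand I would run the case analysis. As in the $N_{\mathrm s}$ proof, a pair $(v,z)$ is valid if for every generator it satisfies the $C_{\mathrm{ns}}$-clause or the $w\,C_{\mathrm{ns}}$-clause, and one cannot merely add the two solution counts, since a single $(v,z)$ may use different clauses for different generators; the overlaps must be tracked carefully. The governing invariants are $\beta=\min_{r}v_p(b)$, whether $R$ contains anti-Cartan elements, and whether $R$ contains ``trace-zero'' Cartan elements $\sm{0 & \epsilon b \\ b & 0}$ (those with $a=0$), which are exactly the ones able to land in $w\,C_{\mathrm{ns}}(p^k)$. Sorting by these invariants produces the families $A'_k$ through $H'_k$. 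The arithmetic enters through solving the quadratic constraints: the dichotomy $p\equiv 1,3\pmod 4$ (governing $C'_k,D'_k,H'_k$) reflects whether $-1$ is a square, controlling whether both square roots are simultaneously available and hence whether a count doubles; the finer dichotomy $p\equiv\pm1,\pm3\pmod 8$ (governing $F'_k$) reflects whether $2$ is a square modulo $p$, via the second supplement to quadratic reciprocity, and decides solvability of the conic clause arising from a lone anti-Cartan generator; the exceptional value at $p=3$ in $G'_k$ stems from a coincidence among small residues.

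Finally, for each predicted value I would exhibit an explicit $R$ realising it, choosing generators among $\sm{1 & \epsilon p^j \\ p^j & 1}$, $\sm{0 & \epsilon \\ 1 & 0}$, $\sm{1 & 0 \\ 0 & -1}$ and their scalar multiples (paralleling the explicit constructions in the $C_{\mathrm{ns}}$ and $N_{\mathrm s}$ proofs), computing the valid count and matching $\tfrac12\#\{(v,z)\}$ against the claim. I expect the main obstacle to be the systematic enumeration of solutions $(v,z)$ over $\Z/p^k\Z$ of the quadratic and conic conditions with prescribed $p$-adic valuations, together with the bookkeeping of overlaps between the $C_{\mathrm{ns}}$- and $w\,C_{\mathrm{ns}}$-clauses; the mod-$8$ subcase for $F'_k$, where solvability hinges on $2$ being a quadratic residue, is the most delicate point.
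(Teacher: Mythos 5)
Your plan follows the paper's proof essentially verbatim: the same reduction to $R\le N_{\mathrm{ns}}(p^k)$, the same coset representatives $\sm{1 & v \\ 0 & z}$ with the factor $\tfrac12$ for $z\mapsto -z$, the same pair of membership conditions for Cartan and anti-Cartan generators (including the conic $b(\epsilon+v^2-\epsilon z^2)=2av$), and the same case analysis governed by $\beta=\min_r v_p(b)$, the presence of anti-Cartan elements, and trace-zero Cartan elements, with the mod-$4$ and mod-$8$ dichotomies entering exactly where you predict. The only ingredient you do not anticipate is that realizing all three values in $G'_k(p)$ for $p>3$ forces one to show that two fixed quadratics in the parameter can simultaneously assume any prescribed pair of quadratic characters, which the paper establishes via a Weil-bound character-sum estimate for $p\ge 13$ together with a direct check for the remaining small primes.
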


\begin{proof}
    Let $H=N_{\mathrm{ns}}(p^k)$. Let $R$ be a subgroup of $H$. The list of coset representatives for $N_{\mathrm{ns}}(p^k)$ is the same as the one for $C_{\mathrm{ns}}(p^k)$, except $\sm{1 & v \\ 0 & z}$ and $\sm{1 & v \\ 0 & -z}$ yield the same coset. We will find the number of matrices $g=\sm{1 & v \\ 0 & z}$ satisfying $gRg^{-1}\subseteq H$, and then divide this number by $2$.
    
   We have already calculated the form of conjugates in $C_{\mathrm{ns}}(p^k)$ by $\sm{1 & v \\ 0 & z}$ as 
    $$\begin{pmatrix} az+bvz & b(\epsilon - v^2) \\
bz^2 & az-bvz\end{pmatrix}.$$
Similarly, calculating $\sm{1 & v \\0 & z}\sm{a & \epsilon b \\ -b  & -a}\sm{1 & v \\ 0 & z}^{-1}$ yields 
  $$\begin{pmatrix} az-bvz & bv^2-2av+\epsilon b \\
-bz^2 & bvz-az\end{pmatrix}.$$

\noindent \textbf{Case I.} $R$ is contained in $C_{\mathrm{ns}}(p^k)$. 

\noindent \textbf{Case I.a.} For every solution $g$, one has $gRg^{-1}\subseteq C_{\mathrm{ns}}(p^k)$. This case is equivalent to the case from part (c), and we obtain the same set of numbers, except we divide them by $2$ because the index is twice smaller.

\noindent \textbf{Case I.b.} There exists a solution $g$ with $gRg^{-1}\not \subseteq C_{\mathrm{ns}}(p^k)$. This means that $R$ contains a matrix $m=\sm{a & b \\ \epsilon b & a}$ such that  $$\begin{pmatrix} az+bvz & b(\epsilon - v^2) \\
bz^2 & az-bvz\end{pmatrix}.$$ belongs to $H\setminus C_{\mathrm{ns}}(p^k)$. This is equivalent to 
\[
\begin{cases}
2az=0, \\[4pt]
b(\epsilon z^2+\epsilon)=bv^2.
\end{cases}
\]
Since $2z$ is invertible, it follows that $a=0$. Hence, $b$ is invertible and we get the equation \begin{equation} \label{bla1}\epsilon z^2+\epsilon=v^2.\end{equation}

Furthermore, if $g$ is a solution such that $gmg^{-1}$ lies in $C_{\mathrm{ns}}(p^k)$, it follows from the proof of part (c) that $g\in N_{\mathrm{ns}}(p^k)$ (note that $\beta=0$ in that case). Thus, there is exactly one class of solutions $g$ such that $gmg^{-1} \in C_{\mathrm{ns}}(p^k)$. Any other solution satisfies \eqref{bla1}.

Now suppose that $R$ contains a nonscalar matrix $\sm{a & \epsilon b \\b & a}$ with $a\neq 0$. This means that the conjugate of this matrix by $\sm{1 & v \\ 0 & z}$ lies in $C_{\mathrm{ns}}(p^k)$. Part (c) then implies that $v\equiv 0 \pmod p$ and $z^2\equiv 1 \pmod p$. However, then \eqref{bla1} implies $2\epsilon \equiv 0 \pmod p$, a contradiction. Thus, it suffices to count the number of solutions of the equation \eqref{bla1} for which $z$ is invertible. However, note that $z$ is always invertible, since otherwise we would have $\epsilon \equiv v^2 \pmod p$.

It follows from a standard character calculation argument that the number of solutions $(v,z)$ modulo $p$ to  \eqref{bla1} is $p-\chi_p(\epsilon)=p+1$, where $\chi_p$ is the quadratic character mod $p$. To go from $p$ to $p^k$, note that $z$ is always invertible, so Hensel's lemma implies that any solution in $\Z/p\Z$ has exactly $p^{k-1}$ lifts to $\Z/p^k\Z$. In total, after dividing by $2$ to account for the smaller index, we obtain $\frac{p^k+p^{k-1}}{2}$ solutions. However, we must also add the number of matrices $m=\sm{1 & v \\ 0 & z}$ such that $m\sm{0 & \epsilon b \\\ -b & 0}m^{-1}$ lies in $C_{\mathrm{ns}}(p^k)$. This is equivalent to $v=0$ and $z^2=1$, and since $z$ and $-z$ define the same coset, the total number of solutions is $\frac{p^k+p^{k-1}}{2}+1$.

This number is achieved by taking $R$ to be the group generated by $\sm{0 & \epsilon  \\ 1 & 0}.$

\noindent \textbf{Case II.} $R$ is not contained in $C_{\mathrm{ns}}(p^k)$. Note that $R_1:=R\cap C_{\mathrm{ns}}(p^k)$ is an index $2$ subgroup of $R$. We know that either any solution $g$ satisfies $gR_1g^{-1}\subseteq C_{\mathrm{ns}}(p^k)$, or any matrix $\sm{a & \epsilon b \\b & a}$ in $R_1$ satisfies either $a=0$ or $b=0$. 

Let $R_2=R\setminus R_1$. Then $R_2=rR_1$ for some $r=\sm{a_0 & \epsilon b_0 \\ -b_0 & -a_0}\in R_2$.  

\noindent \textbf{Case II.a.} There exists a solution $g$ such that $grg^{-1}\in C_{\mathrm{ns}}(p^k)$. We claim that $R_1$ consists only of matrices $\sm{a & \epsilon b \\ b & a}$ such that $a=0$ or $b=0$.

 Since $grg^{-1} \in C_{\mathrm{ns}}(p^k)$, we have $$\begin{cases}
    a_0z=b_0vz, \\
    -\epsilon b_0z^2=b_0v^2-2a_0v+\epsilon b_0.
\end{cases}    
$$
Since $z$ is invertible, this simplifies to $$\begin{cases}
    a_0=b_0v, \\
    -\epsilon b_0z^2=-b_0v^2+\epsilon b_0.
\end{cases}$$
Note that $b_0$ is invertible, since otherwise $a_0$ would not be. Thus, $-\epsilon z^2+v^2=\epsilon.$
Consider a matrix $m=\sm{a & \epsilon b \\ b & a}$ in $R_1$. If $gmg^{-1}\not \in C_{\mathrm{ns}}(p^k)$, then $a=0$. Otherwise, if $b\neq 0$, we can use the same argument as in part (c), to obtain $v\equiv 0 \pmod p$ and $z^2\equiv 1 \pmod p$. However, this implies $-\epsilon \equiv \epsilon \pmod p$, a contradiction.   

Thus, each matrix $\sm{a &  \epsilon b \\ b & a}$ in $R_1$ satisfies either $a=0$ or $b=0$.

\noindent \textbf{Case II.a.(i)} Each matrix in $R_1$ is a scalar matrix. This means that a solution is any matrix $g$ such that $grg^{-1} \in C_{\mathrm{ns}}(p^k)$ or $grg^{-1} \in H\setminus C_{\mathrm{ns}}(p^k)$. Thus, we need to find the sum of the numbers of solutions to the system $$\begin{cases}
a_0=b_0v, \\
v^2=\epsilon z^2+\epsilon,
\end{cases}$$
and add to that number the number of solutions of the equation\begin{equation} \label{bla2}
    \epsilon b_0 z^2=b_0 v^2-2a_0v+\epsilon b_0,
\end{equation}
with $a_0$ and $b_0$ fixed and at least one of them invertible, and with the assumption that the first system has at least one solution. Note that in fact the first system has at most one solution, since $v$ is uniquely determined from the first equation, and $z^2$ is uniquely determined from the second equation. Also, this equation does have a solution if and only if $$\frac{a_0^2}{\epsilon b_0^2}-1$$ is a square modulo $p$. Note that this is always possible to achieve, as there always exist elements $x$ and $x+1$ in $(\Z/p\Z)^\times$ such that $x$ is a residue and $x+1$ a nonresidue.

It remains to find the number of solutions to \eqref{bla2} for which $z$ is invertible. We can divide the equation by $b_0$ since it is also invertible. We get $$\epsilon z^2=v^2-2\frac{a_0}{b_0}v+\epsilon.$$
Again, using standard character counting argument, it follows that there are $p+1$ solutions modulo $p$ to this equation. Note that there are no solutions with $z=0$, since we would have $$\left(v-\frac{a_0}{b_0} \right)^2=\epsilon\left(\frac{a_0^2}{\epsilon b_0^2}-1\right),$$ and the right hand side is not a square mod $p$. Thus, we get $p+1$ solutions modulo $p$ in this case. There are $p^{k-1}$ lifts of each solution, so we obtain $p^k+p^{k-1}$ solutions.  Dividing by $2$ to account for $z$ and $-z$ being the same, and adding $1$ for the solution of the system, we again obtain the number $\frac{p^k+p^{k-1}}{2}+1$.

\noindent \textbf{Case II.a.(ii)} There exists a nonscalar matrix in $R_1$.

As in the previous case, we conclude that there is exactly one solution to $$\begin{cases}
a_0=b_0v, \\
v^2=\epsilon z^2+\epsilon,
\end{cases}.$$

Consider a nonscalar matrix $\sm{0 & \epsilon b \\ b & 0} \in R_1$. Note that if $\sm{1 & v \\0 & z}$ is the solution to the system above, we have $\sm{1 & v \\0 & z }\sm{0 & \epsilon b \\ b & 0}\sm{1 & v \\ 0 & z}^{-1}\in H\setminus C_{\mathrm{ns}}(p^k)$, so the solution to the system remains valid. Also, there is always the trivial solution $v=0$, $z^2=1$. 

Suppose that there is some other nontrivial solution $(v_0, z_0)$. Then $\epsilon z_0^2+\epsilon= v_0^2$ by looking at the conjugate of $\sm{0 & \epsilon b \\ b & 0 }$. On the other hand, looking at the conjugate of $r=\sm{a_0 & \epsilon b_0 \\ -b_0 & -a_0}$, it follows that $$\epsilon b_0 z_0^2=b_0 v_0^2-2a_0v_0+\epsilon b_0.$$ We again divide by $b_0$, and then eliminate $\epsilon z^2$ from the equation to obtain $$v_0^2-\epsilon=v_0^2-2\frac{a_0}{b_0}v_0+\epsilon,$$ from where it follows that $$v_0=\frac{\epsilon b_0}{a_0}.$$
Plugging this back in the expression $\epsilon z_0^2+\epsilon=v_0^2$, it follows that $$\epsilon z_0^2+\epsilon=\frac{b_0^2\epsilon^2}{a_0^2},$$ and finally $$z_0^2=\frac{\epsilon b_0^2}{a_0^2}-1.$$
This equation has a solution if and only if $\frac{\epsilon b_0^2}{a_0^2}-1$ is a square, which is equivalent with $\epsilon b_0^2-a_0^2$ being a square. Since $\frac{a_0^2}{\epsilon b_0^2}-1$ is a square, it follows that $a_0^2-\epsilon b_0^2$ is not a square. Thus, it depends on whether $-1$ is a square or not. 

If $-1$ is a square, then $\epsilon b_0^2-a_0^2$ is not a square and we obtain $2$ solutions in total. 

If $-1$ is not a square, then $\epsilon b_0^2-a_0^2$ is a square and we obtain an additional solution, for a total of $3$ solutions.

\noindent \textbf{Case II.b.} All solutions $g$ satisfy $grg^{-1} \in H\setminus C_{\mathrm{ns}}(p^k)$. In this case, any solution satisfies the equation \eqref{bla2}, which we write again for clarity reasons: $$\epsilon b_0 z^2=b_0 v^2-2a_0v+\epsilon b_0.$$

\noindent \textbf{Case II.b.(i)} All solutions $g$ also satisfy $gR_1g^{-1}\subseteq C_{\mathrm{ns}}(p^k)$. 

Let $\beta=\min_{g \in R_1}v_p(b)$ as before. If $\beta=0$, we obtain only the trivial solution, as before. Otherwise, we need to count the number of solutions to \eqref{bla2} which also satisfy $p^{k-\beta} \mid z^2-1$ and $p^{k-\beta} \mid v$.

We start by solving the special case $\beta=k$. In that case, we only need to count the number of spolutions to \eqref{bla2}. 

If $b_0$ is not invertible, then $a_0$ is invertible. Note that then $v=0$ is a solution modulo $p$, and Hensel's lemma implies that this solution has a unique lift, for any value of $z^2$. There are $\frac{p^{k-1}(p-1)}{2}$ choices for $z^2$, so this is the number of solutions.

If $b_0$ is invertible, then using the same arguments as before, there are $p+1$ solutions $(z,w)$ mod $p$. Of these solutions, there are either $0$ or $2$ solutions with $z=0$ which we need to exclude, depending on whether $4a_0^2-4\epsilon b_0$ is a square or not. If $4a_0^2-4\epsilon b_0$ is a square, then we obtain $\frac{(p-1)p^{k-1}}{2}$ solutions as before. If it is not a square, we obtain $\frac{(p+1)p^{k-1}}{2}$ solutions.

Now suppose $0<\beta<k$. Note that there is then only one solution mod $p$, which is $z^2=1$, $v=0$. 

If $b_0$ is not invertible, then again by Hensel's lemma, for any choice of $z$ such that $z^2-1$ is divisible by $p^{k-\beta}$, there is a unique lift of the solution $v=0$ to a solution modulo $p^k$. It is easy to see from the equation that this lift satisfies $p^{k-\beta} \mid v$. Thus, in this case we get $p^{\beta}$ solutions, one for each choice of $z^2$.

If $b_0$ is invertible, then for any choice of $v$ such that $p^{k-\beta} \mid v$, there is a unique lift of the solution $z^2=1$ to a solution modulo $p^k$. Again, it is easy to see from the equation that this lift satisfies $p^{k-\beta} \mid z^2-1$. Again, we obtain $p^\beta$ solutions.

\noindent \textbf{Case II.b.(ii)} Every matrix $\sm{a & \epsilon b \\ b & a}\in R_1$ satisfies $a=0$ or $b=0$, and there exists a matrix with $a=0$. In this case, any nontrivial solution $g$ satisfies $g \sm{0 & \epsilon b \\ b & 0}g^{-1}\in H\setminus C_{\mathrm{ns}}(p^k)$, which gives us the equality
$\epsilon z^2+\epsilon =v^2$. This allows us to eliminate $z^2$ from \eqref{bla2}, and obtain $$-2\epsilon b_0=b_0 v^2-2a_0v.$$

If $b_0$ is invertible, the equation can be rewritten as $$v^2-2\frac{a_0}{b_0}v+2\epsilon=0.$$

This equation has a double root modulo $p$ if and only if $\frac{a_0^2}{b_0^2}\equiv 2\epsilon \pmod p$. If $2$ is a square modulo $p$, this is impossible and we only get the trivial solution. 

If $2$ is not a square modulo $p$, then this is possible. In that case, $v\equiv \frac{a_0}{b_0} \pmod p$ and $\frac{a_0^2}{b_0^2}\equiv 2\epsilon \pmod p$. Note that this implies $z^2\equiv 1 \pmod p$, so for any value of $v$ that satisfies the conditions, there is a unique suitable value of $z$.

It suffices to count the number of solutions to $$v^2-2\frac{a_0}{b_0}v+\frac{a_0^2}{b_0^2}-px=0,$$ where $p x=\frac{a_0^2}{b_0^2}-2\epsilon$. If $s=v-\frac{a_0}{b_0}$, this is equivalent with $$s^2=px.$$
There may be $0$ solutions to this equation. Suppose that there is at least $1$ solution. 

If $px=0$, there are $p^{\lfloor k/2 \rfloor}$ solutions. Adding $1$ from the trivial solution, we obtain $p^{\lfloor k/2 \rfloor}+1$. 

Otherwise, let $px=p^{2t}c^2$ where $2t<k$ and $c$ invertible. Then $v_p(s)=t$, and $s=p^t r$ for some invertible $r$, and $r^2\equiv c^2 \pmod p^{k-2t}$. There are $2$ choices for $r \pmod {p^{k-2t}}$, hence $2p^{t}$ choices mod $p^{k-t}$. In total, there are $2p^{t}$ choices for $v$. Adding $1$ from the trivial solution, we obtain $2p^t+1$.

If the equation has only single roots, then its discriminant needs to be a square modulo $p$ to have solutions. That is, $\frac{a_0^2}{b_0^2}-2\epsilon$ needs to be a square. As before, this can be achieved for some $a_0$ and $b_0$.  In that case, there are exactly $2$ solutions $v_1$ and $v_2$ by Hensel's lemma. It remains to check whether $z^2=\frac{v_i^2}{\epsilon}-1$ is a square. Depending on whether none, one or both of them is a square, we obtain $1$, $2$ or $3$ solutions respectively. We claim that we can always choose $\frac{a_0}{b_0}$ for each of the possibilities when $p$ is large enough. 

Let $2\frac{a_0}{b_0}=c$. Then $c^2-8\epsilon$ is a square. Let $d^2=c^2-8\epsilon$. Then $\frac{v_1^2}{\epsilon}-1=\frac{(d+c)^2}{4\epsilon}-1$ and $\frac{v_2^2}{\epsilon}-1=\frac{(d-c)^2}{4\epsilon}-1.$

Multiplying everything by suitable squares and setting $c-d=A$ and $c+d=\frac{8\epsilon }{A}$, we obtain the following two expressions: $16\epsilon-A^2$ and $\epsilon A^2-4\epsilon^2$. 

\begin{lemma}
    Let $p>11$ be a prime, and let $f(x)$ and $g(x)$ be irreducible quadratic polynomials in $\F_p[x]$ with no common factors. Then there is some $A\in \F_p$ such that $f(A)$ and $g(A)$ are both squares in $\F_p^\times$.
\end{lemma}
\begin{proof}
Write \(\chi\) for the quadratic character modulo \(p\).

For signs \(s,t\in\{+1,-1\}\) let
\[
N_{s,t}=\#\{A\in\F_p:\ \chi(f(A))=s,\ \chi(g(A))=t\}.
\]
Using the standard identity \(\mathbf{1}_{\chi(f(x))=s}=(1+s\chi(f(x)))/2\) (and similarly for \(\chi_g\)) we obtain
\[
N_{s,t}=\sum_{A\in\F_p}\frac{1+s\chi(f(A))}{2}\cdot\frac{1+t\chi(g(A))}{2}
=\frac{1}{4}\Big(p+s\Sigma_f+t\Sigma_g+st\Sigma_{fg}\Big),
\]
where
\[
\Sigma_f=\sum_{A\in\F_p}\chi(f(A)),\qquad
\Sigma_g=\sum_{A\in\F_p}\chi(g(A)),\qquad
\Sigma_{fg}=\sum_{A\in\F_p}\chi(f(A)g(A)).
\]

For an irreducible quadratic polynomial \(q(x)=ax^2+bx+c\) one has \(\sum_{x\in\F_p}\chi(q(x))=-\chi(a).\) In particular, \(|\Sigma_f|=|\Sigma_g|=1\).  
The product \(f(x)g(x)\) is a quartic polynomial with no repeated roots, so the Weil bound gives us
\[
|\Sigma_{fg}|\leq  3\sqrt p.
\]

Combining these estimates we obtain, for every sign pair \((s,t)\),
\[
N_{s,t}\ge \frac{1}{4}\Big(p - \big(|\Sigma_f|+|\Sigma_g|+|\Sigma_{fg}|\big)\Big)
\ge \frac{1}{4}\big(p - 2 - 3\sqrt p\big).
\]
The right-hand side is positive whenever \(p-2-3\sqrt p>0\), which holds for every prime \(p\ge 13\). Hence for \(p\ge 13\) each \(N_{s,t}\) is positive. In particular, there exists \(A\) with \(\chi_f(A)=\chi_g(A)=+1\), i.e.\ both \(f(A)\) and \(g(A)\) are nonzero squares.
\end{proof}

Applying this for polynomials $16\epsilon-x^2$ and $\epsilon x^2-4\epsilon^2$, we obtain that each combination of squares and non-squares occurs. It remains to check primes up to $11$. A direct check shows that for $p=3$ either both polynomials are squares or non-squares, while for $p=5,7,11$ there can be $0,1$ or $2$ squares. In conclusion, adding the trivial solution, we obtain $1$, $2$ or $3$ solutions, unless $p=3$ in which case we obtain $1$ or $3$ solutions.

Now suppose that $b_0$ is not invertible. Then $a_0$ is invertible and $v$ is divisible by $p$. By Hensel's lemma, since the derivative of $b_0v^2-2a_0v+2\epsilon b_0$ at $v=0$ is nonzero, it follows that there is a unique solution $v$, and $v\equiv 0 \pmod p$. It follows that $\epsilon z^2+\epsilon\equiv 0 \pmod p$, so there is $1$ nontrivial solution if $-1$ is a square, and $0$ nontrivial solutions otherwise. In any case, we obtain either $1$ or $2$.
\end{proof}

\section{Number of rational CM points on a modular curve}\label{sec5}
The goal of this section is to provide an algorithm for the number of rational CM points on a given modular curve $X_H$. We start by recalling known results about images of Galois representations of elliptic curves with complex multiplication, due to Lozano-Robledo \cite{alvarocm}. In what follows, $K$ is an imaginary quadratic field, $\mathcal O_{K,f}$ is the order of $K$ of conductor $f$, and $j_{K,f}$ is any value of $j$-invariant of an elliptic curve with CM by $\mathcal O_{K,f}$. Note that for rational $j$-invariants, $j_{K,f}$ is uniquely determined.

\begin{theorem}\cite[Theorem 1.1]{alvarocm}\label{alvaro1}
    Let $E/\Q(j_{K,f})$ be an elliptic curve with CM by $\mathcal{O}_{K,f}$, let $N\ge 3$, and let
\[
\rho_{E,N}:\Gal(\overline\Q/\Q(j_{K,f}))\to \Aut(E[N])\cong \GL(2,\Z/N\Z).
\]
We define groups of $\GL_2(\Z/N\Z)$ as follows:
\begin{itemize}
    \item If $\Delta_K f^2 \equiv 0 \pmod 4$, let $\delta=\Delta_K f^2/4$ and $\phi=0$.
    \item If $\Delta_K f^2 \equiv 1 \pmod 4$, let $\delta=(\Delta_K-1)f^2/4$ and $\phi=f$.
\end{itemize}
Define the Cartan subgroup $\mathcal C_{\delta,\phi}(N)\subset\GL_2(\Z/N\Z)$ by
\[
\mathcal C_{\delta,\phi}(N)=
\left\{
\begin{pmatrix}
a+b\phi & b\\[4pt]
\delta b & a
\end{pmatrix}
: a,b\in\Z/N\Z,\ a^2+ab\phi-\delta b^2\in(\Z/N\Z)^\times
\right\}.
\]
Let
\[
\mathcal N_{\delta,\phi}(N)=\langle \mathcal C_{\delta,\phi}(N),\ c_\phi\rangle,\qquad
c_\phi=\begin{pmatrix}-1&0\\[4pt]\phi&1\end{pmatrix}.
\]
Then there is a $\Z/N\Z$-basis of $E[N]$ such that the image of $\rho_{E,N}$ is contained in $\mathcal N_{\delta,\phi}(N)$. Moreover,
\begin{enumerate}
    \item $\mathcal C_{\delta,\phi}(N)\cong (\mathcal{O}_{K,f}/N\mathcal{O}_{K,f})^\times$ is a subgroup of index $2$ in $\mathcal N_{\delta,\phi}(N)$, and
    \item the index of the image of $\rho_{E,N}$ in $\mathcal N_{\delta,\phi}(N)$ coincides with the order of the Galois group
    $\Gal(K(j_{K,f},E[N])/K(j_{K,f},h(E[N])))$ for a Weber function $h$, and it divides the order of
    $\mathcal{O}_{K,f}^\times/\mathcal{O}_{K,f,N}^\times$, where
    $\mathcal{O}_{K,f,N}^\times=\{u\in\mathcal{O}_{K,f}^\times: u\equiv 1\pmod{N\mathcal{O}_{K,f}}\}$.
\end{enumerate}
\end{theorem}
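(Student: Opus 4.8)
The plan is to recover the statement from the theory of complex multiplication, following the route of Lozano-Robledo \cite{alvarocm}. The starting point is that the order $\mathcal{O}_{K,f}$ embeds into $\operatorname{End}(E)$ and hence acts on the $\Z/N\Z$-module $E[N]$. Taking the standard $\Z$-basis $\{1,\tau\}$ of $\mathcal{O}_{K,f}$, where $\tau$ has trace $\phi$ and satisfies $\tau^2=\phi\tau+\delta$, and using the ordering $\{\tau,1\}$ to coordinatize $E[N]$, one checks that multiplication by $a+b\tau$ is represented by $\sm{a+b\phi & b \\ \delta b & a}$. Thus the image of $(\mathcal{O}_{K,f}/N\mathcal{O}_{K,f})^\times$ is exactly $\mathcal C_{\delta,\phi}(N)$, giving the isomorphism in part (1). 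The first task is to make this precise by exhibiting an $\mathcal{O}_{K,f}/N\mathcal{O}_{K,f}$-module structure on $E[N]$ in which the endomorphism action is literally given by these matrices; some care is needed at primes dividing the conductor $f$, where $E[N]$ need not be free of rank one over $\mathcal{O}_{K,f}/N\mathcal{O}_{K,f}$ and the multiplication map must be read off directly from the ring of endomorphisms.

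Next I would exploit the compatibility of the Galois action with complex multiplication. Over $K(j_{K,f})$, where the CM is defined, every $\sigma\in G_{K(j_{K,f})}$ commutes with the action of $\mathcal{O}_{K,f}$, so $\rho_{E,N}(\sigma)$ is an $\mathcal{O}_{K,f}/N\mathcal{O}_{K,f}$-module automorphism and therefore lies in $\mathcal C_{\delta,\phi}(N)$. An element of $G_{\Q(j_{K,f})}$ restricting to complex conjugation on $K$ instead satisfies $\sigma(\alpha\cdot P)=\bar\alpha\cdot\sigma(P)$, i.e.\ it acts semilinearly; writing $\alpha\mapsto\bar\alpha$ in the basis $\{\tau,1\}$ via $\bar\tau=\phi-\tau$ produces exactly the extra generator $c_\phi$. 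Hence $\rho_{E,N}(G_{\Q(j_{K,f})})\subseteq\mathcal N_{\delta,\phi}(N)$. That $[\mathcal N_{\delta,\phi}(N):\mathcal C_{\delta,\phi}(N)]=2$ is then immediate: matching $c_\phi$ against the Cartan shape forces $b=0$ and then the incompatible conditions $a=-1$ and $a=1$, so $c_\phi\notin\mathcal C_{\delta,\phi}(N)$ for $N\ge 3$; this is consistent with $[K(j_{K,f}):\Q(j_{K,f})]=2$, noting that $j_{K,f}\in\R$ so $K\not\subseteq\Q(j_{K,f})$.

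The real content, and the main obstacle, is part (2): determining how large the image actually is inside the Cartan. Because complex conjugation is genuinely realized by Galois, the index of the image in $\mathcal N_{\delta,\phi}(N)$ equals the index of $\rho_{E,N}(G_{K(j_{K,f})})$ inside $\mathcal C_{\delta,\phi}(N)\cong(\mathcal{O}_{K,f}/N\mathcal{O}_{K,f})^\times$. To compute this I would invoke the main theorem of complex multiplication, which describes the action of $G_{K(j_{K,f})}$ on the torsion of $E$ through the idelic Artin reciprocity map of $K$. Under this description the field $K(j_{K,f},h(E[N]))$ cut out by the values of a Weber function $h$ is the relevant ray class field, so that $\Gal(K(j_{K,f},E[N])/K(j_{K,f},h(E[N])))$ measures precisely which automorphisms of $E$ are realized Galois-theoretically. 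This last group is the cokernel of the inclusion $\mathcal{O}_{K,f,N}^\times\hookrightarrow\mathcal{O}_{K,f}^\times$, which yields both the claimed coincidence of orders and the divisibility of the index by $\#\!\left(\mathcal{O}_{K,f}^\times/\mathcal{O}_{K,f,N}^\times\right)$.

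I expect the first two paragraphs to be essentially formal: linear algebra over $\mathcal{O}_{K,f}/N\mathcal{O}_{K,f}$ together with the defining property that Galois commutes, up to conjugation, with the CM action. The crux is the third paragraph, where one genuinely needs explicit class field theory for imaginary quadratic fields---the behaviour of torsion under the reciprocity law and the identification of Weber function fields with ray class fields---to pin down the index rather than merely bound it. For the complete argument I would defer to Lozano-Robledo \cite{alvarocm}.
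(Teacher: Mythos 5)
This statement is imported verbatim (up to conjugating the Cartan subgroup) from Lozano-Robledo, and the paper offers no proof of it, so the only benchmark is the original source; your sketch follows exactly that source's route (CM action of $\mathcal O_{K,f}$ on $E[N]$ in the basis $(\tau,1)$, semilinearity of complex conjugation producing $c_\phi$, and the main theorem of complex multiplication for part (2)), and the linear-algebra verifications in your first two paragraphs are correct. The one genuine slip is in the third paragraph: $\Gal(K(j_{K,f},E[N])/K(j_{K,f},h(E[N])))$ is not the cokernel of $\mathcal O_{K,f,N}^\times\hookrightarrow\mathcal O_{K,f}^\times$ but only embeds into $\mathcal O_{K,f}^\times/\mathcal O_{K,f,N}^\times$ (the units act on $E[N]$ as automorphisms identified by the Weber function, and only some of them need be realized by Galois); if your identification were literal, the word ``divides'' in part (2) would be an equality, contradicting \Cref{alvaro2}, which produces curves with image all of $\mathcal N_{\delta,\phi}(N)$, i.e.\ index $1$, while $\#\bigl(\mathcal O_{K,f}^\times/\mathcal O_{K,f,N}^\times\bigr)=2$ already for $j\notin\{0,1728\}$ and $N\ge 3$.
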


\begin{remark}
    Note that the choices of the parameters $\delta$ and $\phi$ (and consequently, the group $\mathcal C_{\delta, \phi}$) in our statement of the theorem differs from the statement in the original paper by Lozano-Robledo. However, those are conjugate subgroups of $\GL_2(\Z/N\Z)$, so the claim still holds. In fact, our choice of parameters was used in the theorem that follows.
\end{remark}

\begin{theorem}\cite[Theorem 1.2.(1)]{alvarocm} \label{alvaro2}
    Using notation from \Cref{alvaro1}, for every $K$ and $f\geq 1$, and a fixed $N\geq 3$, there is an elliptic curve $E/\Q(j_{K,f})$ with CM by $\mathcal O_{K,f}$ such that the image of $\rho_{E,N}$ is precisely $\mathcal N_{\delta,\phi}(N)$.
\end{theorem}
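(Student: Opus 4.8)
Throughout set $L:=\Q(j_{K,f})$, so that $\rho_{E,N}$ is a representation of $G_L$; the plan is to produce the required curve as a twist of an arbitrary CM curve over $L$, using \Cref{alvaro1} to confine the image to $\mathcal N_{\delta,\phi}(N)$ and to control the only possible defect. First I would reduce the problem to realising the \emph{full Cartan} over $LK=K(j_{K,f})$. Since $[\Q(j_{K,f}):\Q]$ equals the class number $h$ of $\mathcal O_{K,f}$ while $[K(j_{K,f}):\Q]=2h$, we have $K\not\subseteq L$ and $[LK:L]=2$; consequently every $\sigma\in G_L\setminus G_{LK}$ restricts to complex conjugation on $K$ and, since complex conjugation conjugates the CM action nontrivially, is sent by $\rho_{E,N}$ into the nontrivial coset $\mathcal N_{\delta,\phi}(N)\setminus\mathcal C_{\delta,\phi}(N)$ (this is exactly the role of the generator $c_\phi$ in \Cref{alvaro1}). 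Hence $\rho_{E,N}(G_L)\cap\mathcal C_{\delta,\phi}(N)=\rho_{E,N}(G_{LK})$, and the image equals the full normaliser as soon as $\rho_{E,N}(G_{LK})=\mathcal C_{\delta,\phi}(N)$.

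Next I would fix any model $E_0/L$ with CM by $\mathcal O_{K,f}$ and apply \Cref{alvaro1}: under the identification $\mathcal C_{\delta,\phi}(N)\cong(\mathcal O_{K,f}/N\mathcal O_{K,f})^\times$, the Cartan image $H_0^{+}:=\rho_{E_0,N}(G_{LK})$ has index in $\mathcal C_{\delta,\phi}(N)$ dividing $|\mathcal O_{K,f}^\times/\mathcal O_{K,f,N}^\times|$. When $j_{K,f}\neq 0,1728$ we have $\mathcal O_{K,f}^\times=\{\pm1\}$, and since $N\ge 3$ this index divides $2$: either $H_0^{+}=\mathcal C_{\delta,\phi}(N)$, in which case $E_0$ already works, or $H_0^{+}$ is an index-two subgroup with $-I\notin H_0^{+}$.

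In the latter case I would fill the Cartan by a quadratic twist. Every curve over $L$ with $j$-invariant $j_{K,f}\neq 0,1728$ is a twist $E_0^{(d)}$, $d\in L^\times$, and twisting multiplies $\rho_{E_0,N}$ by the quadratic character $\chi_d$, whose values lie in the central subgroup $\{\pm I\}\subseteq\mathcal C_{\delta,\phi}(N)$. On $G_{LK}$ the twisted representation is $\sigma\mapsto\chi_d(\sigma)\rho_{E_0,N}(\sigma)$, and I claim its image is all of $\mathcal C_{\delta,\phi}(N)$ whenever $\sqrt d\notin LK(E_0[N])$: picking $\sigma\in G_{LK(E_0[N])}$ with $\chi_d(\sigma)=-1$ puts $-I$ into the image, and as the image already maps onto $H_0^{+}$ modulo $\{\pm I\}$ it must then equal $\langle H_0^{+},-I\rangle=\mathcal C_{\delta,\phi}(N)$. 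Such a $d$ is abundant, since only finitely many classes of $L^\times/(L^\times)^2$ become trivial in the finite extension $LK(E_0[N])$ while $L^\times/(L^\times)^2$ is infinite. The twist $E_0^{(d)}/L$ then satisfies $\rho_{E_0^{(d)},N}(G_{LK})=\mathcal C_{\delta,\phi}(N)$ and hence has image $\mathcal N_{\delta,\phi}(N)$.

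The main obstacle is the exceptional case $j_{K,f}\in\{0,1728\}$, that is $K=\Q(\zeta_3)$ or $K=\Q(i)$ with $f=1$, where $\mathcal O_{K,f}^\times$ has order $6$ or $4$ and the defect $\mathcal C_{\delta,\phi}(N)/H_0^{+}$ may be cyclic of order up to $6$. Here $L=\Q$ and $LK$ already contains the pertinent roots of unity, so the quadratic twists are replaced by sextic (resp.\ quartic) twists, whose twisting characters take values in the order-$6$ (resp.\ order-$4$) subgroup of $\mathcal C_{\delta,\phi}(N)$ attached to $\mathcal O_{K,f}^\times$. The same independence principle governs the argument: one must choose $d\in L^\times$ whose twisting character is nontrivial on $G_{LK(E_0[N])}$ and surjects onto the cyclic defect, thereby filling the Cartan, and the only genuinely delicate point is the bookkeeping with $\mu_6$ (resp.\ $\mu_4$) replacing $\{\pm1\}$; an infinitude-of-twists argument again yields a suitable $d$. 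In all cases the chosen twist achieves $\rho_{E,N}(G_{LK})=\mathcal C_{\delta,\phi}(N)$, hence $\rho_{E,N}(G_L)=\mathcal N_{\delta,\phi}(N)$, completing the proof.
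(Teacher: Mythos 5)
This statement is not proved in the paper at all: it is quoted verbatim from Lozano-Robledo \cite[Theorem 1.2.(1)]{alvarocm} and used as a black box in Section 5, so there is no internal proof to compare your argument against. Judged on its own, your strategy for the generic case $j_{K,f}\neq 0,1728$ is sound and is close in spirit to how such statements are actually established: confine the image to $\mathcal N_{\delta,\phi}(N)$ via \Cref{alvaro1}, observe that $G_{\Q(j_{K,f})}\setminus G_{K(j_{K,f})}$ must land in the nontrivial coset, bound the defect inside the Cartan by the unit group, and kill it by a quadratic twist. One step you assert without justification is that in the index-two case $-I\notin H_0^{+}$; this is not automatic from the index alone, and you need it, since if $-I\in H_0^{+}$ then quadratic twisting (which only multiplies by $\pm I$) can never enlarge the image. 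The fix is the CM-theoretic fact that $K(j_{K,f},h(E[N]))$ is independent of the chosen model, which forces $H_0^{+}\cdot\{\pm I\}=\mathcal C_{\delta,\phi}(N)$ and hence $-I\notin H_0^{+}$ whenever the index is $2$; you should say this explicitly.

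The genuine gap is the exceptional case $j_{K,f}\in\{0,1728\}$, which you only gesture at. There the twisting ``character'' $\sigma\mapsto\sigma(d^{1/6})/d^{1/6}$ (resp.\ $d^{1/4}$) is merely a cocycle with values in $\Aut(E)\cong\mu_6$ (resp.\ $\mu_4$), a homomorphism only after restriction to $G_{K(j_{K,f})}$, and its values are not central in $\GL_2(\Z/N\Z)$; so you must verify both that the twisted image of $G_{K(j_{K,f})}$ fills $\mathcal C_{\delta,\phi}(N)$ (which requires the image of $\mathcal O_{K}^\times$ in $\mathcal C_{\delta,\phi}(N)$ to generate the defect quotient, and a choice of $d$ with $x^6-d$ irreducible over $K(E_0[N])$) and that the twisted image of the outer coset still lies in $\mathcal N_{\delta,\phi}(N)\setminus\mathcal C_{\delta,\phi}(N)$, which is a cocycle computation you have not done. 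Since these are exactly the cases where the defect can have order up to $6$, they cannot be dismissed as bookkeeping; as written, the argument is complete only for $j_{K,f}\neq 0,1728$.
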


These two theorems tell us what the Galois image looks like in some basis. In order to apply results from Section 2, we must also know what the automorphisms of $E$ look like in this basis. From the proof of \cite[Theorem 1.1]{alvarocm}, it follows that the basis in which the image of the Galois representation is of the above format is the one corresponding to $(f\tau, 1)$, where $\tau=\frac{\sqrt{\Delta_K}}{2}$ or $\tau=\frac{1+\sqrt{\Delta_K}}{2}$, depending on $\Delta_K \pmod 4$. One has $\mathcal{O}_{K,f}=\Z+f\Z[\tau]$. It is now easy to deduce the action of automorphisms. Let $A$ denote the image of the action of $\Aut(E)$ in this basis.

\begin{itemize}
    \item $j=1728$. There are $4$ automorphisms, corresponding to multiplication by powers of $i$. One has $f=1$ and $\tau=i$. Moreover, the action of multiplication by $i$ in the basis $(i, 1)$ is given by the matrix $\sm{0 & 1 \\-1 & 0}$, so $A=\langle \sm{0 & 1 \\-1 & 0} \rangle $. 
    \item $j=0$. There are $6$ automorphisms, corresponding to multiplication by powers of $\frac{-1+\sqrt{-3}}{2}$. One has $f=1$ and $\tau=\frac{1+\sqrt{-3}}{2}$. Moreover, the action of multiplication by $\tau$ in the basis $(\tau, 1)$ is given by the matrix $\sm{1 & 1\\ -1 & 0}$.
    \item $j\not \in \{0, 1728\}$. One simply has $A=\{\pm I\}$.
\end{itemize}

Fix a rational CM $j$-invariant. One can now easily deduce an algorithm for the number of rational points $x$ on a modular curve $X_H$ such that $j(x)=j_0$.

Suppose that the input is the $j$-invariant with the corresponding quadratic order which has minimal discriminant $\Delta_K$ and conductor $f$, the level $N$ and a group $H \leq \GL_2(\Z/N\Z)$ which contains $-I$ (if it does not contain $-I$, we consider $\langle H, -I\rangle$ instead).

\begin{itemize}
    \item Calculate the conductor $f$ and the discriminant $\Delta_K$ of the corresponding quadratic order $\mathcal O_{K,f}.$ 
    \item If $\Delta_Kf^2\equiv 0 \pmod 4$, set $\delta=\Delta_Kf^2/4$ and $\phi=0$. Else, set $\delta=\frac{\Delta_K-1}{4}f^2$ and $\phi=f$.
    \item Let $$\mathcal C_{\delta, \phi}(N)=\left\{ \sm{a+b\phi & b \\ \delta b & a} : a,b \in \Z/N\Z, \ a^2+ab\phi-\delta b^2 \in (\Z/N\Z)^\times \right\}$$ and $$\mathcal N_{\delta, \phi}(N)=\left\langle \mathcal C_{\delta, \phi}, \sm{-1 & 0 \\ \phi & 1}\right\rangle.$$ 
    \item Then there exists an elliptic curve with the given $j$-invariant such that the mod $N$ Galois image $R$ is equal to $\mathcal N_{\delta, \phi}(N)$ in some basis, and the image of the action of automorphisms is $A$.
    \item If $j_0$ is $0$ or $1728$, calculate the number of double cosets $HgA$ with $gR \subseteq HgA$. 
    \item Otherwise, calculate the number of cosets $Hg$ with $gRg^{-1} \subseteq H$.
\end{itemize}

\begin{remark}
    In \cite{rousemayle}, Mayle and Rouse give an algorithm which determines all rational points on a modular curve which has a map to an elliptic curve of rank $0$. In one step of their calculations, they use an optimized version of our algorithm, which can be found on their GitHub repository \cite{rousegithub}. They optimize our code in several ways. One improvement is to use Sutherland's functions from the repository \cite{Sutherland2025Magma} to build the Cartan subgroups and their normalizers instead of building them directly.

    Another improvement is in the cases $j=0$ and $j=1728$. The condition that one needs to check is that for every double coset $HgA$, one has $gRg^{-1}\subseteq HgAg^{-1}$, where $R=\mathcal N_{\delta, \phi}(N)$ and $A$ is the group of automorphisms. Let $g$ be a double coset representative. Let $K=gRg^{-1}\cap H$. Let $g_1, \ldots, g_r$ be the right coset representatives of $K$ inside $gRg^{-1}$. One needs to check, for each $i$, whether $$Kg_i \subseteq HgAg^{-1}.$$
    Since $K \subseteq H$, this is equivalent to $g_i\in HgAg^{-1}$. Furthermore, note that $HgAg^{-1}$ is the union of two or three right cosets of $H$, so this condition is easy to check.
\end{remark}

\begin{remark}
The described algorithm works for rational CM points on modular curves. It is often of interest to determine $K$-rational points on $X_H$ above a fixed  $j$-invariant $j_0\in \Q$ for some larger number field $K$. In that case, $\rho_{E,N}(G_K)$ is a subgroup of $\mathcal N_{\delta, \phi}(N)$, and in general we do not know which subgroup it is. However, if $K$ is exactly the quadratic CM field of $j_0$, then one can choose $E$ with  $j(E)=j_0$ so that $\rho_{E,N}(G_K)$ equals $\mathcal C_{\delta, \phi}(N)$ (this follows from the proof of \Cref{alvaro1}) and one can apply the same algorithm, just replacing $\mathcal N_{\delta, \phi}(N)$ with $\mathcal C_{\delta, \phi}(N)$. This is often useful, since quadratic CM points on modular curves are often defined exactly over CM fields of rational elliptic curves.
\end{remark}

\begin{example}
    In \cite[Example 4.2]{filip-ja}, the number of quadratic points on $X_0(88)$ above $j$-invariants $-3375$ and $16581375$ is determined.  The corresponding imaginary quadratic orders are $\Z\left[\frac{1+\sqrt{-7}}{2}\right]$ and $\Z[\sqrt {-7}]$. In that article, the number of points was found using properties of isogenies of CM curves and the splitting behaviour of prime $2$ and its powers inside the quadratic orders. In the special case of $X_0(n)$, the point count can be done using these tricks, but for general modular curves $X_H$ this may not work.

    Using our code, we quickly find that there are $8$ points on $X_0(88)$ above each of the mentioned $j$-invariants over their CM field $\Q(\sqrt{-7})$. Magma's \texttt{time} function returns 0.440 and 0.660s of CPU time for each of the $j$-invariants. For comparison, the code used in \cite{filip-ja} as a sanity check, based on methods developed in \cite{nikola}, takes much longer. There, the method is to compute the $j$-map $X_0(88)\to \PP^1$ and then calculate the preimages.  
\end{example}

\bibliographystyle{siam}
\bibliography{lit}

@article{biluparent,
 author = {Bilu, Yuri and Parent, Pierre},
 title = {Serre's uniformity problem in the split {Cartan} case},
 fjournal = {Annals of Mathematics. Second Series},
 journal = {Ann. Math. (2)},
 issn = {0003-486X},
 volume = {173},
 number = {1},
 pages = {569--584},
 year = {2011},
 language = {English},
 doi = {10.4007/annals.2011.173.1.13},
 keywords = {11G15,11G05,11F80},
 zbMATH = {5960666},
 Zbl = {1278.11065}
}

@misc{fl,
 author = {Furio, Lorenzo and Lombardo, Davide},
 title = {Serre's uniformity question and proper subgroups of ${C_{ns}^+(p)}$},
 year = {2025},
 howpublished = {Preprint, {arXiv}:2305.17780 [math.{NT}] (2025)},
 keywords = {11G05,11F80,11G16},
 url = {https://arxiv.org/abs/2305.17780},
 arXiv = {arXiv:2305.17780}
}

@article{bpr,
 author = {Bilu, Yuri and Parent, Pierre and Rebolledo, Marusia},
 title = {Rational points on {{\(X_0^+(p^r)\)}}},
 fjournal = {Annales de l'Institut Fourier},
 journal = {Ann. Inst. Fourier},
 issn = {0373-0956},
 volume = {63},
 number = {3},
 pages = {957--984},
 year = {2013},
 language = {English},
 doi = {10.5802/aif.2781},
 keywords = {11G18,11G05,11G16},
 zbMATH = {6227477},
 Zbl = {1307.11075}
}

@article{openimage,
 author = {Serre, Jean-Pierre},
 title = {Galois properties of points of finite order of elliptic curves},
 fjournal = {Inventiones Mathematicae},
 journal = {Invent. Math.},
 issn = {0020-9910},
 volume = {15},
 pages = {259--331},
 year = {1972},
 language = {French},
 doi = {10.1007/BF01405086},
 keywords = {11G05,14H25,14G05},
 url = {https://eudml.org/doc/142133},
 zbMATH = {3372166},
 Zbl = {0235.14012}
}

@article{serrecheb,
 author = {Serre, Jean-Pierre},
 title = {Quelques applications du th{\'e}or{\`e}me de densit{\'e} de {Chebotarev}},
 fjournal = {Publications Math{\'e}matiques},
 journal = {Publ. Math., Inst. Hautes {\'E}tud. Sci.},
 issn = {0073-8301},
 volume = {54},
 pages = {123--201},
 year = {1981},
 language = {French},
 doi = {10.1007/BF02698692},
 keywords = {11R45,11R39,14H52,14G99,11S37,11F11,14K15},
 url = {https://eudml.org/doc/103977},
 zbMATH = {3781292},
 Zbl = {0496.12011}
}

@article{leastcmdeg,
 author = {Clark, Pete L. and Genao, Tyler and Pollack, Paul and Saia, Frederick},
 title = {The least degree of a {CM} point on a modular curve},
 fjournal = {Journal of the London Mathematical Society. Second Series},
 journal = {J. Lond. Math. Soc., II. Ser.},
 issn = {0024-6107},
 volume = {105},
 number = {2},
 pages = {825--883},
 year = {2022},
 language = {English},
 doi = {10.1112/jlms.12518},
 keywords = {11G15,11G18,11G30},
 zbMATH = {7731379},
 Zbl = {1537.11088}
}

@article{nikola,
 Author = {Ad{\v{z}}aga, Nikola and Keller, Timo and Michaud-Jacobs, Philippe and Najman, Filip and Ozman, Ekin and Vukorepa, Borna},
 Title = {Computing quadratic points on modular curves {{\(X_0(N)\)}}},
 FJournal = {Mathematics of Computation},
 Journal = {Math. Comput.},
 ISSN = {0025-5718},
 Volume = {93},
 Number = {347},
 Pages = {1371--1397},
 Year = {2024},
 Language = {English},
 DOI = {10.1090/mcom/3902},
 Keywords = {11G05,14G05,11G18},
 zbMATH = {7810337},
 Zbl = {1543.11051}
}

@article{alvarocm,
 author = {Lozano-Robledo, {\'A}lvaro},
 title = {Galois representations attached to elliptic curves with complex multiplication},
 fjournal = {Algebra \& Number Theory},
 journal = {Algebra Number Theory},
 issn = {1937-0652},
 volume = {16},
 number = {4},
 pages = {777--837},
 year = {2022},
 language = {English},
 doi = {10.2140/ant.2022.16.777},
 keywords = {14H52,11F80,11G05,11G15},
 zbMATH = {7572351},
 Zbl = {1504.14064}
}

@misc{novak:cm-modular-curves-github,
  author       = {Ivan Novak},
  title        = {number-of-cm-points-on-modular-curves},
  howpublished = {\url{https://github.com/inova3c/number-of-cm-points-on-modular-curves}},
  year         = {2026},
  note         = {GitHub repository, accessed \today},
}

@incollection{volcanoes,
 author = {Sutherland, Andrew V.},
 title = {Isogeny volcanoes},
 booktitle = {ANTS X. Proceedings of the tenth algorithmic number theory symposium, San Diego, CA, USA, July 9--13, 2012},
 isbn = {978-1-935107-00-2; 978-1-935107-01-9},
 pages = {507--530},
 year = {2013},
 publisher = {Berkeley, CA: Mathematical Sciences Publishers (MSP)},
 language = {English},
 keywords = {11G20,11Y16},
 zbMATH = {6488054},
 Zbl = {1345.11044}
}

@misc{Sutherland2025Magma,
  author = {Sutherland, Andrew V.},
  title = {{Magma: Utilities and extensions of various Magma intrinsics}},
  year = {2025},
  howpublished = {\url{https://github.com/AndrewVSutherland/Magma/}},
  note = {Accessed: 2026-02-26}
}

@misc{rousegithub,
  author = {Mayle, Jacob and Rouse, Jeremy},
  title = {{ModCrvToEC: Magma code for computing maps from a modular curve to an elliptic curve}},
  year = {2023},
  howpublished = {\url{https://github.com/rouseja/ModCrvToEC}},
  note = {Accessed: 2026-02-26}
}

@misc{rousemayle,
 author = {Mayle, Jacob and Rouse, Jeremy},
 title = {Rational points on modular curves via maps to elliptic curves with rank zero},
 year = {2026},
 howpublished = {Preprint, {arXiv}:2601.17202 [math.{NT}] (2026)},
 keywords = {11G18,11F80,11G05,14G05},
 url = {https://arxiv.org/abs/2601.17202},
 arXiv = {arXiv:2601.17202}
}

@article{Kenku,
 author = {Kenku, M. A.},
 title = {On the modular curves {{\(X_0(125)\)}}, {{\(X_1(25)\)}} and {{\(X_1(49)\)}}},
 fjournal = {Journal of the London Mathematical Society. Second Series},
 journal = {J. Lond. Math. Soc., II. Ser.},
 issn = {0024-6107},
 volume = {23},
 pages = {415--427},
 year = {1981},
 language = {English},
 doi = {10.1112/jlms/s2-23.3.415},
 keywords = {14H45,14G05,11F03,14H52},
 zbMATH = {3661490},
 Zbl = {0425.14006}
}

@Article{Mazur,
 Author = {Mazur, B.},
 Title = {Modular curves and the {Eisenstein} ideal},
 FJournal = {Publications Math{\'e}matiques},
 Journal = {Publ. Math., Inst. Hautes {\'E}tud. Sci.},
 ISSN = {0073-8301},
 Volume = {47},
 Pages = {33--186},
 Year = {1977},
 Language = {English},
 DOI = {10.1007/BF02684339},
 Keywords = {14G35,11G18,14G05,14H45,14H52,14H40,11F06},
 URL = {https://eudml.org/doc/103950},
 zbMATH = {3611460},
 Zbl = {0394.14008}
}

@misc{filip-ja,
 author = {Najman, Filip and Novak, Ivan},
 title = {Quadratic points on modular curves ${X_0}({N})$ for ${N}{{\leq}} 100$},
 year = {2025},
 howpublished = {Preprint, {arXiv}:2508.14480 [math.{NT}] (2025)},
 url = {https://arxiv.org/abs/2508.14480},
 arXiv = {arXiv:2508.14480}
}

@misc{kenji,
 author = {Terao, Kenji},
 title = {Degrees of points with rational $j$-invariant on ${X_{0}}({n})$ and ${X_{1}}({n})$},
 year = {2025},
 howpublished = {Preprint, {arXiv}:2507.13199 [math.{NT}] (2025)},
 keywords = {11G18,11G05},
 url = {https://arxiv.org/abs/2507.13199},
 arXiv = {arXiv:2507.13199}
}

@article{kenku65,
 author = {Kenku, M. A.},
 title = {The modular curves {{\(X_ 0(\)}}65) and {{\(X_ 0(\)}}91) and rational isogeny},
 fjournal = {Mathematical Proceedings of the Cambridge Philosophical Society},
 journal = {Math. Proc. Camb. Philos. Soc.},
 issn = {0305-0041},
 volume = {87},
 pages = {15--20},
 year = {1980},
 language = {English},
 doi = {10.1017/S0305004100056462},
 keywords = {14G05,14H45,14H52,14K05,11F03},
 zbMATH = {3752988},
 Zbl = {0479.14014}
}

@article{kenku169,
 author = {Kenku, M. A.},
 title = {The modular curve {{\(X_0\)}}(169) and rational isogeny},
 fjournal = {Journal of the London Mathematical Society. Second Series},
 journal = {J. Lond. Math. Soc., II. Ser.},
 issn = {0024-6107},
 volume = {22},
 pages = {239--244},
 year = {1980},
 language = {English},
 doi = {10.1112/jlms/s2-22.2.239},
 keywords = {14K05,14E05,14H45},
 zbMATH = {3681921},
 Zbl = {0437.14022}
}

@article{kenku39,
 author = {Kenku, M. A.},
 title = {The modular curve {{\(X_0(39)\)}} and rational isogeny},
 fjournal = {Mathematical Proceedings of the Cambridge Philosophical Society},
 journal = {Math. Proc. Camb. Philos. Soc.},
 issn = {0305-0041},
 volume = {85},
 pages = {21--23},
 year = {1979},
 language = {English},
 doi = {10.1017/S0305004100055444},
 keywords = {14H52,14H25,14H45,14G05,11R58},
 zbMATH = {3608194},
 Zbl = {0392.14011}
}

\end{document}